\newtheorem{theorem}{Theorem}
\newtheorem{definition}{Definition}
\DeclareMathOperator*{\argmin}{argmin}
\DeclareMathOperator*{\argmax}{argmax}
\newcommand\myPart[2]{^{#1}_{#2}}
\title{The Lawson-Hanson Algorithm with Deviation Maximization: \\ Finite Convergence and Sparse Recovery}
\author[1]{Monica Dessole}
\author[1]{Marco Dell'Orto}
\author[1]{Fabio Marcuzzi}
\affil[1]{Department of Mathematics “Tullio Levi Civita”, University of Padova, Via Trieste 63, 35121 Padova, Italy}
\date{August 2021}                     
\begin{document}

\maketitle

\begin{abstract}
In this work we devise a new version of the Lawson-Hanson with Deviation Maximization (LHDM) algorithm, a block method for the solution of NonNegative Least Squares (NNLS). 
It is shown that the new algorithm here presented convergences in a finite number of steps under suitable conditions. An extensive campaign of experiments is performed in order to evaluate the performance gain with respect to the standard Lawson-Hanson algorithm. We also explore the sparse recovery ability of LHDM, comparing it against several $\ell_1$-minimization solvers in terms of solution quality and time-to-solution on a large set of dense instances.
\end{abstract}

Keywords: 
  nonnegative least squares, sparse recovery, block pivoting

\medskip

AMS:  65F25, 65K99

\section{Introduction}

NonNegative Least Squares (NNLS) problems arise in many applications where data points can be represented as nonnegative linear combinations of some meaningful components. Such problems are frequently encountered in signal and image processing and they are core problems in more complex computations, such as nonnegative matrix and tensor decompositions, see e.g. \cite{Lee-Seung, NMFbookGillis}. Moreover, when dealing with underdetermined systems of equations, the nonnegativity constraint is known to naturally enhance sparsity of the solution, that is the solution attained has few nonzeros, see e.g. \cite{Elad2008_uniqueness_nonnegative, FKo14, WTa09, WXT11}. An important outcome of this body of work is that nonnegativity alone may attain a satisfactory sparse recovery. Moreover, NNLS solvers can be adopted to solve unconstrained least squares problems with minor adjustments. Over the last two decades, sparsity has become one of the most relevant topics in signal processing. In general, sparsity in signals describes the phenomenon where high dimensional data can be expressed with few measurements and it results in finding a sparse solution to underdetermined systems of equations. This problem is highly nonconvex and its solution is NP-hard in general, although it is well known that $\ell_1$-minimization leads to the sparsest solution for a restricted class of matrices. 
This fact is known as $\ell_0-\ell_1$ equivalence and it has been found empirically \cite{CSD01} and theoretically \cite{Donoho-Huo_2001, EBr02}. 

The first algorithm devised for NNLS is the Lawson-Hanson algorithm \cite{LH95}. 
Since this seminal work, many modifications have been proposed in order to improve the standard Lawson-Hanson algorithm:
\citet{BJo97} have proposed a variation specifically designed for use in nonnegative tensor decompositions; their algorithm, called ``fast NNLS'' (FNNLS), reduces the execution time by avoiding redundant computations in nonnegative matrix factorization problems arising in tensor decompositions and performs well with multiple right-hand-sides problems, which is not the case here discussed, thus we omit a comparison. \citet{VBK04} presented a different NNLS solution algorithm, namely ``fast combinatorial NNLS'' (FCNNLS), also designed for the specific case of a large number of right-hand sides. 
Principal block pivoting method introduced by \citet{PJV94} is an active set type algorithm which differs from the standard Lawson-Hanson algorithm, since the sequence of iterates produced does not necessarily fall into the feasible region. The convergence is ensured provided the objective function is strictly convex, while when we deal with underdetermined matrices it is simply convex, therefore this algorithm may fail in sparse recovering. Surprisingly, the Lawson-Hanson algorithm \cite{LH95} does not suffer from this drawback. 

The Lawson-Hanson algorithm is an active set method which iteratively builds an optimal solution by solving at each iteration an unconstrained least squares subproblem, e.g. by computing a QR decomposition, on a subset of columns of the initial matrix, in such a way that the objective value strictly decreases while the iterates are kept within the feasible region. 
The algorithm performs a column pivoting by selecting one column at a time and therefore it relies only on BLAS--2 operations, resulting in poor performance. In an actual implementation, the columns are not permuted, rather a vector of indices is used to keep trace of columns ordering. In general, column pivoting makes it more difficult to achieve high performances in QR computation, see \cite{BHa92,BQu98,BQu98b,QSB98,Bis89}.

In this work, we address the NNLS problem in which the objective function matrix is dense and we present a modified Lawson-Hanson algorithm in which multiple column selection is performed in order to exploit BLAS--3 operations for efficiency. 
The column selection is based on the ``deviation maximization" pivoting presented in \cite{DMa21} to obtain rank-revealing QR factorizations. The main difference is that the column selection here does not aim at revealing the rank of the matrix, but is driven from the active-set method to solve the non-negativity constrained problem. The proposed algorithm is named Lawson-Hanson with Deviation Maximization (LHDM). 
A first try of combining the two techniques was presented in \cite{DMV20DRNA}, where we observed good sparsity recovery capacity and a significant performance gain with respect to the standard Lawson-Hanson algorithm when dealing with underdetermined systems. However, in this preliminary version the algorithm may fall in an infinite loop without converging to a solution: indeed, it is known that manipulating multiple columns at once may allow cycling. In the present paper, we introduce a new mechanism which allows a similar performance gain, but the termination in a finite number of steps is ensured. Moreover, we explore its sparsity recovery ability against several $\ell_1$-minimization solvers, extending a comparison that has been done in the recent literature \cite{AAVV_Solving_Basis_Pursuit-Heuristic_Optimality_Check_and_Solver_Comparison}.

The rest of the paper is organized as follows. In Section \ref{sec:nnls}, we formally introduce the NNLS problem and the Lawson-Hanson algorithm. We present the deviation maximization algorithm for column selection and the new version of the Lawson-Hanson with Deviation Maximization algorithm. We provide a theoretical result about finite convergence of LHDM by generalizing the analogous result for the standard Lawson-Hanson. In Section \ref{sec:sparse_recovery}, we introduce the $\ell_0$-minimization problem as sparse recovery and the convex $\ell_1$-minimization problem. We recall results that ensure the so-called $\ell_0-\ell_1$ equivalence and conditions under which the underdetermined NNLS problem has a unique solution. Last, we recall how arbitrary signed sparse recovery can be attained by NNLS solvers. In Section \ref{sec:numerical_experiments}, we present a comparison of LHDM against several $\ell_1$-minimization solvers in terms of performance and solution found. The results are shown with an extensive campaign of experiments. Finally, Section \ref{sec:conclusions} concludes the paper.

\section{Active set algorithms for nonnegative least squares} \label{sec:nnls}
 
Consider the nonnegative least squares problem 
\begin{equation}
    \min_{\mathbf{x}} \Vert A \mathbf{x} -\mathbf{b} \Vert_2, \qquad \text{s.t. }{\mathbf{x}\geq 0}, \label{eq:NNLS}
\end{equation}
where $A = (\mathbf{a}_1\ \dots\ \mathbf{a}_n)$ is a matrix of size $m \times n$, $\mathbf{b}$ is a vector of length $m$. In this work, we address the case in which $A$ is a dense matrix, i.e. it is stored as a full array with $mn$ entries. The case in which $A$ is sparse would require the design of a specific implementation in order to exploit the presence of many null entries.
Trivially, a solution ${\mathbf{x}}^{\star}$ of \eqref{eq:NNLS} also solves
\begin{equation*}
    \min_{\mathbf{x}} \frac{1}{2} \Vert A \mathbf{x} -\mathbf{b} \Vert_2^2, \qquad \text{s.t. }{\mathbf{x}\geq 0}. \label{eq:NNLS_squared}
\end{equation*}
Define $\phi( \mathbf{x}) := \Vert A \mathbf{x} -\mathbf{b} \Vert_2^2$.
Note that the gradient of $\phi$ at a point $\mathbf{x}$, i.e. steepest descend direction with opposite sign, is 
\begin{equation*}
    \nabla \phi ( \mathbf{x}) = A^TA \mathbf{x} - A^T \mathbf{b} = -A^T( \mathbf{b} - A \mathbf{x} ) = -A^T \mathbf{r}( \mathbf{x}),
\end{equation*}
where $\mathbf{r}$ is the residual function $ \mathbf{r}( \mathbf{x}) = \mathbf{b} - A \mathbf{x}$. The set of points satisfying the constraints is called feasible region, 
here $\Omega = \left\{ \mathbf{x} \in \mathbb{R}^n :\  \mathbf{x}\geq 0 \right\}$, and a point $\mathbf{x}$ is said to be feasible if it belongs to the feasible region. Let us recall the following basic definition.

\begin{definition}
Let $\mathbf{x}\in\mathbb{R}^n$ be a feasible point, i.e. $\mathbf{x}\geq 0$. A vector $\mathbf{s}\in\mathbb{R}^n$ is a \textbf{feasible direction} at  $\mathbf{x}$ if there exists $\varepsilon > 0$ such that
\begin{equation*}
    \mathbf{x} + \varepsilon \mathbf{s} \geq 0.
\end{equation*}
\end{definition}
Notice that if $\mathbf{s}$ is a feasible direction at $\mathbf{x}$ and $s_i<0$, then we must have $x_i>0$.
The following theorem characterizes the solutions of the convex problem \eqref{eq:NNLS}. For a proof see e.g. \cite{LH95}.
\begin{theorem}{(Karush-Kuhn-Tucker conditions for NNLS)} \label{thm:KKT_NNLS}
A point $\mathbf{x}^{\star} \in \mathbb{R}^n$ is a solution of problem \eqref{eq:NNLS} if and only if there exists  $\mathbf{w}^{\star} \in \mathbb{R}^n$ and a partition $Z^{\star} \cup P^{\star} = \left\{1,\dots,n \right\}$ such that
\begin{align}
    &\mathbf{w}^{\star} = A^T( \mathbf{b} - A \mathbf{x}^{\star} ),\\
    &x^{\star}_i = 0, \ i \in Z^{\star}, \quad x^{\star}_i > 0, \ i \in P^{\star}, \label{eq:KKT_1} \\
    &w^{\star}_i \leq 0, \ i \in Z^{\star}, \quad w^{\star}_i = 0, \ i \in P^{\star}. \label{eq:KKT_2}
\end{align}
\end{theorem}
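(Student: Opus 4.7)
The plan is to establish the equivalence as a standard instance of the Karush-Kuhn-Tucker conditions for a convex quadratic program with nonnegativity constraints. Since $\phi(\mathbf{x}) = \Vert A\mathbf{x} - \mathbf{b}\Vert_2^2$ is convex (though not necessarily strictly convex) and the feasible set $\Omega$ is a convex cone, the first-order conditions will be both necessary and sufficient, and I would handle the two implications separately. The bridge to the statement is the observation already made in the excerpt that $\nabla \phi(\mathbf{x}) = -A^T\mathbf{r}(\mathbf{x}) = -\mathbf{w}$, so the dual vector $\mathbf{w}^{\star}$ is (up to sign) just the gradient at the optimum.

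For the necessity direction ($\Rightarrow$), I would start from an optimal $\mathbf{x}^{\star}$ and define the partition through $P^{\star} = \{i : x_i^{\star} > 0\}$ and $Z^{\star} = \{i : x_i^{\star} = 0\}$, then set $\mathbf{w}^{\star} := A^T(\mathbf{b} - A\mathbf{x}^{\star})$. For each $i \in P^{\star}$, both $+\mathbf{e}_i$ and $-\mathbf{e}_i$ are feasible directions by the remark following the definition of feasible direction (since $x_i^{\star} > 0$ leaves room for a negative perturbation), so the directional derivative $\partial_i \phi(\mathbf{x}^{\star}) = -w_i^{\star}$ must vanish, giving $w_i^{\star} = 0$. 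For each $i \in Z^{\star}$ only $+\mathbf{e}_i$ is feasible, and optimality forces $\partial_i \phi(\mathbf{x}^{\star}) \geq 0$, i.e.\ $w_i^{\star} \leq 0$.

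For the sufficiency direction ($\Leftarrow$), I would invoke the first-order convexity inequality. For any feasible $\mathbf{y} \geq 0$,
\begin{equation*}
\phi(\mathbf{y}) \geq \phi(\mathbf{x}^{\star}) + \nabla\phi(\mathbf{x}^{\star})^T(\mathbf{y} - \mathbf{x}^{\star}) = \phi(\mathbf{x}^{\star}) - (\mathbf{w}^{\star})^T(\mathbf{y} - \mathbf{x}^{\star}).
\end{equation*}
Splitting the inner product along the partition, the $P^{\star}$-block vanishes because $w_i^{\star} = 0$ there, while the $Z^{\star}$-block reduces to $\sum_{i \in Z^{\star}} w_i^{\star} y_i \leq 0$ since $w_i^{\star} \leq 0$ and $y_i \geq 0$. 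Combined with $x_i^{\star} = 0$ on $Z^{\star}$, this yields $(\mathbf{w}^{\star})^T(\mathbf{y} - \mathbf{x}^{\star}) \leq 0$ and hence $\phi(\mathbf{y}) \geq \phi(\mathbf{x}^{\star})$.

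I do not expect a serious obstacle here: each implication collapses to a one-line argument once the partition and the sign of $\mathbf{w}^{\star}$ are organized correctly. The only point worth flagging is that strict convexity is not needed for sufficiency, because the active-set structure encoded in \eqref{eq:KKT_1}--\eqref{eq:KKT_2} already forces the linear correction term to be nonpositive on the feasible cone; consequently the theorem gives a characterization of \emph{every} optimizer, not just of a unique one, which is exactly the regime relevant to the underdetermined setting addressed later in the paper.
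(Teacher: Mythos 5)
Your proposal is correct. Note that the paper itself gives no proof of this theorem — it simply defers to Lawson--Hanson \cite{LH95} — so there is no in-paper argument to compare against; your two-directional argument (necessity via the sign of the directional derivative $-w_i^{\star}$ along the feasible coordinate directions $\pm\mathbf{e}_i$, sufficiency via the first-order convexity inequality split along the partition $P^{\star}\cup Z^{\star}$) is the standard proof and is essentially what the cited reference contains. Your closing remark that strict convexity is not needed, so the characterization applies to every minimizer in the underdetermined case, is accurate and worth keeping.
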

Notice that equations (\ref{eq:KKT_1}-\ref{eq:KKT_2}) imply
\begin{equation}
    x^{\star}_i w^{\star}_i = 0,\ i=1,\dots,n,
\end{equation}
which are the well known complementarity conditions. The points $\mathbf{x}$ and $\mathbf{w}$ are referred as primal and dual variables, respectively. Notice that the primal and dual variables are related as follows $\mathbf{w} = A^T\mathbf{r} = -\nabla \phi ( \mathbf{x})$, i.e. the dual vector $\mathbf{w}$ is the opposite of the gradient direction, i.e. the steepest descend direction, at $\mathbf{x}$.
 
Active set methods are in the family of descend algorithms, that is they look for the solution by decreasing the objective function value, in particular at each iteration the new solution is a feasible point in a feasible descend direction with respect to the current solution. 
Recall that the $i$-th constraint is said to be active if it holds with equality, here $x_i=0$, it is said to be passive when it holds with strict inequality, here $x_i>0$, otherwise it is violated, namely $x_i<0$.

\subsection{The Lawson-Hanson algorithm}

The first algorithm to solve \eqref{eq:NNLS} is due to Lawson and Hanson \cite{LH95}, and it is presented in Algorithm \ref{alg:LH}, in an efficient formulation that uses Householder and Givens orthogonal transformations, which are then detailed here below and in sec. \ref{sed:modifiedQR}. It is an active set method and a particular case of the algorithm introduced in \cite{Sto71} for the least squares problem with linear inequality constraints.
\begin{algorithm}[ht]
\footnotesize
\caption{LH$(A, \mathbf{b})$}
\label{alg:LH}
\begin{algorithmic}[1]
\State $ {P}_0 = \emptyset$, $ {Z}_0=\left\{ 1,\dots,M \right\}$, $\mathbf{x}\myPart{(0)}{} = 0$, $\mathbf{w}\myPart{(0)}{} = A^T \mathbf{b}$, $s=0$
\While{$ {Z}_s \neq \emptyset$ and $\max(\mathbf{w}\myPart{(s)}{Z})>0$}  \Comment{outer loop}
\State $j_s \in \argmax_i {w}\myPart{(s)}{i} $ \label{alg:pivoting_step}
\State set $ {P}_{s+1} =  {P}_{s} \cup \left\{ j_s \right\}$, $ {Z}_{s+1} =  {Z}_{s} \setminus \left\{ j_s \right\}$
\If{$s=0$}
\State {set $R^{(1)}$ as the Householder reflector related to $\mathbf{a}_{j_s}$ and compute $\mathbf{b}^{(1)}$}
\Else
\State {use rank-1 modification to update $R^{(s+1)}$ and update $\mathbf{b}^{(s+1)}$}
\EndIf
\State compute $\mathbf{y}\myPart{(s+1)}{P}$ as the solution of $R\myPart{(s+1)}{} \mathbf{y} = \mathbf{b}^{(s+1)}$, set $\mathbf{y}\myPart{(s+1)}{Z} = 0$ \label{alg:LH_step_update_LS}
\While{$\min\left( \mathbf{y}\myPart{(s+1)}{P} \right) \leq 0$}  \Comment{inner loop}
\State $s = s+1$
\State $\alpha = \min \left\{ {x\myPart{(s-1)}{i}}\left({x\myPart{(s-1)}{i}-y\myPart{(s)}{i}}\right)^{-1}:\ i \in P_s,\ y\myPart{(s)}{i}\leq 0 \right\}$
\State $\mathbf{x}\myPart{(s)}{} = \mathbf{x}\myPart{(s-1)}{} + \alpha \left( \mathbf{y}\myPart{(s)}{}- \mathbf{x}\myPart{(s-1)}{}\right)$
\State $ {Q}=  \left\{i\ :\ x\myPart{(s)}{i}\leq 0 \right\}$
\State $ {P}_{s+1} =  {P}_s \setminus Q$ 
\State $ {Z}_{s+1} =  {Z}_s \cup Q $ 
\State $ R^{(s+1)} = R^{(s)}$
\For{$j \in Q$}
\State delete column corresponding to index $j$ from $R^{(s+1)}$
\State reduce $R^{(s+1)}$ to triangular form by Givens transformation and update $\mathbf{b}^{(s+1)}$
\EndFor 
\State compute $\mathbf{y}\myPart{(s+1)}{P}$ as the solution of $R\myPart{(s+1)}{} \mathbf{y} = \mathbf{b}^{(s+1)}$, set $\mathbf{y}\myPart{(s+1)}{Z} = 0$ \label{alg:LH_step_downdate_LS}
\EndWhile
\State $\mathbf{x}\myPart{(s+1)}{}=\mathbf{y}\myPart{(s+1)}{}$ \label{alg:LH_step_new_x}
\State $\mathbf{w}\myPart{(s+1)}{Z} = \left(A\myPart{(s+1)}{Z}\right)^T\mathbf{b}$, $\mathbf{w}\myPart{(s+1)}{P} = 0$
\State $s = s+1$
\EndWhile \\
\Return $ {P}^{\star} =  {P}_s$, $ {Z}^{\star} =  {Z}_s$, $\mathbf{x}^{\star} = \mathbf{x}\myPart{(s)}{}$
\end{algorithmic}
\end{algorithm}

Let us now describe the main features of the Lawson-Hanson algorithm.
Let $ {P}_s$ denote the passive set and $ {Z}_s$ denote the active set at $k$-th algorithmic step, with cardinality respectively $n_s$ and $n-n_s$. Let us define the following submatrices  
\begin{align*}
    A\myPart{(s)}{P} &= \left(\mathbf{a}_{i_1} \dots \mathbf{a}_{i_{n_s}} \right), && i_1, \dots i_{n_s} \in  {P}_s, \\
    {A}\myPart{(s)}{Z} &= \left(\mathbf{a}_{j_1} \dots \mathbf{a}_{j_{n-n_s}} \right), && j_1, \dots j_{n-n_s} \in  {Z}_s. \\
\end{align*}
We define the $k$-th iterate as $\mathbf{x}\myPart{(s)}{}=\left(x\myPart{(s)}{1}, \dots, x\myPart{(s)}{n} \right)^T \in \mathbb{R}^{n}$. With an analogous notation to the one introduced above for matrices, we have
\begin{align*}
    \mathbf{x}\myPart{(s)}{P} &= \left(x\myPart{(s)}{i_1}, \dots, x\myPart{(s)}{i_{n_s}} \right)^T \in \mathbb{R}^{n_s}, && i_1, \dots, i_{n_s} \in  {P}_s, \\
    {\mathbf{x}}\myPart{(s)}{Z} &= \left(x\myPart{(s)}{j_1}, \dots, x\myPart{(s)}{j_{n-n_s}} \right)^T \in \mathbb{R}^{n-n_s}, && j_1, \dots, j_{n-n_s} \in  {Z}_s.\\
\end{align*}
Since the iterates produced by Lawson-Hanson algorithm do not leave the feasible region, we always have ${\mathbf{x}}\myPart{(s)}{Z} = 0$. The vector $\mathbf{y}\myPart{(s)}{P}$ is the least squares solution of the following unconstrained subproblem
\begin{equation}
    \min_{\mathbf{y}} \left\Vert A\myPart{(s)}{P} \mathbf{y}-\mathbf{b}\right\Vert, \label{eq:LH_LS_subproblem}
\end{equation}
and $\mathbf{y}\myPart{(s)}{Z}=0$. Notice that these vectors change size at each iteration.

With the notation introduced, we have
\begin{align*}
    \mathbf{r}\myPart{(s)}{} &=  \mathbf{b} - A \mathbf{x}\myPart{(s)}{} = \mathbf{b} -  A\myPart{(s)}{P}  \mathbf{x}\myPart{(s)}{P}, \\
    \mathbf{w}\myPart{(s)}{} &= A^T \left( \mathbf{b} -A \mathbf{x}\myPart{(s)}{}  \right)= A^T \left( \mathbf{b} - A\myPart{(s)}{P}  \mathbf{x}\myPart{(s)}{P} \right)= A^T \mathbf{r}\myPart{(s)}{},\\
    \mathbf{w}\myPart{(s)}{Z} &=  \left(A\myPart{(s)}{Z}\right)^T \left( \mathbf{b} - A\myPart{(s)}{P}  \mathbf{x}\myPart{(s)}{P} \right) =  \left(A\myPart{(s)}{Z}\right)^T  \mathbf{r}\myPart{(s)}{}, \\
    \mathbf{w}\myPart{(s)}{P} &=  \left(A\myPart{(s)}{P}\right)^T \left( \mathbf{b} - A\myPart{(s)}{P} \mathbf{x}\myPart{(s)}{P}  \right) = 0.
\end{align*}
where the last identity is a consequence of normal equations for \eqref{eq:LH_LS_subproblem}, since at the end of the outer loop we have $\mathbf{x}\myPart{(s)}{P} = \mathbf{y}\myPart{(s)}{P}$.
Clearly we have $\mathbf{w}\myPart{(s)}{} \cdot \mathbf{x}\myPart{(s)}{} = 0$ for every $k$. As shown in \cite{LH95}, Algorithm \ref{alg:LH} terminates in a finite number of steps and $\mathbf{x}\myPart{(s)}{}, \mathbf{w}\myPart{(s)}{}, P_s, Z_s$ satisfy KKT conditions (\ref{eq:KKT_1}-\ref{eq:KKT_2}) on termination.

Since the columns of $A\myPart{(s)}{P}$ are linearly independent, the vector $\mathbf{y}\myPart{(s)}{P}$ is the unique solution of the linear system $R\myPart{(s)}{} \mathbf{y}\myPart{(s)}{P} = \mathbf{b}\myPart{(s)}{}$, where $R\myPart{(s)}{}$ is the square upper triangular matrix of order $n_s = |P_s|$ in the QR decomposition of $A\myPart{(s)}{P}$, that is 
\begin{equation*}
    A\myPart{(s)}{P} = Q\myPart{(s)}{}\left(\begin{array}{c}
         R\myPart{(s)}{} \\
         0
    \end{array}\right).
\end{equation*}
Here the passive set $P_s$ is an ordered set (the order in which the column indices have been inserted) and therefore  $P_s$ also imposes the column ordering in QR computation. Similarly, the updated right-hand side is given by
\begin{equation*}
    \mathbf{b}\myPart{(s)}{} = \left(Q\myPart{(s)}{}\right)^T\mathbf{b}.
\end{equation*}
With respect to the original matrix $A$, we obtain the following partial QR decomposition with column pivoting
\begin{equation*}
    \left( Q\myPart{(s)}{} \right)^T A\ \Pi\myPart{(s)}{} = \left(\begin{array}{cc}
         R\myPart{(s)}{} & B\myPart{(s)}{} \\
         0 & C\myPart{(s)}{}
    \end{array}\right),
\end{equation*}
where $\Pi\myPart{(s)}{}$ is the permutation matrix that permuted the elements in $P_s$ in the leftmost positions. In an actual implementation \cite{LH95}, the columns are not physically permuted, rather a permutation vector is used to keep track of the column ordering. Here, $B\myPart{(s)}{}$ has size $n_s \times (n-n_s)$ and $C\myPart{(s)}{}$ has size $(m-n_s) \times (n-n_s)$.

Let us spend the rest of this section by giving some details about the implementation of Algorithm \ref{alg:LH}.
Notice that the matrix $R\myPart{(s)}{}$ is never computed from scratch, instead it is modified by means of rank-$1$ updates or downdated by means of Givens rotations.
Let us briefly detail these strategies. At the beginning of the outer loop, the new passive set is incremented of exactly one index, therefore $n_{s+1} = |P_{s+1}| = n_{s}+1$. Then, the corresponding Householder vector is computed and applied to the right-hand side and to the trailing matrix, i.e. 
\begin{equation}
    \left(I- \mathbf{v}_s^T\mathbf{v}_s\right)C\myPart{(s)}{} = \left( \begin{array}{cc}
        r_{n_s+1} & b_{n_s+1,2}\ \dots\ b_{n_s+1,n-n_s}  \\
        0 & C\myPart{(s+1)}{}
    \end{array}\right),
\end{equation}
where we supposed that the selected column was in the leading position. The new block $C\myPart{(s+1)}{}$ has size $(m-n_s-1) \times (n-n_s-1)$. The partial factorization is updated as follows
\begin{equation*}
    R\myPart{(s+1)}{} = \left( \begin{array}{cc}
        R\myPart{(s)}{} & \mathbf{b}_1^{(s)} \\
         & r_{n_s+1}
    \end{array}\right), \quad
    B\myPart{(s+1)}{} = \left( \begin{array}{ccc}
        \mathbf{b}_2^{(s)} & \dots & \mathbf{b}_{n-n_s}^{(s)} \\
         b_{n_s+1,2} & \dots & b_{n_s+1,n-n_s}
    \end{array}\right).
\end{equation*}
On the other hand, when a column is removed from the passive set, the R factor is almost an Hessenberg matrix, i.e. it has some nonzero entries in its first lower diagonal, and it has to be reduced to triangular form. This can be done by computing a sequence of Givens rotations, which have to be applied to the whole matrix $A$ and to the right-hand side. In this way, the orthogonal factor $Q\myPart{(s)}{}$ is never computed nor stored. We detail the modified QR decomposition procedure in Section \ref{sed:modifiedQR}.

\subsection{The Lawson-Hanson algorithm with Deviation Maximization}

In this section we propose a new active set method in which multiple columns are manipulated at a time and for which finite termination can be established in exact arithmetic. Before coming to the description of the main algorithm, let us discuss a strategy, for columns selection, based upon the deviation maximization \cite{DMa21}.

Recall that for an $m\times k$ matrix $C = (\mathbf{c}_{1}\ \dots \ \mathbf{c}_{k})$ whose columns $\mathbf{c}_{j}$ are non-null, the \textit{correlation matrix} $\Theta$ has entries
\begin{equation}
    \theta_{ij} = \frac{\mathbf{c}_i^T \mathbf{c}_j}{\Vert \mathbf{c}_{i} \Vert\Vert \mathbf{c}_{j} \Vert}, \quad 1 \leq i,j \leq k.
    \label{eq:def_cosine_matrix}
\end{equation}
In particular, we have $\Theta = \left( C D^{-1} \right)^T C D^{-1} = D^{-1} C^T C D^{-1}$, where $D$ is the diagonal matrix with entries $d_{i} = \Vert \mathbf{c}_{i} \Vert\ ,\ 1\leq i \leq k$. It is immediate to see that $\Theta$ is symmetric positive semidefinite, it has only ones on the diagonal, and its entries range from $-1$ to $1$. 
Notice that $\theta_{ij}$ is the cosine of $\alpha_{ij} = \alpha(\mathbf{c}_i,\mathbf{c}_j) \in [0,\pi)$, the angle (modulo $\pi$) between $\mathbf{c}_i$ and $\mathbf{c}_j$. 
In order to emphasize this geometric interpretation, from now on we refer to $\Theta$ as the \textit{cosine matrix}. 

Algorithm \ref{alg:DM} presents the deviation maximization algorithm for column selection. Here, the procedure for computing the candidate set $I$ has been adapted to fit the Lawson-Hanson algorithm and it is slightly different from the one presented in \cite{DMa21}.

\begin{algorithm}[h]
\footnotesize
\caption{Deviation Maximization: $[J]$=DM$(A, \mathbf{u}_1, \mathbf{u}_2, \tau_1, \tau_2, \delta, k_{\max})$}
\label{alg:DM}
\begin{algorithmic}[1]
\State ${J} = \left\{j\ :\ j \in \argmax \mathbf{u}_1 \right\}$ \label{alg:DM_step_initializeJ}
\State ${I} = \left\{i\ :\ \mathbf{u}_{l}(i) \geq \tau_l \max \mathbf{u}_l,\ i\neq j, l=1,2 \right\} $ \label{alg:DM_step_candidate}
\If {$|I|>k_{\max}$}
\State $I= \left\{ i_s \in I \ :\ k=1,\dots,k_{\max} \right\}$   
\EndIf 
\State compute the cosine matrix $\Theta$ associated to $[A]_{:,{I}}$ 
\For{$i \in {I}$}
\If{$|\theta_{i,j}| < \delta, \forall j \in {J}$}
\State ${J} = {J} \cup \left\{ i \right\}$
\EndIf
\EndFor \\
\Return $J$
\end{algorithmic}
\end{algorithm}

The resulting procedure, namely the Lawson-Hanson algorithm with Deviation Maximization (LHDM), is presented in Algorithm \ref{alg:LHDM}, and it terminates in a finite number of steps as shown in Theorem \ref{thm:main_LHDM_thm} below.

\begin{theorem} \label{thm:sigma_min_of_R}
Let $A\myPart{(s)}{P}$ be a full column rank matrix, and let $R^{(s+1)}$ be the R factor of the QR decomposition of $A\myPart{(s+1)}{P} = \left(A\myPart{(s)}{P}\ A\myPart{(s)}{J}\right)$, where $A\myPart{(s)}{J}$ is the submatrix of $A$ that is obtained by taking columns with indices in the set ${J_s}$, which is chosen by the deviation maximization with $0 < \tau_1, \tau_2 < 1$, $k_{\max} > 1$, and
\begin{equation}
    \delta \leq \frac{{\tau}_2}{k_{\max}-1}. \label{eq:delta_static}
\end{equation}
Then
\begin{equation}
    \bar{\sigma}^{(s+1)} \geq \sigma_{n_{s+1}}(A)  \frac{\bar{\sigma}^{(s)}}{\sigma_1(A)} \frac{1}{\sqrt{2(n-n_{s+1})n_{s+1}}}  \frac{\tau_2 \sqrt{1-\tau_2}}{k_{s}^2 n_s},
\end{equation}
where $k_{s}=|J_s|$ and $\bar{\sigma}^{(s+1)}$ is the smallest singular value of $R^{(s+1)}$, which is in particular nonsingular.
\end{theorem}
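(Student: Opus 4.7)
The objective is to lower-bound $\bar\sigma^{(s+1)}=\sigma_{\min}(R^{(s+1)})=\sigma_{\min}(A^{(s+1)}_P)$. The plan splits into three ingredients: (i) a block-triangular inverse bound reducing the question to $\bar\sigma^{(s)}$ and the smallest singular value $\sigma_{\min}(\tilde R)$ of the new diagonal block; (ii) a Gershgorin estimate on the cosine matrix that turns the DM selection rule into a lower bound on $\sigma_{\min}(\tilde R)$; and (iii) a Poincar\'e/Schur-complement argument that introduces $\sigma_{n_{s+1}}(A)$.

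For (i), the QR update on $A^{(s+1)}_P=(A^{(s)}_P\ A^{(s)}_J)$ applies $(Q^{(s)})^T$ and then triangulates the trailing $(m-n_s)\times k_s$ block $C^{(s)}_J$ by Householder reflectors, producing
$$R^{(s+1)}=\begin{pmatrix} R^{(s)} & B \\ 0 & \tilde R \end{pmatrix},$$
with $\tilde R$ the R-factor of $C^{(s)}_J$. Inverting this block and using $\|\cdot\|_2\leq\|\cdot\|_F$ leads to
$$\|(R^{(s+1)})^{-1}\|_F^2=\|(R^{(s)})^{-1}\|_F^2+\|\tilde R^{-1}\|_F^2+\|(R^{(s)})^{-1}B\tilde R^{-1}\|_F^2,$$
which I would bound using $\|(R^{(s)})^{-1}\|_F\leq\sqrt{n_s}/\bar\sigma^{(s)}$, $\|\tilde R^{-1}\|_F\leq\sqrt{k_s}/\sigma_{\min}(\tilde R)$, and $\|B\|_2\leq\sigma_1(A)$ (since $B$ is a sub-block of $(Q^{(s)})^T A$). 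Distributing the Frobenius/spectral conversions yields an estimate of the shape $\bar\sigma^{(s+1)}\geq \bar\sigma^{(s)}\sigma_{\min}(\tilde R)/(\sigma_1(A)\cdot\mathrm{poly}(n_s,k_s,n_{s+1},n-n_{s+1}))$, whose polynomial prefactor is designed to match $k_s^2 n_s\sqrt{2(n-n_{s+1})n_{s+1}}$ in the statement once those conversions are allocated correctly.

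For (ii), let $D=\diag(\|c_j\|)_{j\in J_s}$ and $\Theta=D^{-1}(C^{(s)}_J)^T C^{(s)}_J D^{-1}$. The DM selection guarantees $|\Theta_{ij}|<\delta$ off-diagonally, so Gershgorin gives $\lambda_{\min}(\Theta)\geq 1-(k_s-1)\delta\geq 1-\tau_2$, invoking $\delta\leq\tau_2/(k_{\max}-1)$ and $k_s\leq k_{\max}$; hence $\sigma_{\min}(\tilde R)=\sigma_{\min}(C^{(s)}_J)\geq\sqrt{1-\tau_2}\min_{j\in J_s}\|c_j\|$. Coupling with the candidate-set inequality $u_2(j)\geq\tau_2\max_k u_2(k)$, read off with $u_2(i)=\|c_i\|$, gives $\sigma_{\min}(\tilde R)\geq\tau_2\sqrt{1-\tau_2}\max_{k\in Z_s}\|c_k\|$ --- the origin of the factor $\tau_2\sqrt{1-\tau_2}$. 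For (iii), the Schur complement of $(R^{(s)})^T R^{(s)}$ in $((Q^{(s)})^T A)^T (Q^{(s)})^T A$ is exactly $(C^{(s)})^T C^{(s)}$, so Poincar\'e's separation theorem yields $\sigma_i(C^{(s)})\geq\sigma_{n_s+i}(A)$ for $i=1,\dots,n-n_s$; summing the top $k_s$ squared singular values gives $\|C^{(s)}\|_F^2\geq k_s\,\sigma_{n_{s+1}}(A)^2$, hence $\max_k\|c_k\|\geq \sigma_{n_{s+1}}(A)\sqrt{k_s/(n-n_s)}$. Substituting back into (ii) and (i) produces the stated inequality, and strict positivity of every factor (guaranteed by the hypotheses, with $\sigma_{n_{s+1}}(A)>0$ required for the bound to be informative) certifies that $R^{(s+1)}$ is nonsingular.

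The step I expect to require the most care is the dimensional bookkeeping in (i): the exact denominator $k_s^2 n_s\sqrt{2(n-n_{s+1})n_{s+1}}$ is sensitive to which Frobenius vs.\ spectral norm is used for each of the three blocks, and especially for the cross term $-(R^{(s)})^{-1}B\tilde R^{-1}$, which admits inequivalent bounds via $\|\cdot\|_F\|\cdot\|_2\|\cdot\|_2$ versus an all-Frobenius decomposition. Everything else --- Gershgorin with the $\delta$-hypothesis, the candidate-set column-norm inequality, and the Schur-complement identity for the trailing block --- is essentially forced by the algorithm's structure and should go through cleanly.
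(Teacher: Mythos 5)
The paper does not actually prove this theorem: its entire proof is the single line ``Apply Theorem 2 of \cite{DMa21}'', so your reconstruction cannot be checked line-by-line against anything in this document. That said, your architecture --- the block-triangular inverse formula for $R^{(s+1)}$, Gershgorin on the cosine matrix of the selected trailing columns to exploit $\delta\le\tau_2/(k_{\max}-1)$, and Schur-complement/Cauchy interlacing (namely $\lambda_i(C^TC)\ge\lambda_{n_s+i}(A^TA)$, since $C^TC$ is the Schur complement of $R^TR$ in $\Pi^TA^TA\Pi$) to bring in $\sigma_{n_{s+1}}(A)$ --- is exactly the mechanism that produces a bound of the stated shape, and each of those three ingredients is individually correct. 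A rough completion of your step (i) gives $1/(\bar\sigma^{(s+1)})^2\le n_s/(\bar\sigma^{(s)})^2+\bigl(k_s/\sigma_{\min}(\tilde R)^2\bigr)\bigl(1+\sigma_1(A)^2/(\bar\sigma^{(s)})^2\bigr)$, which after inserting (ii)--(iii) yields a bound at least as strong as the one claimed, so the unfinished ``bookkeeping'' you flag is probably not fatal; but as written the prefactor $k_s^2n_s\sqrt{2(n-n_{s+1})n_{s+1}}$ is asserted to be reachable rather than derived, and that allocation must actually be carried out (or a possibly stronger constant explicitly obtained and compared) for the proof to be complete.

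The one substantive gap is in step (ii). In this paper's adapted deviation maximization (Algorithm 2), the set $J$ is seeded with $j_0\in\argmax\mathbf{u}_1=\argmax\mathbf{w}^{(s)}$, and that seed index is explicitly excluded from the candidate set $I$, hence is \emph{not} required to satisfy the column-norm threshold $u_2(i)\ge\tau_2\max\mathbf{u}_2$. Your inequality $\min_{j\in J_s}\Vert c_j\Vert\ge\tau_2\max_k\Vert c_k\Vert$ can therefore fail for $j=j_0$: the column with the largest dual value $w_j$ may have a very small residual norm $\Vert c_{j_0}\Vert$, in which case $\sigma_{\min}(\tilde R)\ge\tau_2\sqrt{1-\tau_2}\max_k\Vert c_k\Vert$ does not follow. (In \cite{DMa21} this issue is absent because there $\mathbf{u}_1=\mathbf{u}_2$ are both the vector of column norms, so the seed automatically attains the maximum norm.) This is arguably as much a defect of the paper's own one-line appeal to the cited theorem as of your argument, but your proof inherits it and needs either an additional hypothesis on the seed column (e.g.\ $\Vert c_{j_0}\Vert\ge\tau_2\max_k\Vert c_k\Vert$) or a separate treatment of that column in the Gershgorin step.
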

\begin{proof}
Apply Theorem 2 of \cite{DMa21} to conclude.
\end{proof}

It is known, it has been showed in \cite{DMa21}, that even if the matrix $A\myPart{(s)}{P}$ is well conditioned, there could be a potentially dramatic increase in the condition number of  $A\myPart{(s+1)}{P}$ when using the deviation maximization strategy, as well as for the other column selection strategy like the standard column pivoting. A famous example is the Kahan matrix, however it is a very unlikely occurrence in practice.

The choice \eqref{eq:delta_static} is not the only one possible: in fact, 
the values of $\delta$ and $\tau_2$ can be set independently, as shown in \cite{DMa21}.

\begin{algorithm}[h!]
\footnotesize
\caption{LHDM$(A, \mathbf{b}, \tau_1, \tau_2, \tau_c, k_{\max})$}
\label{alg:LHDM}
\begin{algorithmic}[1]
\State $ {P}_0 = \emptyset$, $ {Z}_0=\left\{ 1,\dots,M \right\}$, $\mathbf{x}\myPart{(0)}{} = 0$, $\mathbf{w}\myPart{(0)}{} = A^T \mathbf{b}$, $\mathbf{y}\myPart{(0)}{} = 0$, $s=0$
\State $R^{(0)} = 0$, $B^{(0)} = 0$, $C^{(0)} = A$
\State initialize $\mathbf{u}^{(0)}$ as the vector of column norms of $A$ 
\While{$ {Z}_s \neq \emptyset$ and $\max\left(\mathbf{w}\myPart{(s)}{}\right)>0$} \Comment{outer loop}
\State $[J_s]$=DM$(C^{(s)}, \mathbf{w}^{(s)}, \mathbf{u}^{(s)}, \tau_1, \tau_2, \delta, k_{\max})$
\State $ {P}_{s+1} =  {P}_{s} \cup J_{s} $
\State $ {Z}_{s+1} =  {Z}_{s} \setminus J_{s} $
\If{$s=0$}
\State{ set $R^{(1)}$ as the R factor of the QR decomposition of $A\myPart{(1)}{J}$ and compute $\mathbf{b}^{(1)}$, $B^{(1)}$ and $C^{(1)}$}
\Else
\State { use rank-$k$ update to get $R^{(s+1)}$, $B^{(s+1)}$, $C^{(s+1)}$, and update $\mathbf{b}^{(s+1)}$}
\EndIf
\State compute $\mathbf{y}\myPart{(s+1)}{P}$ as the solution of $R\myPart{(s+1)}{} \mathbf{y} = \mathbf{b}^{(s+1)}$, set $\mathbf{y}\myPart{(s+1)}{Z} = 0$ \label{alg:LHDM_step_update_LS}
\While{$\min\left( \mathbf{y}\myPart{(s+1)}{J} \right) \leq 0$}  \Comment{ensure a feasible descend direction}
\State delete the last element from $J_s$
\State $ {P}_{s+1} =  {P}_{s} \cup J_{s} $
\State $ {Z}_{s+1} =  {Z}_{s} \setminus J_{s} $
\State delete the last column of  $R\myPart{(s+1)}{}$ and update $\mathbf{b}^{(s+1)}$, $B^{(s+1)}$, $C^{(s+1)}$,
\State compute $\mathbf{y}\myPart{(s+1)}{P}$ as the solution of $R\myPart{(s+1)}{} \mathbf{y} = \mathbf{b}^{(s+1)}$, set $\mathbf{y}\myPart{(s+1)}{Z} = 0$
\label{alg:LHDM_step_update_LS_new}
\EndWhile
\While{$\min\left( \mathbf{y}\myPart{(s+1)}{P} \right) \leq 0$} \Comment{inner loop}
\State $s = s+1$
\State $\alpha = \min \left\{ {x\myPart{(s-1)}{i}}\left({x\myPart{(s-1)}{i}-y\myPart{(s)}{i}}\right)^{-1}:\ i \in P_s,\ y\myPart{(s)}{i}\leq 0 \right\}$
\State $\mathbf{x}\myPart{(s)}{} = \mathbf{x}\myPart{(s-1)}{} + \alpha \left( \mathbf{y}\myPart{(s)}{}- \mathbf{x}\myPart{(s-1)}{}\right)$
\State $ {Q}=  \left\{i\ :\ x\myPart{(s)}{i}\leq 0 \right\}$
\State $ {P}_{s+1} =  {P}_s \setminus Q$ 
\State $ {Z}_{s+1} =  {Z}_s \cup Q $ 
\State $ R^{(s+1)} = R^{(s)}$
\For{$j \in Q$}
\State delete column corresponding to index $j$ from $R^{(s+1)}$
\State reduce $R^{(s+1)}$ to triangular form by Givens transformation and update $\mathbf{b}^{(s+1)}$
\EndFor 
\State compute $\mathbf{y}\myPart{(s+1)}{P}$ as the solution of $R\myPart{(s+1)}{} \mathbf{y} = \mathbf{b}^{(s+1)}$, set $\mathbf{y}\myPart{(s+1)}{Z} = 0$ \label{alg:LHDM_step_downdate_LS}
\EndWhile
\State $\mathbf{x}\myPart{(s+1)}{}=\mathbf{y}\myPart{(s+1)}{}$ \label{alg:LHDM_step_new_x}
\State $\mathbf{w}\myPart{(s+1)}{} = A^T\left(\mathbf{b}-A\mathbf{x}\myPart{(s+1)}{}\right)$
\State update $\mathbf{u}^{(s)}$ as the vector of column norms of $C^{(s+1)}$
\State $s = s+1$
\EndWhile \\
\Return $ {P}^{\star} =  {P}_s$, $ {Z}^{\star} =  {Z}_s$, $\mathbf{x}^{\star} = \mathbf{x}\myPart{(s)}{}$
\end{algorithmic}
\end{algorithm}

Let us now prove the main result of this work. 

\begin{theorem} \label{thm:main_LHDM_thm} Consider the NNLS problem
\begin{equation*}
    \min \Vert A \mathbf{x} -\mathbf{b} \Vert^2_2, \qquad \text{s.t. } {\mathbf{x}\geq 0}.
\end{equation*}
$k_{\max} > 1$, Algorithm \ref{alg:LHDM} terminates in a finite number of steps. 
On termination, $\mathbf{x}$ will be a solution vector and $\mathbf{w}$ will be a dual solution vector.
\end{theorem}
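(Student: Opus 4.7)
The plan is to adapt the classical Lawson--Hanson finite-termination argument to the two new features of Algorithm \ref{alg:LHDM}: block column insertion via Deviation Maximization, and the new middle ``ensure feasible descent'' trimming loop. Finite termination will then follow from showing that (i) each of the three nested loops halts, (ii) each complete outer iteration strictly decreases $\phi(\mathbf{x})=\Vert A\mathbf{x}-\mathbf{b}\Vert_2^2$, and (iii) since the least-squares subproblem on each passive set has a unique solution (by the full-rank guarantee of Theorem \ref{thm:sigma_min_of_R}), no passive set can be revisited, so finiteness of the power set of $\{1,\dots,n\}$ forces the outer loop to stop.

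The genuinely new piece is the middle trimming loop, for which I would show it always exits with $|J_s|\geq 1$. By step \ref{alg:DM_step_initializeJ} of Algorithm \ref{alg:DM}, $J_s$ is initialized to contain some $j^\star\in\argmax\mathbf{w}\myPart{(s)}{}$, and LIFO deletion of the last element keeps $j^\star$ in $J_s$ throughout; in the worst case $J_s$ shrinks to $\{j^\star\}$, at which point a routine projection computation gives $y_{j^\star}=w^{(s)}_{j^\star}/\Vert\Pi\mathbf{a}_{j^\star}\Vert^2>0$, where $\Pi$ is the orthogonal projector onto $\operatorname{range}(A\myPart{(s)}{P})^\perp$ and $w^{(s)}_{j^\star}>0$ by the outer-loop guard. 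Once the trimming loop exits, $\mathbf{y}\myPart{(s+1)}{J}>0$ and Theorem \ref{thm:sigma_min_of_R} makes $\mathbf{y}\myPart{(s+1)}{}$ the unique minimizer of $\phi$ on its coordinate subspace, strictly better than the extension of $\mathbf{x}\myPart{(s)}{}$ by zeros (whose gradient has $j^\star$-component $-w^{(s)}_{j^\star}\neq 0$), so $\phi(\mathbf{y}\myPart{(s+1)}{})<\phi(\mathbf{x}\myPart{(s)}{})$. The inner loop is then the classical Lawson--Hanson one: each iteration produces a feasible convex-combination iterate, strictly decreases $|P|$ by at least one, and strictly decreases $\phi$, so it halts in at most $|P_{s+1}|$ steps.

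Outer-loop termination then follows from uniqueness plus finiteness, and upon termination the guards $\max\mathbf{w}\myPart{(s)}{Z}\leq 0$ (or $Z_s=\emptyset$), the normal-equation identity $\mathbf{w}\myPart{(s)}{P}=0$, and the preserved invariants $\mathbf{x}\myPart{(s)}{Z}=0$, $\mathbf{x}\myPart{(s)}{P}>0$ directly match the KKT conditions (\ref{eq:KKT_1})--(\ref{eq:KKT_2}), making $\mathbf{x}\myPart{(s)}{}$ a primal and $\mathbf{w}\myPart{(s)}{}$ a dual solution. The main obstacle will be verifying that LIFO deletion in the trimming loop really preserves a maximum-gradient index throughout: this is exactly what rules out the cycling the preliminary version in \cite{DMV20DRNA} suffered from, and it forces the invocation of Theorem \ref{thm:sigma_min_of_R} at every successful outer step to keep $A\myPart{(s)}{P}$ of full column rank and the LS solution unique.
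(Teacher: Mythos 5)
Your proposal is correct and follows essentially the same route as the paper: show the trimming loop always retains at least the $\argmax$ index of $\mathbf{w}^{(s)}$ (whose least-squares coefficient is positive by the Lemma 23.17--type projection argument), deduce that $\mathbf{y}^{(s+1)}-\mathbf{x}^{(s)}$ is a feasible strict-descent direction, bound the inner loop, and conclude finiteness from strict decrease of $\phi$ plus uniqueness of the least-squares solution on each passive set (via Theorem \ref{thm:sigma_min_of_R}), so that no passive set among the $2^n$ possibilities recurs. The only cosmetic differences are that you establish strict descent via the unique-minimizer-versus-noncritical-point argument where the paper computes the directional derivative $\varphi'(0)=\sum_{i\in J}y^{(s)}_i(-w^{(s-1)}_i)<0$ explicitly, and you rederive the positivity $y_{j^\star}=w^{(s)}_{j^\star}/\Vert\Pi\mathbf{a}_{j^\star}\Vert^2>0$ rather than citing Lemma 23.17 of \cite{LH95}.
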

\begin{proof}
In order to ensure finite convergence of Algorithm \ref{alg:LHDM}, we have to ensure the following
\begin{enumerate}
    \item termination of the outer loop, by showing that $\mathbf{s}\myPart{(s+1)}{}= \mathbf{y}\myPart{(s+1)}{} - \mathbf{x}\myPart{(s)}{}$ is a feasible descent direction;
    \item termination of the inner loop with a nonempty passive set. 
\end{enumerate}
Let us address the first task. Theorem \ref{thm:sigma_min_of_R} ensures that $A\myPart{(s+1)}{P}= \left(A\myPart{(s)}{P}\ A\myPart{(s)}{J}\right)$ 
has numerically linearly independent columns. 
Let us claim the following statement: if $\mathbf{y}\myPart{(s+1)}{J}>0$, then $\mathbf{s}\myPart{(s+1)}{}= \mathbf{y}\myPart{(s+1)}{} - \mathbf{x}\myPart{(s)}{}$ is a feasible descend direction at $\mathbf{x}\myPart{(s)}{}$. Then, if $\mathbf{y}\myPart{(s+1)}{j}\leq0$ for some $j \in J_s$, the last column of $A\myPart{(s)}{J}$ is dropped and the corresponding index $j_{k_s}$ is moved back to the active set $Z_{s+1}$. We then look for a new value of $\mathbf{y}\myPart{(s+1)}{}$ such that 
\begin{equation*}
    R\myPart{(s+1)}{1} \mathbf{y}\myPart{(s+1)}{P} = \mathbf{b}\myPart{(s+1)}{1},
\end{equation*}
where $R\myPart{(s+1)}{1}$ is equal to $R\myPart{(s+1)}{}$ where the last column is missing, and the right hand side $\mathbf{b}\myPart{(s+1)}{1}$ has been properly updated, e.g. by applying the Givens rotation that zeros out the last diagonal element of $R\myPart{(s+1)}{}$, corresponding to the deleted column. We continue dropping a column index at a time from ${J_s}$ until $\mathbf{y}\myPart{(s+1)}{j}>0$ for all indices $j$ left in $J_s$. It should be noticed that this procedure terminates at most when only the first column of $A\myPart{(s)}{J}$ is left, that is the same column index that would be selected by the standard Lawson-Hanson algorithm, i.e. it would guarantee the positivity of the inserted component.

For what concerns the second task, the inner loop terminates each time in at most $n_{s+1}-1$ steps. Set $s=s+1$ and notice that the first time the inner loop is entered, we have $\mathbf{y}\myPart{(s)}{J}>0$, implying $\mathbf{x}\myPart{(s)}{J} = \alpha \mathbf{y}\myPart{(s)}{J} >0$ and thus $J_s$ and $Q$ do not intersect. Therefore, indices in $J_s$ are not removed before computing $\mathbf{y}\myPart{(s+1)}{}$. Moreover, at least the first index $j_1$ of $J_s$ is left, because of Lemma 23.17 in \cite{LH95}, and thus the inner loop must terminate at most in $n_{s+1}-1$ step with a nonempty passive set $P_{s+1} \supseteq \{ j_1 \}$.

Let us now prove the claim, i.e. if $\mathbf{y}\myPart{(s+1)}{J}>0$, then $\mathbf{s}\myPart{(s+1)}{}= \mathbf{y}\myPart{(s+1)}{} - \mathbf{x}\myPart{(s)}{}$ is a feasible descend direction at $\mathbf{x}\myPart{(s)}{}$.
Consider the least squares solution $\mathbf{y}$ computed at step \ref{alg:LHDM_step_update_LS} of Algorithm \ref{alg:LH} with deviation maximization pivoting, that is $\mathbf{y}$ solves
\begin{equation}
     \mathbf{y}_P = \argmin_{\mathbf{y}}  \left\Vert A\myPart{(s)}{P} \mathbf{y}-\mathbf{b}\right\Vert \quad \mathbf{y}_Z=0. 
     \label{eq:min_LQ_b}
\end{equation}
where $Z = Z_{s}$ and $P = P_{s}$. The additional inner loop ensures that $y\myPart{(s)}{j}>0$ for $j \in J_s$ on termination. This, together with $\mathbf{x}\myPart{(s-1)}{}\geq0$, means that $\mathbf{s}\myPart{(s)}{} := \mathbf{y}\myPart{(s)}{} -\mathbf{x}\myPart{(s-1)}{}$ is a feasible direction. Let us now prove that $\mathbf{s}\myPart{(s)}{}$ is also a descend direction.
Said $\mathbf{r}\myPart{(s-1)}{} = \mathbf{b} - A \mathbf{x}\myPart{(s-1)}{}$, let us define $\hat{\mathbf{z}}$ as follows
\begin{equation}
        \hat{\mathbf{z}} = \argmin_{\mathbf{z}_Z=0} \| A \mathbf{z}-\mathbf{r}\myPart{(s-1)}{}\| = \argmin_{\mathbf{z}_Z=0}  \|A (\mathbf{x}\myPart{(s-1)}{} + \mathbf{z})-\mathbf{b}\|, \label{eq:min_LQ_r}
\end{equation}
where $Z = Z_{s}$ and $P = P_{s}$. Notice that $\hat{\mathbf{z}}$ can be obtained as
\begin{equation}
     \hat{\mathbf{z}}_P = \argmin_{\mathbf{z}}  \|A\myPart{(s)}{P} \mathbf{z}-\mathbf{r}\myPart{(s-1)}{}\|, \quad \hat{\mathbf{z}}_Z=0.
\end{equation}
Then we have
\begin{equation}
\begin{aligned}
    \|A \hat{\mathbf{z}}-\mathbf{r}\myPart{(s-1)}{}\| &= \|A  \hat{\mathbf{z}} -(\mathbf{b} - A\mathbf{x}\myPart{(s-1)}{})\| = \|A  (\hat{\mathbf{z}} + \mathbf{x}\myPart{(s-1)}{} ) - \mathbf{b} \| \\
    & \geq  \|A \mathbf{y}\myPart{(s)}{}-\mathbf{b}\|,
\end{aligned}    
\end{equation}
since $ \hat{\mathbf{z}} + \mathbf{x}\myPart{(s-1)}{} $ vanishes on $Z_{s}$ and the minimum is reached at  $\mathbf{y}\myPart{(s)}{}$. On the other hand
\begin{equation}
\begin{aligned}
     \|A \mathbf{y}\myPart{(s)}{}-\mathbf{b}\| &=  \|A \mathbf{y}\myPart{(s)}{} - A\mathbf{x}\myPart{(s-1)}{} -\mathbf{b} + A\mathbf{x}\myPart{(s-1)}{}\| =  
    \|A (\mathbf{y}\myPart{(s)}{} - \mathbf{x}\myPart{(s-1)}{}) -\mathbf{r}\myPart{(s-1)}{}\| \\
    &\geq \|A \hat{\mathbf{z}}-\mathbf{r}\myPart{(s-1)}{}\|,
\end{aligned}    
 \label{eq:chain_ineq}
\end{equation}
since $ \mathbf{y}\myPart{(s)}{} - \mathbf{x}\myPart{(s-1)}{}$ vanishes on $Z_{s}$ and the minimum is reached at $\hat{\mathbf{z}}$. Therefore 
\begin{equation*}
     \|A \hat{\mathbf{z}}-\mathbf{r}\myPart{(s-1)}{}\| = \|A \mathbf{y}\myPart{(s)}{}-\mathbf{b}\|,
\end{equation*}
and the inequality chain \eqref{eq:chain_ineq} collapses in an equality chain.
As a consequence of the uniqueness of minimizers of \eqref{eq:min_LQ_b} and \eqref{eq:min_LQ_r}, we have 
\begin{equation}
    \hat{\mathbf{z}} = \mathbf{s}\myPart{(s)}{} = \mathbf{y}\myPart{(s)}{} -\mathbf{x}\myPart{(s-1)}{}. \label{eq:z_equal_s}
\end{equation}
Consider the function
\begin{equation*}
    \varphi(\varepsilon) = \frac{1}{2} \Vert A (\mathbf{x}\myPart{(s-1)}{} + \varepsilon \mathbf{s}\myPart{(s)}{}) - \mathbf{b} \Vert^2.
\end{equation*}
We have 
\begin{equation}
  \begin{aligned}
\varphi'(0) &=  (\mathbf{s}\myPart{(s)}{})^T A^T (A\mathbf{x}\myPart{(s-1)}{} - \mathbf{b}) = -(\mathbf{s}\myPart{(s)}{})^T \mathbf{w}\myPart{(s-1)}{} = -(\mathbf{y}\myPart{(s)}{})^T \mathbf{w}\myPart{(s-1)}{} \\
& = \sum_{i \in J}y\myPart{(s)}{i}(-w\myPart{(s-1)}{i}) < 0, \label{eq:strict_ineq}    
\end{aligned}  
\end{equation}
since $w\myPart{(s-1)}{i}>0$ for all ${i \in J}$ and $y\myPart{(s)}{i} > 0$ for all ${i \in J}$.
Therefore there exists $\varepsilon>0$ such that
\begin{equation*}
    \Vert A (\mathbf{x}\myPart{(s-1)}{} + \varepsilon \mathbf{s}\myPart{(s)}{}) - \mathbf{b} \Vert^2 < \Vert A \mathbf{x}\myPart{(s-1)}{} - \mathbf{b} \Vert^2 = \phi (\mathbf{x}\myPart{(s-1)}{}).
 \end{equation*}
By definition \eqref{eq:min_LQ_r} and relation \eqref{eq:z_equal_s}, we have
\begin{equation}
    \|  A (\mathbf{x}\myPart{(s-1)}{} + \mathbf{s}\myPart{(s)}{}) - \mathbf{b} \|^2 \leq  \Vert A (\mathbf{x}\myPart{(s-1)}{} + \varepsilon \mathbf{s}\myPart{(s)}{}) - \mathbf{b} \Vert^2 < \Vert A \mathbf{x}\myPart{(s-1)}{} - \mathbf{b} \Vert^2. \label{eq:non_opt}
\end{equation}
Thus, we take $ \varepsilon =1$ (i.e. $\mathbf{x}\myPart{(s)}{} = \mathbf{y}\myPart{(s)}{}$), if $\mathbf{y}\myPart{(s)}{}$ is feasible, otherwise the algorithm enters the inner loop and we choose a smaller step length $\alpha$, more precisely the largest possible to keep the new iterate within the feasible region. It's easy to check that
\begin{align*}
    \Vert A (\mathbf{x}\myPart{(s-1)}{} + \alpha \mathbf{s}\myPart{(s)}{}) - \mathbf{b} \Vert^2 &= \min_{\varepsilon} \left\{ \Vert A (\mathbf{x}\myPart{(s-1)}{} + \varepsilon \mathbf{s}\myPart{(s)}{}) - \mathbf{b} \Vert^2:\ \mathbf{x}\myPart{(s-1)}{} + \varepsilon \mathbf{s}\myPart{(s)}{} \geq 0 \right\} \\
    & \leq \Vert A \mathbf{x}\myPart{(s-1)}{} - \mathbf{b} \Vert^2.
\end{align*}
We deduce that the objective function is always non increasing. Equation \eqref{eq:non_opt} also implies that every time step \ref{alg:LHDM_step_new_x} is reached, we have a strictly smaller value of the objective function. Hence at every iteration of the outer loop, the current objective value is strictly smaller than the preceding one.
Since the solution solves a least squares subproblem like those of steps \ref{alg:LHDM_step_update_LS} and \ref{alg:LHDM_step_downdate_LS}, the corresponding passive set must be different from the preceding one. As the number of possible combinations of active and passive sets is finite, namely it is equal to $2^n$, the algorithm must terminate in at most $2^n$ outer loop repetitions. 
\end{proof}
The proposed strategy may look naive. In order to give an idea about the reasons why this should work well, let us prove the following.
\begin{theorem}
If $\tau_1 = 1$ and $\tau_2\leq 1/2$, then in the outer loop we have $\mathbf{y}\myPart{(s)}{J}>0$ for all $s>0$.
\end{theorem}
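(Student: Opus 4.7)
My first step will be to identify the set $J_s$ under the hypothesis $\tau_1 = 1$. In Algorithm \ref{alg:DM}, the set $J$ is initialized to $\argmax \mathbf{u}_1 = \argmax \mathbf{w}^{(s)}$, and the candidate pool $I$ consists of indices $i \notin J$ satisfying both $w_i \geq \tau_1 \max \mathbf{w}^{(s)}$ and $u_2(i) \geq \tau_2 \max \mathbf{u}_2$. With $\tau_1 = 1$, the first inequality forces $w_i = \max \mathbf{w}^{(s)}$, but all such indices already belong to the initial $J$, so $I = \emptyset$ and no further insertions are made. Consequently $J_s = \argmax \mathbf{w}^{(s)}$ and every entry of $\mathbf{w}^{(s)}_{J_s}$ equals the common positive value $w_{\max} > 0$; that is, $\mathbf{w}^{(s)}_{J_s} = w_{\max}\mathbf{1}$.

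Second, I will derive a closed form for $\mathbf{y}^{(s+1)}_{J_s}$ via the Schur complement of the block normal equations for the augmented least-squares subproblem on $P_s \cup J_s$. By Theorem \ref{thm:sigma_min_of_R}, the augmented matrix has full column rank, and eliminating $\mathbf{y}_P$ yields
\[
S\,\mathbf{y}^{(s+1)}_{J_s} \;=\; A_{J_s}^T \mathbf{r}^{(s)} \;=\; \mathbf{w}^{(s)}_{J_s} \;=\; w_{\max}\mathbf{1},
\]
where $S := A_{J_s}^T Q_{P_s} A_{J_s}$ and $Q_{P_s} = I - A_{P_s}(A_{P_s}^T A_{P_s})^{-1} A_{P_s}^T$ is the orthogonal projector onto $\operatorname{range}(A_{P_s})^\perp$; the matrix $S$ is symmetric positive definite. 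Positivity of $\mathbf{y}^{(s+1)}_{J_s}$ thus reduces to $S^{-1}\mathbf{1} > 0$ componentwise.

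Third, I rescale $S$ by its diagonal: $S = D(I+E)D$, where $D$ is the diagonal matrix with entries $\|Q_{P_s}\mathbf{a}_j\|$ for $j \in J_s$, and $E$ is symmetric with zero diagonal. The entries $|E_{ij}| = |\theta_{ij}|$ coincide with the off-diagonal entries of the cosine matrix of the columns of $C^{(s)}$ (which are the projected columns $Q_{P_s}\mathbf{a}_j$ in the coordinates of $\operatorname{range}(A_{P_s})^\perp$), and are bounded by $\delta$ by construction of Algorithm \ref{alg:DM}. Combining $\delta \leq \tau_2/(k_{\max}-1)$ with $k_s \leq k_{\max}$ and $\tau_2 \leq 1/2$ gives $\|E\|_\infty \leq (k_s-1)\delta \leq \tau_2 \leq 1/2$, so $(I+E)^{-1}$ admits a convergent Neumann series. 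Since $\mathbf{y}^{(s+1)}_{J_s} = w_{\max}\,D^{-1}(I+E)^{-1} D^{-1}\mathbf{1}$ and $D^{-1}$ is positive diagonal, positivity reduces to $(I+E)^{-1}\mathbf{p} > 0$ componentwise with $\mathbf{p} := D^{-1}\mathbf{1} > 0$.

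The main obstacle is precisely this componentwise positivity. The expected route writes $(I+E)^{-1}\mathbf{p} - \mathbf{p} = -(I+E)^{-1} E \mathbf{p}$ and controls the two factors separately: $|(E\mathbf{p})_i| \leq \delta(k_s-1)\|\mathbf{p}\|_\infty \leq \tau_2\|\mathbf{p}\|_\infty$ from the row structure of $E$, together with $\|(I+E)^{-1}\|_\infty \leq 1/(1-\|E\|_\infty) \leq 2$, so the perturbation is of order $2\tau_2\|\mathbf{p}\|_\infty$, which is bounded by $\|\mathbf{p}\|_\infty$ under $\tau_2 \leq 1/2$. In the generic situation the argmax is unique, so $|J_s| = 1$, $E = 0$, and $\mathbf{y}^{(s+1)}_{J_s} = w_{\max}/\|Q_{P_s}\mathbf{a}_{j^\star}\|^2 > 0$ is immediate, recovering the content of Lemma 23.17 of \cite{LH95}; the role of the strict condition $\tau_2 \leq 1/2$ is to propagate this positivity across the multi-index tied case by keeping the perturbation of $\mathbf{p}$ strictly controlled relative to its entries, so that the sign of $\mathbf{p}$ is preserved componentwise.
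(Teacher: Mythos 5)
Your setup coincides with the paper's: with $\tau_1=1$ every selected dual component equals $\omega=\max\mathbf{w}^{(s)}>0$, the new components solve $S\,\mathbf{y}_{J}=\omega\mathbf{1}$ with $S=C^TC=A_{J}^TQ_{P}A_{J}$ the Schur complement (the paper reaches the same identity by extending Lemma 23.17 of Lawson--Hanson), and the deviation--maximization bound $\Vert E\Vert_\infty\le(k_s-1)\delta\le\tau_2\le 1/2$ on the off-diagonal part of the cosine matrix $\Theta=D^{-1}SD^{-1}=I+E$ is exactly the ``$\tau_2$-scaled diagonal dominance'' the paper invokes. The gap is in your final step. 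From $\mathbf{y}_{J}=\omega D^{-1}(I+E)^{-1}\mathbf{p}$ with $\mathbf{p}=D^{-1}\mathbf{1}$ you bound the perturbation $(I+E)^{-1}\mathbf{p}-\mathbf{p}=-(I+E)^{-1}E\mathbf{p}$ in the sup norm by $2\tau_2\Vert\mathbf{p}\Vert_\infty\le\Vert\mathbf{p}\Vert_\infty$. This only yields $\bigl((I+E)^{-1}\mathbf{p}\bigr)_i\ge p_i-\Vert\mathbf{p}\Vert_\infty$, which is vacuous whenever the projected column norms $d_j=\Vert Q_{P}\mathbf{a}_j\Vert$ are unequal (then $p_i<\Vert\mathbf{p}\Vert_\infty$ for some $i$ and the lower bound is negative), and is only non-strict even in the equal-norm case at $\tau_2=1/2$. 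The candidate threshold $d_j\ge\tau_2\max_j d_j$ does not rescue it: it gives $p_i\ge\tau_2\Vert\mathbf{p}\Vert_\infty$, which is still smaller than the $2\tau_2\Vert\mathbf{p}\Vert_\infty$ perturbation bound. So the componentwise positivity that you yourself identify as the crux is not actually established.

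The paper closes this step by a different mechanism: it cites a result on scaled diagonally dominant matrices \cite{Rad16} asserting that, under $\tau_2\le 1/2$, the inverse of $S=C^TC$ is itself diagonally dominant with positive diagonal. Then $(S^{-1}\omega\mathbf{1})_i=\omega\bigl((S^{-1})_{ii}+\sum_{j\ne i}(S^{-1})_{ij}\bigr)>0$ row by row, because the right-hand side is constant and the diagonal dominance of the \emph{unscaled} inverse does all the work. Your detour through the rescaled system replaces the constant vector $\mathbf{1}$ by the non-constant $\mathbf{p}$, and that is precisely where the information is lost. To repair the argument you would need either to prove (or cite) the statement about diagonal dominance of $S^{-1}$ itself, or to establish a componentwise bound $\bigl|\bigl((I+E)^{-1}E\mathbf{p}\bigr)_i\bigr|<p_i$ rather than a uniform sup-norm bound.
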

\begin{proof}
The proof is an extension of Lemma 23.17 in \cite{LH95}, in which we end up showing 
\begin{equation*}
     \mathbf{y}\myPart{(s)}{J}= (C^TC)^{-1}\mathbf{w}\myPart{(s+1)}{J},
\end{equation*}
where $C = A[n_{s}+1:m,n_{s}+k_s]$ and $\mathbf{w}\myPart{(s+1)}{J} = (\omega, \dots, \omega)$, where $\omega = \max \mathbf{w}\myPart{(s+1)}{}>0$, because $\tau_1 = 1$. The matrix $C^TC$ is $\tau_2$-scaled diagonally dominant because it has been selected by deviation maximization, with $\tau_2\leq 1/2$, hence its inverse is also diagonally dominant with a positive diagonal \cite{Rad16}, yielding   
\begin{equation*}
    \mathbf{y}\myPart{(s)}{J}>0.
\end{equation*}
\end{proof}

This result shows that it is possible to analytically prove that cycling is avoided without the need of an additional loop to ensure a feasible descend direction. However, $\tau_1 = 1$ may be a too demanding choice and the algorithm may loose in performance. 
In Section \ref{sec:numerical_experiments}, we give experimental evidence of the significant performance gain that this strategy can lead.

\section{Sparse recovery} \label{sec:sparse_recovery}
In this section we present the problem of sparse recovery and its connections with nonnegative least squares and with the deviation maximization technique. Sparse recovery is a fundamental problem in compressed sensing, signal denoising, statistical model selection and related fields, and it consists in finding the sparsest solution to an underdetermined system of equations, see e.g. \cite{DEl03, Don06, libro_Elad}.  
There is a wide literature in the field of signal processing, in which a matrix is usually referred as a dictionary and its columns are called atoms. Here, we abandon this terminology in favour of the classical linear algebra one. 

Given a vector $\mathbf{x} \in \mathbb{R}^n$, we define its support  ${S} = {S}(\mathbf{x}) = \left\{i :\ x_i \neq 0 \right\}$. 
We aim at identifying the solution $\mathbf{x}^{\star}$ of the linear system $A x = b$ with the sparsest support ${S}^{\star}$. Formally, we consider the optimization problem 
\begin{equation}
    \min \Vert \mathbf{x} \Vert_0, \qquad \text{s.t. } \ A \mathbf{x} = \mathbf{b}. \label{eq:minl0_problem}
\end{equation}
Recall the ``$\ell_0$-norm'' of a vector $\mathbf{x}$ is defined as
\begin{equation*}
    \Vert \mathbf{x} \Vert_0 =  \left\vert {S}(\mathbf{x}) \right\vert =
    \left\vert \left\{ i: x_i \neq 0\right\} \right\vert,
\end{equation*}
and it is not an actual norm because it does not hold that for any scalar $s$ we have $\Vert s \mathbf{x} \Vert = |s|\ \Vert \mathbf{x} \Vert$. Actually, problem \eqref{eq:minl0_problem} of combinatorial search is NP-hard, and in general we seek an approximate solution. Two well known approaches are: “greedy algorithms", that focus on the determination of the support and then a simple LS solution on it, and “relaxation methods", that focus on smoothing the $l_0$ norm and then adopt smooth optimization. A famous greedy algorithm is the Orthogonal-Matching-Pursuit (OMP), while a popular convex relaxation technique is the so-called \textit{Basis Pursuit} (BP) \cite{CDo94} problem
\begin{equation}
    \min \Vert \mathbf{x} \Vert_1, \qquad \text{s.t. } \ A \mathbf{x} = \mathbf{b}, \label{eq:minl1_problem}
\end{equation}
which is a convex optimization problem, since $\Vert \mathbf{x} \Vert_1$ is a convex function of $x$ and the solutions of $A \mathbf{x} = \mathbf{b}$ form a convex set. We will concentrate on this second approach. Indeed, the two problems \eqref{eq:minl0_problem} and \eqref{eq:minl1_problem} yield the same solution provided it is sparse enough. This result is known as $\ell_0-\ell_1$ equivalence, and it has been found empirically \cite{CSD01} and theoretically \cite{Donoho-Huo_2001, EBr02}.

\subsection{Exact recovery} \label{sec:exact_recovery}

There are different conditions under which we have $\ell_0-\ell_1$ equivalence, i.e. an exact recovery. In the literature, these conditions are usually found by showing the uniqueness of solution to problems \eqref{eq:minl0_problem} and \eqref{eq:minl1_problem} and their coincidence. As a consequence of these results, efficient algorithms to solve \eqref{eq:minl1_problem} can be used to find the sparsest solution to an underdetermined system of equations. 

\subsubsection{Uniqueness based on RIP}
Let us recall the following definition.
\begin{definition}
Consider a matrix $A\in \mathbb{R}^{m\times n}$, and suppose there exists $\delta_s \in (0,1)$ such that for every $m \times s$ submatrix $A_s$ of $A$ and for every $y\in \mathbb{R}^s$ we have
\begin{equation}
    (1-\delta_s) \Vert y \Vert_2^2 \leq \Vert A_s y \Vert_2^2  \leq (1-\delta_s) \Vert y \Vert_2^2. \label{eq:RIP}
\end{equation}
We say that $A$ has the $s$-order \textbf{Restricted Isometry Property} (RIP) with constant $\delta_s$.
\end{definition}
For a given $s$, one is usually interested in the smallest constant $\delta_s$ for which \eqref{eq:RIP} holds, which is referred as the  $s$-order restricted isometry constant.
 
A popular result \cite{CANDES2008589} states that if $\delta_{2s} < 1$, then problem \eqref{eq:minl0_problem} has a unique solution $\mathbf{x}$ with support size obeying $\Vert \mathbf{x} \Vert_0 \leq s$. Moreover, if $\delta_{2s} < \sqrt{2} -1$, then the solution to problem \eqref{eq:minl1_problem} also solves problem \eqref{eq:minl0_problem}. However, the problem of establishing whether a given matrix $A$ fulfills the $s$-order RIP is NP-hard in general \cite{TPf14}.

\subsubsection{Uniqueness based on ERC}
Let us state the following result from \cite{Tropp_2004}.
\begin{theorem}
Consider a linear system $A\mathbf{x} = \mathbf{b}$. If there exists a solution $\bar{\mathbf{x}}$ with support $\bar{{S}} = {S}(\bar{\mathbf{x}})$ such that
\begin{equation}
    \max_{i\not\in \bar{{S}}} \left\Vert A^{\dagger}_{\bar{{S}}} \mathbf{a}_i \right\Vert_1 < 1, 
    \label{eq:ERC}
\end{equation}
then $\bar{\mathbf{x}}$ is the unique solution to the minimum $\ell_0$ problem \eqref{eq:minl0_problem}, which can be recovered by solving  the minimum $\ell_1$ problem \eqref{eq:minl1_problem}.
\end{theorem}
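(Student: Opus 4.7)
The plan is to prove that any feasible $\mathbf{z} \neq \bar{\mathbf{x}}$ satisfies $\Vert \mathbf{z}\Vert_1 > \Vert \bar{\mathbf{x}}\Vert_1$, giving uniqueness of the $\ell_1$-minimizer; a secondary spark-type argument then yields uniqueness of the $\ell_0$-minimizer and its coincidence with $\bar{\mathbf{x}}$. Throughout I would work under the hypothesis (implicit in the use of $A^\dagger_{\bar{S}}$) that $A_{\bar{S}}$ has full column rank, so that $A^\dagger_{\bar{S}} = (A_{\bar{S}}^T A_{\bar{S}})^{-1} A_{\bar{S}}^T$ is a genuine left inverse and $\bar{\mathbf{x}}$ is uniquely determined by its support.

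The heart of the argument is a kernel decomposition. Fix any feasible $\mathbf{z} \neq \bar{\mathbf{x}}$ and set $\mathbf{d} = \mathbf{z} - \bar{\mathbf{x}} \in \ker(A)$. Partition $\mathbf{d}$ into its components $\mathbf{d}_{\bar{S}}$ and $\mathbf{d}_{\bar{S}^c}$. The identity $A_{\bar{S}}\mathbf{d}_{\bar{S}} = -A_{\bar{S}^c}\mathbf{d}_{\bar{S}^c}$, combined with left-multiplication by $A^\dagger_{\bar{S}}$, yields
\begin{equation*}
\mathbf{d}_{\bar{S}} = -A^\dagger_{\bar{S}} A_{\bar{S}^c}\mathbf{d}_{\bar{S}^c}.
\end{equation*}
Since the induced $\ell_1 \to \ell_1$ operator norm of a matrix equals the maximum $\ell_1$ norm of its columns, and the columns of $A^\dagger_{\bar{S}} A_{\bar{S}^c}$ are exactly $\{A^\dagger_{\bar{S}} \mathbf{a}_i\}_{i \not\in \bar{S}}$, ERC \eqref{eq:ERC} gives $\Vert \mathbf{d}_{\bar{S}}\Vert_1 < \Vert \mathbf{d}_{\bar{S}^c}\Vert_1$ whenever $\mathbf{d}_{\bar{S}^c} \neq 0$ (and the case $\mathbf{d}_{\bar{S}^c} = 0$ is ruled out, since then $A_{\bar{S}}\mathbf{d}_{\bar{S}} = 0$ and full column rank force $\mathbf{d} = 0$). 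Using $\bar{\mathbf{x}}_{\bar{S}^c} = 0$ and the reverse triangle inequality,
\begin{equation*}
\Vert \mathbf{z}\Vert_1 = \Vert \bar{\mathbf{x}}_{\bar{S}} + \mathbf{d}_{\bar{S}}\Vert_1 + \Vert \mathbf{d}_{\bar{S}^c}\Vert_1 \geq \Vert \bar{\mathbf{x}}\Vert_1 + \bigl(\Vert \mathbf{d}_{\bar{S}^c}\Vert_1 - \Vert \mathbf{d}_{\bar{S}}\Vert_1\bigr) > \Vert \bar{\mathbf{x}}\Vert_1,
\end{equation*}
proving that $\bar{\mathbf{x}}$ is the unique minimizer of \eqref{eq:minl1_problem}.

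For the $\ell_0$ claim, the main obstacle is that ERC is intrinsically an $\ell_1$-type condition and does not directly constrain the support size of a competing feasible point, so the kernel decomposition above is not enough on its own. My plan is to invoke the standard chain of implications: ERC forces a bound on $|\bar{S}|$ in terms of the mutual coherence $\mu(A)$, namely $|\bar{S}| < \tfrac{1}{2}(1 + 1/\mu(A))$, which in turn gives $\mathrm{spark}(A) > 2|\bar{S}|$. Under this spark inequality any two feasible vectors with support of size at most $|\bar{S}|$ must coincide: a differing $\mathbf{z}$ would produce $\mathbf{d} = \mathbf{z} - \bar{\mathbf{x}} \in \ker(A) \setminus \{0\}$ with at most $2|\bar{S}|$ nonzeros, contradicting the spark bound. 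Hence $\bar{\mathbf{x}}$ is also the unique solution of \eqref{eq:minl0_problem}, and it is recovered by solving \eqref{eq:minl1_problem}.
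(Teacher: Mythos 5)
The paper does not actually prove this theorem; it is quoted from \cite{Tropp_2004} without proof, so there is no internal argument to compare against. Your $\ell_1$ half is the standard and correct route: the kernel decomposition $\mathbf{d}_{\bar{S}} = -A^{\dagger}_{\bar{S}} A_{\bar{S}^c}\mathbf{d}_{\bar{S}^c}$, the identification of the induced $\ell_1\to\ell_1$ norm with the maximal column $\ell_1$ norm, and the reverse triangle inequality together give strict $\ell_1$ optimality of $\bar{\mathbf{x}}$, and your explicit flagging of the full-column-rank hypothesis on $A_{\bar{S}}$ is appropriate (it is needed both for $A^{\dagger}_{\bar{S}}A_{\bar{S}}=I$ and to dispose of the case $\mathbf{d}_{\bar{S}^c}=0$).

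The $\ell_0$ half, however, rests on a reversed implication and does not go through. The coherence bound $|\bar{S}| < \tfrac{1}{2}\bigl(1+1/\mu(A)\bigr)$ is a \emph{sufficient} condition for the ERC (this is how Tropp and Donoho--Elad use it), not a consequence of it: the ERC is a statement about one particular support and can hold for supports far larger than the coherence bound allows. Concretely, take $A=(\mathbf{e}_1\ \mathbf{e}_2\ \mathbf{u}\ \mathbf{w})$ in $\mathbb{R}^3$ with unit columns $\mathbf{u}=(0.4,0.4,c)^T$, $\mathbf{w}=(0.4,0.4,-c)^T$, $c=\sqrt{0.68}$, and $\bar{S}=\{1,2\}$. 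Then $\Vert A^{\dagger}_{\bar{S}}\mathbf{u}\Vert_1=\Vert A^{\dagger}_{\bar{S}}\mathbf{w}\Vert_1=0.8<1$, so the ERC holds, yet $\mu(A)=0.4$ gives $\tfrac{1}{2}(1+1/\mu)=1.75<2=|\bar{S}|$, so the coherence bound you want to extract is violated and no spark argument follows. The same example shows the difficulty is not merely in your proof: with $\mathbf{b}=\mathbf{u}+\mathbf{w}=(0.8,0.8,0)^T$, both $\bar{\mathbf{x}}=(0.8,0.8,0,0)^T$ and $\mathbf{z}=(0,0,1,1)^T$ solve $A\mathbf{x}=\mathbf{b}$ with support size $2$, so $\bar{\mathbf{x}}$ is the unique $\ell_1$ minimizer but not the unique $\ell_0$ minimizer. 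The ERC by itself guarantees recovery of $\bar{\mathbf{x}}$ by Basis Pursuit (and by OMP); the uniqueness of the sparsest representation requires an additional hypothesis (e.g.\ a spark or coherence bound assumed up front, as in Tropp's original formulation), and you should either add such a hypothesis or restrict your claim to the $\ell_1$ statement you actually proved.
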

We refer to \eqref{eq:ERC} as \textbf{Exact Recovery Condition} (ERC). This provides an easy sufficient optimality check for a given support ${S}$, but finding a support ${S}$ satisfying \eqref{eq:ERC} is a combinatorial problem.


\subsection{Sparse recovery by NNLS}
The nonnegativity constraint is known to naturally produce sparse solutions, see e.g. \cite{Elad2008_uniqueness_nonnegative, FKo14, WTa09, WXT11}. An important outcome of this body of work is that nonnegativity alone may attain a satisfactory sparse recovery. 
There are also important similarities. For example, \cite{FKo14} shows that the before mentioned Orthogonal-Matching-Pursuit (OMP) resembles the Lawson-Hanson algorithm except for the internal loop, which has indeed an evident benefit on nonnegative recovery. They note also that this internal loop acts as a shrinkage: it brings to zero the component which is relatively more negative and brings to positive with a reduced magnitude the other components that have become negative. So, it modifies all the components and, more importantly, reduces the $l_1$-norm of the solution.
In synthesis, the Lawson-Hanson algorithm finds monotonically the solution if the growing support is correct, otherwise enters in a inner loop that reduces the support and diminishes the $l_1$-norm of the solution.

Notice also that arbitrary signed sparse recovery is easily achievable. Given $\bar{\mathbf{x}} \in \mathbb{R}^n$, decompose it as  $\bar{\mathbf{x}} = \bar{\mathbf{x}}^+ - \bar{\mathbf{x}}^- $, where $\bar{\mathbf{x}}^+ \geq 0$ and $\bar{\mathbf{x}}^- \geq 0$. Then the solutions of the linear system $A\bar{\mathbf{x}} = \mathbf{b}$ can be attained as the solutions of the nonnegative least squares problem 
\begin{equation}
    \min_{\mathbf{x}} \left\Vert (A\ -A) {\mathbf{x}} - \mathbf{b} \right\Vert_2^2, \quad {\mathbf{x}} \geq 0, \label{eq:positivity_trick}
\end{equation}
where ${\mathbf{x}} = \left( \begin{array}{l}
     \bar{\mathbf{x}}^+ \\
     \bar{\mathbf{x}}^-
\end{array} \right) \in \mathbb{R}^{2n}$. This has been shown e.g. in \cite{FKo14} and it is sometimes referred as ``positivity trick''.

\subsubsection{Uniqueness for NNLS problem }
This topic has been treated in \cite{Elad2008_uniqueness_nonnegative, Slawski-Hein-2013, FKo14}.

Let us define the following class of matrices
\begin{equation}
    {\cal M}^+ = \left\{A\in \mathbb{R}^{m\times n} : \ \exists\ \mathbf{h}\in \mathbb{R}^m \text{ s.t. } A^T\mathbf{h} = \mathbf{w}>0 \right\}. \label{eq:M+}
\end{equation}
A necessary condition in order to have a unique solution to the NNLS problem \eqref{eq:NNLS} is that $A$ belongs to ${\cal M}^+$ \cite{WTa09}. 
\begin{definition}
The \textbf{coherence} or \textbf{mutual coherence} or \textbf{two-sided coherence} of a matrix $A$ is defined as
\begin{equation}
    \mu(A) = \max_{i < j} \vert \theta_{ij} \vert \left( = \max_{i > j} \vert \theta_{ij} \vert \right). \label{eq:mutual_coherence}
\end{equation}
\end{definition}

In \cite{Tropp_2004} it is demonstrated the following uniqueness result, under the necessary condition \eqref{eq:M+}.
\begin{theorem}
Consider a linear system $A\mathbf{x} = \mathbf{b}$ such that $A \in {\cal M}^+$. If there exists a nonnegative solution $\bar{\mathbf{x}}$ with $\Vert \bar{\mathbf{x}} \Vert_0 \leq s$ and
\begin{equation}
    \mu(A) < \frac{1}{2s-1}, 
\end{equation} 
then $\bar{\mathbf{x}}$ is the unique nonnegative solution and the minimum $\ell_1$-norm solution to $A\mathbf{x} = \mathbf{b}$.
\end{theorem}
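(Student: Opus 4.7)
The plan is to split the conclusion into two claims and handle each with a separate standard tool: (a) $\bar{\mathbf{x}}$ is the unique minimum-$\ell_1$-norm solution of $A\mathbf{x}=\mathbf{b}$, obtained from the ERC theorem of Sec. \ref{sec:exact_recovery} by verifying the ERC from the coherence bound; and (b) $\bar{\mathbf{x}}$ is the unique nonnegative solution of $A\mathbf{x}=\mathbf{b}$, obtained from the spark estimate implied by the coherence together with the $\mathcal{M}^+$ hypothesis, via the uniqueness theorem for nonnegative sparse recovery in \cite{Elad2008_uniqueness_nonnegative}.

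For (a), after normalising the columns of $A$ (which leaves the cosine matrix and hence $\mu(A)$ unchanged), I would verify the ERC on $\bar{S}=\mathrm{supp}(\bar{\mathbf{x}})$. The Gram matrix $G = A_{\bar{S}}^T A_{\bar{S}}$ has unit diagonal and off-diagonal entries bounded in absolute value by $\mu(A)$, so Gershgorin gives invertibility and $\Vert G^{-1}\Vert_\infty\le 1/(1-(s-1)\mu(A))$. For any $i\notin\bar{S}$, $\Vert A_{\bar{S}}^T\mathbf{a}_i\Vert_\infty\le\mu(A)$, hence
\[
\Vert A_{\bar{S}}^\dagger \mathbf{a}_i\Vert_1 \;\le\; |\bar{S}|\,\Vert G^{-1}\Vert_\infty\,\mu(A) \;\le\; \frac{s\,\mu(A)}{1-(s-1)\mu(A)},
\]
and the hypothesis $\mu(A)(2s-1)<1$ is exactly the condition that makes the right-hand side strictly less than $1$. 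The ERC is satisfied and the theorem from Sec. \ref{sec:exact_recovery} yields (a).

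For (b), I would exploit both the coherence bound and the $\mathcal{M}^+$ structure. The Donoho-Elad estimate $\mathrm{spark}(A)\ge 1+1/\mu(A)$ combined with $\mu(A)(2s-1)<1$ gives $\mathrm{spark}(A)>2s$, so every collection of at most $2s$ columns of $A$ is linearly independent and in particular $A_{\bar{S}}$ is of full column rank. With $\mathbf{h}$ realising $A^T\mathbf{h}=\mathbf{w}>0$, every nonnegative solution $\tilde{\mathbf{x}}$ satisfies $\mathbf{w}^T\tilde{\mathbf{x}}=\mathbf{h}^T\mathbf{b}=\mathbf{w}^T\bar{\mathbf{x}}$, so the polytope $\mathcal{F}=\{\mathbf{x}\ge 0:A\mathbf{x}=\mathbf{b}\}$ is contained in a hyperplane and hence bounded. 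Setting $\mathbf{d}=\tilde{\mathbf{x}}-\bar{\mathbf{x}}\in\ker A$, the requirement $d_i=\tilde{x}_i\ge 0$ for $i\notin\bar{S}$ confines the negative part of $\mathbf{d}$ to $\bar{S}$, which combined with $\mathbf{w}^T\mathbf{d}=0$ (and $\mathbf{w}>0$) and the spark bound forces $\mathbf{d}=0$; this is precisely the nonnegative uniqueness statement of \cite{Elad2008_uniqueness_nonnegative}.

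The main obstacle is the second part. The coherence bound by itself controls only $\ell_0$-sparse solutions, not arbitrary nonnegative ones, so the spark-versus-support comparison used in the $\ell_0$-case cannot be applied naively to an unknown competitor $\tilde{\mathbf{x}}$ whose support could exceed $s$. The delicate step is showing that the hyperplane constraint $\mathbf{w}^T\mathbf{x}=\mathbf{h}^T\mathbf{b}$ coming from $A\in\mathcal{M}^+$ reduces the effective support of $\mathbf{d}=\tilde{\mathbf{x}}-\bar{\mathbf{x}}$ below the spark threshold, which is the content of the Bruckstein-Elad-Zibulevsky result invoked above.
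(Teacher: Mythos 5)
First, note that the paper itself offers no proof of this statement: it is quoted verbatim as a known result with a pointer to the literature (attributed to \cite{Tropp_2004}; the nonnegative part is really the Bruckstein--Elad--Zibulevsky / Wang--Tang line of results). So your part (a) — verifying the ERC from the coherence bound via $\Vert G^{-1}\Vert_\infty \le 1/(1-(s-1)\mu)$ and $\Vert A_{\bar S}^T\mathbf{a}_i\Vert_\infty\le\mu$, and checking that $s\mu/(1-(s-1)\mu)<1$ is exactly $(2s-1)\mu<1$ — is the standard argument and is correct, modulo the usual caveat that both the ERC bound and the $\ell_1$-minimality claim require the columns of $A$ to be unit-norm (coherence is scale-invariant but $\Vert A_{\bar S}^\dagger\mathbf{a}_i\Vert_1$ and the $\ell_1$ minimizer are not), so the normalization must be assumed rather than performed.

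The genuine gap is in part (b). The mechanism you describe — $\mathbf{d}=\tilde{\mathbf{x}}-\bar{\mathbf{x}}\in\ker A$, $d_i\ge 0$ off $\bar S$, $\mathbf{w}^T\mathbf{d}=0$, plus $\mathrm{spark}(A)>2s$ — does not by itself force $\mathbf{d}=0$. The spark bound only says that a nonzero $\mathbf{d}\in\ker A$ has support of size at least $2s+1$, hence at least $s+1$ strictly positive entries off $\bar S$; the identity $\sum_{i\in\bar S}w_id_i=-\sum_{i\notin\bar S}w_id_i<0$ then merely tells you that $\mathbf{d}$ has some negative entries on $\bar S$, which is perfectly consistent. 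Nothing contradicts the existence of such a $\mathbf{d}$, so the "forces $\mathbf{d}=0$" step is an assertion, not a deduction — and you acknowledge as much by deferring the "delicate step" to the cited result. The argument that actually closes is different: set $W=\diag(\mathbf{w})$ and $\hat A=AW^{-1}$, so that $\mathbf{h}^T\hat A=\mathbf{1}^T$ and every nonnegative solution $\mathbf{z}=W\mathbf{x}$ of $\hat A\mathbf{z}=\mathbf{b}$ satisfies $\Vert\mathbf{z}\Vert_1=\mathbf{1}^T\mathbf{z}=\mathbf{h}^T\mathbf{b}$, a constant. Since $\mu(\hat A)=\mu(A)<1/(2s-1)$ and $\bar{\mathbf{z}}=W\bar{\mathbf{x}}$ is $s$-sparse, the null space property (equivalently, your part (a) applied to $\hat A$ after column normalization, with the attendant bookkeeping) gives $\Vert\mathbf{v}_{\bar S}\Vert_1<\Vert\mathbf{v}_{\bar S^c}\Vert_1$ for every nonzero $\mathbf{v}\in\ker\hat A$; combined with $\mathbf{1}^T\mathbf{v}=0$ and $v_i\ge 0$ off $\bar S$, which yield $\Vert\mathbf{v}_{\bar S^c}\Vert_1=-\sum_{i\in\bar S}v_i\le\Vert\mathbf{v}_{\bar S}\Vert_1$, this is a contradiction unless $\mathbf{v}=0$. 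In short, the $\mathcal{M}^+$ hypothesis is used to turn the nonnegative uniqueness question into an $\ell_1$ uniqueness question via a diagonal rescaling, not to shrink the support of $\mathbf{d}$ below the spark; without that reduction your part (b) remains open.
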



\subsection{Sparsity enhancing methods and approximate measurements} \label{sec:lambda}

Let us now consider the more general situation in which the linear system does not have to be exactly solved, i.e. we rather look for an approximate solution satisfying $A\mathbf{x} = \mathbf{b} +  {\boldsymbol\varepsilon}$, where ${\boldsymbol\varepsilon}$ is the vector containing the error in the measurements. A famous method addressing this problem is the so-called LASSO \cite{Tib96_lasso}. Following \cite{FKo14}, we address to the solution of the following closely related problem
\begin{equation}
    \min \Vert \mathbf{x} \Vert_1^2 + \lambda \Vert A \mathbf{x} -\mathbf{b} \Vert^2_2. \label{eq:l1_LS} 
\end{equation}
As we already pointed out that no theoretical result states that NNLS solvers ensure sparse recovery, problem \eqref{eq:l1_LS} can be interpreted as a sparsity enhancing or squared $\ell_1$-regularization method consisting in adding a penalization term to the objective function. This problem can be thought as a weighted-sum form of a multiobjective problem where the parameter $\lambda$ controls the tradeoff between the two objectives. This technique suffers from an evident drawback that is the choice of $\lambda$, which is not explicitly related norm to the sparsity, nor to the linear system.

In  \cite{FKo14}, problem \eqref{eq:l1_LS} is recast as a nonnegative least squares problem. By making use of the positivity trick, problem \eqref{eq:l1_LS} becomes
\begin{equation}
    \min \left\Vert  \left( \begin{array}{c}
         1\ \dots\ 1  \\
         \lambda {A}\ -\lambda A 
    \end{array} \right) \left( \begin{array}{l}
     \mathbf{x}^+ \\
     \mathbf{x}^-
\end{array} \right) - 
    \left( \begin{array}{c}
         0  \\
        \lambda \mathbf{b}  
    \end{array}\right) \right\Vert^2_2, \qquad \text{s.t. }\mathbf{x}^+, \mathbf{x}^- \geq 0 \label{eq:l1_NNLS} 
\end{equation}
which will be referred as $\ell_1$-NNLS. A nice feature of this form is that the matrix involved clearly belongs to ${\cal M}^+$ and uniqueness of the solution is possible for any $\lambda$. The equivalence between problem \eqref{eq:minl1_problem} and problem \eqref{eq:l1_NNLS} above can be established for $\lambda \rightarrow \infty$, meaning that the squared residual is much more important then the $\ell_1$-norm, which becomes more and more vacuous as $\lambda$ gets larger. This suggests it could be sufficient to solve a simple NNLS problem in order to get a sparse solution.

\section{Upgrading the QR decomposition} \label{sed:modifiedQR}

%
We have already mentioned that in the methods here presented it is not necessary to compute the QR decomposition from scratch, but instead one can upgrade the current QR decomposition. In this section we detail the “updating" and “downdating" procedures to be adopted in the outer and inner loop, respectively. In the next subsection we see the general case, while in the following we see a simplified downdating applicable with the positivity trick.

\subsection{The general case}
Let us first consider the problem of updating the QR decomposition in the outer loop. Let $A$ be an $m \times n$ matrix and $B$ an $m \times k$ matrix, both full rank by columns, such that $k+n \leq m$, and suppose moreover that the columns in $B$ are linearly independent from those in $A$. We want to exploit the knowledge of the decomposition $A = QR$ in order to compute $\widetilde{Q}, \widetilde{R}$ such that
\begin{equation*}
    \widetilde{A} \coloneqq (A\ B) = \widetilde{Q} \widetilde{R}.
\end{equation*}
The following procedure is based on Householder transformations, and it is simply the Block Recursive Householder decomposition (without computing the initial QR decomposition $A=QR$, that has already been computed in this context).

\begin{algorithm}[ht]
\caption{Updating}
\label{alg:up}
\begin{algorithmic}[1]
\State $A = QR$, $B$
\State $B = Q^T B$ \label{alg:up_step:QtB}
\State $\widetilde{Q} = Q$, $\widetilde{R} = (R\ B)$
\For{$i = 1,\dots, k$}
\State $[\beta, v] = $house$(B(n+i:m,i))$ \Comment{compute Householder reflector}
\State $\widetilde{R}(n+i:m,n+i:m) = (I_{m-(n+i)+1} - \beta v\ v^T) \widetilde{R}(n+i:m,n+i:m)$
\State ${H}_{n+i} = I_{m}$  \label{alg:up_step:Q_init}
\State ${H}_{n+i}(n+i:m,n+i:m) = (I_{m-(n+i)+1} - \beta v\ v^T) $
\State $\widetilde{Q} = {H}_{n+i} \widetilde{Q}$ \label{alg:up_step:Q_end}
\EndFor
\end{algorithmic}
\end{algorithm}

Note that explicit computation of $\widetilde{Q}$ is not necessary if we express it as a product of Householder matrices
\begin{equation*}
    \widetilde{Q} = Q {H}_{n+1} \cdots {H}_{n+k}.
\end{equation*}
Operation count for the above algorithm (without taking into account the explicit computation of $\widetilde{Q}$ in steps \ref{alg:up_step:Q_init}--\ref{alg:up_step:Q_end}) if that of an Householder $QR$ decomposition of a matrix of size $m-(n+1) \times k$, i.e. about $2k^2(m-(n+1) -k/3)$ flops, plus the operation count in step \ref{alg:up_step:QtB}. If we suppose that $Q$ itself is expressed as a product of Householder matrices, i.e. 
\begin{equation*}
    {Q} = {H}_{n} \cdots {H}_{n},
\end{equation*}
we get about $kn(2m-n)$ flops. The whole procedure involves about $2k^2(m-(n+1) -k/3) + kn(2m-n) = \mathcal{O}(k^2m + knm)$ flops.

Let us now consider the downdating problem. Consider a set of $k$ column indices $\left\{ j_1, \dots, j_k \right\}$, with $1 \leq j_1 < \dots < j_k < n $, and let $\widetilde{A}$ be the $m \times (n-k)$ matrix whose columns are the columns of $A$ with index not in $\left\{ j_1, \dots, j_k \right\}$. Again, we want to exploit the knowledge of the decomposition $A = QR$ in order to compute $\widetilde{Q}, \widetilde{R}$ such that
\begin{equation*}
    \widetilde{A} = \widetilde{Q} \widetilde{R}.
\end{equation*}
The matrix $\widetilde{A}$ has a structure close to the Hessenberg form. Suppose $m = 8$, $n = 7$, $k=2$, $j_1 = 2$ and $j_2 = 5$. Then we have
\begin{equation*}
    \widetilde{A} = 
    \begin{bmatrix}
    \begin{array}{ccccc}
         \times & \times & \times & \times & \times \\
                & \times & \times & \times & \times \\
                & \times & \times & \times & \times \\
                &        & \times & \times & \times \\
                &        &        & \times & \times \\
                &        &        & \times & \times \\
                &        &        &        & \times \\
                &        &        &        &  \\
    \end{array}
    \end{bmatrix}.
\end{equation*}
The following procedure is based on Givens rotations. We do this in a column-major fashion, using Givens rotation to annihilate the highlighted entries one at a time:
\begin{align*}
    &\begin{bmatrix}
    \begin{array}{ccccc}
         \times & \times & \times & \times & \times \\
                & \times & \times & \times & \times \\
                & \pmb{\times} & \times & \times & \times \\
                &        & \times & \times & \times \\
                &        &        & \times & \times \\
                &        &        & \times & \times \\
                &        &        &        & \times \\
                &        &        &        &  \\
    \end{array}
    \end{bmatrix} \rightarrow
    \begin{bmatrix}
    \begin{array}{ccccc}
         \times & \times & \times & \times & \times \\
                & \times & \times & \times & \times \\
                &        & \times & \times & \times \\
                &        & \pmb{\times} & \times & \times \\
                &        &        & \times & \times \\
                &        &        & \times & \times \\
                &        &        &        & \times \\
                &        &        &        &  \\
    \end{array}
    \end{bmatrix} \rightarrow
    \begin{bmatrix}
    \begin{array}{ccccc}
         \times & \times & \times & \times & \times \\
                & \times & \times & \times & \times \\
                &        & \times & \times & \times \\
                &        &        & \times & \times \\
                &        &        & \times & \times \\
                &        &        & \pmb{\times} & \times \\
                &        &        &        & \times \\
                &        &        &        &  \\
    \end{array}
    \end{bmatrix} \rightarrow \\
    &\begin{bmatrix}
    \begin{array}{ccccc}
         \times & \times & \times & \times & \times \\
                & \times & \times & \times & \times \\
                &        & \times & \times & \times \\
                &        &        & \times & \times \\
                &        &        & \pmb{\times} & \times \\
                &        &        &        & \times \\
                &        &        &        & \times \\
                &        &        &        &  \\
    \end{array}
    \end{bmatrix} \rightarrow
    \begin{bmatrix}
    \begin{array}{ccccc}
         \times & \times & \times & \times & \times \\
                & \times & \times & \times & \times \\
                &        & \times & \times & \times \\
                &        &        & \times & \times \\
                &        &        &        & \times \\
                &        &        &        & \times \\
                &        &        &        & \pmb{\times} \\
                &        &        &        &  \\
    \end{array}
    \end{bmatrix} \rightarrow
    \begin{bmatrix}
    \begin{array}{ccccc}
         \times & \times & \times & \times & \times \\
                & \times & \times & \times & \times \\
                &        & \times & \times & \times \\
                &        &        & \times & \times \\
                &        &        &        & \times \\
                &        &        &        & \pmb{\times} \\
                &        &        &        &  \\
                &        &        &        &  \\
    \end{array}
    \end{bmatrix}.
\end{align*}
Thus, columns with indices $j_l < j < j_{l+1}$ have exactly $l$ subdiagonal entries to annihilate. Note that one could also zero out one subdiagonal at a time, with exactly the same number of floating point operations.
The following algorithm describes the resulting procedure (in column-major fashion). 
\begin{algorithm}[ht]
\caption{Downdating}
\label{alg:down}
\begin{algorithmic}[1]
\State $A = QR$, $j_1 < \dots < j_k $
\State $\widetilde{Q} = Q$, $\widetilde{R} = R$, $j_{k+1} = n+1$ 
\For{$l = 1,\dots, k$}
\For{$j=j_l+1,\dots,j_{l+1}-1$}
\For{$p=1,\dots,l$}
\State $i = j - p +1$
\State $\widetilde{G} = $givens$(\widetilde{A}(i-1:i,j))$ \Comment{compute Givens rotation}
\State $\widetilde{R}(i-1:i,j+1:m) = \widetilde{G} \widetilde{R}(i-1:i,j+1:m)$
\State $\widetilde{Q}(:,i-1:i) = \widetilde{Q}(:,i-1:i) \widetilde{G}^T$ 
\EndFor
\EndFor
\EndFor
\State $\widetilde{R}(:,j_1, \dots, j_k) = [\ ]$ \Comment{delete columns $j_1, \dots, j_k$}
\end{algorithmic}
\end{algorithm}

Again, the explicit computation of $\widetilde{Q}$ is not necessary if we express it as a product of Givens rotation matrices
\begin{equation*}
    \widetilde{Q} = Q {G}_{1}^T \cdots {G}_{t}^T,
\end{equation*}
where $t$ is the total number of computed rotations and $G_l$ is the $m\times m$ matrix obtained enlarging the corresponding $\widetilde{G}_l$.

The operation count can be computed by observing that this procedure boils down to the annihilation of $k$ subdiagonals of different length, as in the second exemplification.
After removing column $j_l$, we have to zero out the last element of each column of the remaining $m \times (n-j_l - (k-l))$ matrix: in fact the columns from $j_l$-th (not included) until the last one are $n-j_l$, but there are also $k-l$ columns we don't have to downdate, those with indices $j_{l+1},\dots,j_{k}$, that have already been removed. Define $n_l = (n-j_l - (k-l))$. Then, this operation takes $\sum_{i=0}^{n_l} 6 (n_l -i) = \mathcal{O}(3n_l^2) = \mathcal{O}(3(n-j_l - (k-l))^2)$. We compute the worst case of the total operation count, in which the first $k$ columns are removed: we have $j_l = l$ with $1 \leq l \leq k$, thus we have to zero out the last $k$ subdiagonals of the remaining $m \times (n-k)$ matrix. This involves $\mathcal{O}(\sum_{l=1}^k 3(n-k)^2 )= \mathcal{O}(3n^2k+3k^3-6nk^2)$ flops.

The updating and downdating procedures allow to modify the current QR decomposition with a remarkable computational saving. In a practical implementation, one can apply each time the orthogonal transformations to the whole matrix, or only to the columns whose indices are in the current passive set. In the first case, it is not necessary to keep trace of the transformations themselves. In the second case, we need to keep in memory the sequence of orthogonal transformations computed so far; moreover, an additional vector is needed to keep trace of the number of orthogonal transformations that have been applied to each column.

\subsection{The positivity trick case}
\label{sec:positivity-trick}

Let us now consider the particular case in which we deal with the NNLS problem \eqref{eq:positivity_trick}. In such a case, the objective function's matrix is $[A\ ;\ -A]$. The updating procedure by Householder triangularization is unchanged, while the downdating procedure by Givens rotations in the inner loop can be avoided with a simple “sign flip", as we show here.

Suppose that $\mathbf{y}\myPart{(s)}{}$ is not a feasible point, thus $\mathbf{y}\myPart{(s)}{k} <0$, for some $k \in P_s$. Since $\mathbf{s}\myPart{(s)}{}:= \mathbf{y}\myPart{(s+1)}{} - \mathbf{x}\myPart{(s-1)}{}$ is a feasible descend direction, we have $k \not \in J_s$. If $k\leq n$, the index $k$ corresponds to a column of $A$, otherwise if $n < k\leq 2n$, $k$ corresponds to a column of $-A$. Let us denote by $\bar{k}$ the twin index of $k$, i.e. $\bar{k} = k+n$ if $k\leq n$, and $\bar{k} = k-n$ if $n < k\leq 2n$. Let us show that if we let $P\myPart{(s+1)}{} = (P\myPart{(s)}{} \cup \left\{ \bar{k} \right\}) \setminus \left\{ {k} \right\}$, then $\mathbf{y}\myPart{(s+1)}{}$ is a feasible point and $\mathbf{s}\myPart{(s+1)}{}:= \mathbf{y}\myPart{(s+1)}{} - \mathbf{x}\myPart{(s-1)}{}$ is a feasible descend direction.

We first show that $\mathbf{y}\myPart{(s+1)}{}$ is a feasible point. With a little abuse of notation, we will denote by $k$ both the index itself and its position within the passive set $P_s$.
If we write by columns $A\myPart{(s)}{P} = (\mathbf{a}_1\ \dots \mathbf{a}_{k-1}\ \mathbf{a}_k\ \mathbf{a}_{k+1} \dots \mathbf{a}_{n_s} )$, then we have $A\myPart{(s+1)}{P} = (\mathbf{a}_1\ \dots \mathbf{a}_{k-1}\ -\mathbf{a}_k\ \mathbf{a}_{k+1} \dots \mathbf{a}_{n_s} )$. 
Consider the QR decomposition
\begin{equation*}
    A\myPart{(s)}{P} = Q\myPart{(s)}{}\ R\myPart{(s)}{} = Q\myPart{(s)}{}\ (\mathbf{r}_1\ \dots \mathbf{r}_{k-1}\ \mathbf{r}_k\ \mathbf{r}_{k+1} \dots \mathbf{r}_{n_s} ),
\end{equation*}
then we can write
\begin{equation*}
    A\myPart{(s+1)}{P} = Q\myPart{(s)}{}\ R\myPart{(s+1)}{} = Q\myPart{(s)}{}\ (\mathbf{r}_1\ \dots \mathbf{r}_{k-1}\ -\mathbf{r}_k\ \mathbf{r}_{k+1} \dots \mathbf{r}_{n_s} ).
\end{equation*}
Indeed, if $H$ is the Householder reflector of a vector $\mathbf{x}$, i.e. $H\mathbf{x} = \Vert \mathbf{x} \Vert\ \mathbf{e}_1$, then $H(-\mathbf{x}) = -\Vert \mathbf{x} \Vert\ \mathbf{e}_1$. Now,  $\mathbf{y}\myPart{(s)}{}$ is the solution of the least squares problem 
\begin{equation}
     \mathbf{y}\myPart{(s)}{P} = \argmin_{\mathbf{y}}  \left\Vert A\myPart{(s)}{P} \mathbf{y}-\mathbf{b}\right\Vert, \quad \mathbf{y}\myPart{(s)}{Z}=0,
\end{equation}
which can be obtained as the solution of 
\begin{equation}
    R\myPart{(s)}{} \mathbf{y}\myPart{(s)}{P} = \mathbf{b}\myPart{(s)}{P}:= (Q\myPart{(s)}{})^T\mathbf{b}, \quad \mathbf{y}\myPart{(s)}{Z}=0.
\end{equation}
Let us split $\mathbf{y}\myPart{(s)}{P} = (\mathbf{y}\myPart{(s)}{1}, y\myPart{(s)}{k}, \mathbf{y}\myPart{(s)}{2})^T$, where $\mathbf{y}\myPart{(s)}{1}\in \mathbb{R}^{k-1}$, $\mathbf{y}\myPart{(s)}{2}\in \mathbb{R}^{n_s - k}$, and $y\myPart{(s)}{k}<0$. Similarly, we write $\mathbf{b}\myPart{(s)}{P} = (\mathbf{b}\myPart{(s)}{1}, b\myPart{(s)}{k}, \mathbf{b}\myPart{(s)}{2})^T$, where $\mathbf{b}\myPart{(s)}{1}\in \mathbb{R}^{k-1}$, $\mathbf{b}\myPart{(s)}{2}\in \mathbb{R}^{m - k}$, and $\mathbf{y}\myPart{(s+1)}{P} = (\mathbf{y}\myPart{(s+1)}{1}, y\myPart{(s+1)}{\bar{k}}, \mathbf{y}\myPart{(s+1)}{2})^T$. The triangular matrices can be split as
\begin{equation}
    R\myPart{(s)}{} = \left( \begin{array}{lll}
         R_{11} & \mathbf{r}_{k}(1:k-1) & R_{12}(1:k-1,:)  \\
         0^T &  \mathbf{r}_{k}(k) & R_{12}(k,:)\\
         0 & 0 & R_{22}
    \end{array}\right),
\end{equation}
and 
\begin{equation}
    R\myPart{(s+1)}{} = \left( \begin{array}{lll}
         R_{11} & \mathbf{-r}_{k}(1:k-1) & R_{12}(1:k-1,:)  \\
         0^T &  \mathbf{-r}_{k}(k) & R_{12}(k,:)\\
         0 & 0 & R_{22}
    \end{array}\right),
\end{equation}
where $R_{11}$ is upper triangular of order $k-1$, $R_{12}$ has size $k \times (n_s-k)$, and $R_{22}$ is upper triangular of size $(m-k)\times (n_s-k)$.

Since $\mathbf{y}\myPart{(s)}{2}$ solves $\min \| R_{22}\mathbf{y} - \mathbf{b}_2 \|$, we have $\mathbf{y}\myPart{(s)}{2} = \mathbf{y}\myPart{(s+1)}{2}$.
By backsubstitution, we obtain
\begin{equation}
   y\myPart{(s)}{k} = \frac{1}{\mathbf{r}_{k}(k)}\left( b\myPart{(s)}{k} - R_{12}(k,1:n_s-k)^T \mathbf{y}\myPart{(s)}{2} \right) <0 ,
\end{equation}
and similarly 
\begin{equation}
   y\myPart{(s+1)}{\bar{k}} = -\frac{1}{\mathbf{r}_{k}(k)}\left( b\myPart{(s)}{k} - R_{12}(k,1:n_s-k)^T \mathbf{y}\myPart{(s+1)}{2} \right) = - y\myPart{(s)}{k}  > 0,
\end{equation}
where we used the fact that $\mathbf{y}\myPart{(s+1)}{2} = \mathbf{y}\myPart{(s)}{2}$.
Last, we have
\begin{equation}
   \mathbf{y}\myPart{(s)}{1} = R_{11}^{-1}\left(\mathbf{b}\myPart{(s)}{1} - y\myPart{(s)}{k} \mathbf{r}_{k}(1:k-1) - R_{12}\mathbf{y}\myPart{(s)}{2}\right),
\end{equation}
and
\begin{equation}
\begin{aligned}
   \mathbf{y}\myPart{(s+1)}{1} &= R_{11}^{-1}\left(\mathbf{b}\myPart{(s)}{1} + y\myPart{(s+1)}{\bar{k}} \mathbf{r}_{k}(1:k-1) - R_{12}(1:k-1,:)\mathbf{y}\myPart{(s+1)}{2}\right) \\
   &= R_{11}^{-1}\left(\mathbf{b}\myPart{(s)}{1} - y\myPart{(s)}{k} \mathbf{r}_{k}(1:k-1) - R_{12}(1:k-1,:)\mathbf{y}\myPart{(s)}{2}\right) = \mathbf{y}\myPart{(s)}{1}.
\end{aligned}
\end{equation}
Therefore, $\mathbf{y}\myPart{(s+1)}{P} = (\mathbf{y}\myPart{(s)}{1}, -y\myPart{(s)}{k}, \mathbf{y}\myPart{(s)}{2})^T>0$, meaning that $\mathbf{y}\myPart{(s+1)}{}$ is a feasible point and $\mathbf{s}\myPart{(s+1)}{} = (\mathbf{y}\myPart{(s+1)}{} - \mathbf{x}\myPart{(s-1)}{})$ is a feasible direction.

Let us show that $\mathbf{s}\myPart{(s+1)}{}$ is also a descend direction.  Suppose now without loss of generality that $k\leq n$ ($\bar{k}>n$), i.e. $k$ corresponds to a column of $A$, and let $A=(\mathbf{a}_1\ \dots \ \mathbf{a}_n)$. Recall that, at the beginning of the inner loop, the vector $\mathbf{s}\myPart{(s)}{} = (\mathbf{y}\myPart{(s)}{} - \mathbf{x}\myPart{(s-1)}{})$ is a descend direction, i.e.
\begin{equation*}
    \|\mathbf{r}\myPart{(s)}{}\| = \|(A\ -A)\mathbf{y}\myPart{(s)}{} - \mathbf{b}\| 
    < \|(A\ -A)\mathbf{x}\myPart{(s-1)}{} - \mathbf{b}\|.
\end{equation*}
 We have 
\begin{equation*}
\begin{aligned}
    \mathbf{r}^{(s+1)} &= \mathbf{b} - (A\ -A)\mathbf{y}\myPart{(s+1)}{}\\ 
    &= 
    \mathbf{b} - \sum_{\substack{i \in P \\ i \leq n}}\mathbf{a}_i y\myPart{(s+1)}{i} + \sum_{\substack{i \in P \\ i > n, i \neq \bar{k}}}\mathbf{a}_i y\myPart{(s+1)}{i} + \mathbf{a}_i y\myPart{(s+1)}{\bar{k}}\mathbf{a}_{\bar{k}}\\
    &= 
    \mathbf{b} - \sum_{\substack{i \in P \\ i \leq n, i \neq k}}\mathbf{a}_i y\myPart{(s)}{i} + \sum_{\substack{i \in P \\ i > n}}\mathbf{a}_i y\myPart{(s)}{i} + \mathbf{a}_i y\myPart{(s)}{k}\mathbf{a}_{k} \\
    &=
    \mathbf{r}^{(s)},
\end{aligned}
\end{equation*}
therefore $\mathbf{s}\myPart{(s+1)}{}$ is a feasible descend direction with $\mathbf{y}\myPart{(s+1)}{}$ feasible. We can set $\mathbf{x}\myPart{(s+1)}{} = \mathbf{y}\myPart{(s+1)}{}$. Notice that once computed $\mathbf{y}\myPart{(s)}{}$, no further computation is needed to obtain $\mathbf{y}\myPart{(s+1)}{}$, except for the sign flip.

\section{Numerical experiments}
\label{sec:numerical_experiments}
In this section, we present a few results obtained by applying LHDM and other methods for sparse recovery, to a quite large set of linear systems. 


We compared LHDM with LH, both written by us in Matlab, with the other convex relaxation techniques tested in \cite{AAVV_Solving_Basis_Pursuit-Heuristic_Optimality_Check_and_Solver_Comparison} and with the Orthogonal Matching Pursuit algorithm (OMP), whose implementation is publicly available online:
\begin{itemize}
    \item “$l1\_magic$", a primal-dual interior-point method for a LP reformulation of Basis-Pursuit \eqref{eq:minl1_problem}, from \verb@www.l1-magic.org@;
    \item “$l1\_homotopy$", from \texttt{users.ece.gatech.edu/\textasciitilde sasif/ homotopy};
    \item “$solveBP$", from \textit{SparseLab} (\texttt{sparselab.stanford.edu});
    \item “$spgl1$", from \texttt{www.cs.ubc.ca/labs/scl/spgl1};
    \item “$isal1$", from \texttt{wwwopt.mathematik.tu-darmstadt.de/spear};
    \item “$solveOMP$", from \textit{SparseLab} (\texttt{sparselab.stanford.edu}).
\end{itemize}

As done in \cite{AAVV_Solving_Basis_Pursuit-Heuristic_Optimality_Check_and_Solver_Comparison}, we used the default setting for all optional parameters of these solvers.

The comparisons are made in terms of solution quality, i.e. the $\ell_2$ norm of the error (“distance to optimum"), and execution times. Note that the distance to optimum is computable because the unicity of the solution is guaranteed, under the ERC property \eqref{eq:ERC}.

Our purpose is not to provide an exhaustive experimental comparison between the methods but, instead, to assess the substantial performance gain attained by the Deviation Maximization to the Lawson-Hanson algorithm, i.e. by the LHDM algorithm, and its competitiveness with the other methods publicly available for sparse recovery of dense, underdetermined systems. Actually, numerical experiments reveal that LHDM is a good choice for sparse recovery, for a wide class of instances. 

Numerical tests have been carried out on a dataset freely available online and fully described in \cite{AAVV_Solving_Basis_Pursuit-Heuristic_Optimality_Check_and_Solver_Comparison}. For brevity, we refer to the cited article for details about the $548$ matrices that make up the database. 
It contains many instances satisfying the ERC \eqref{eq:ERC}, thus representing a favorable situation for compressed sensing in terms of recoverability of sparse solutions via $\ell_1$ minimization. It also includes instances not satisfying the ERC in order to assess solvers' performance outside the so-called $\ell_0-\ell_1$ equivalence.
In our test we have used $444$ instances over $548$, since we restrict our investigation to dense matrices from this dataset. In fact, dealing with sparse matrices would require a dedicated implementation of LHDM which can be addressed in the future. In order to simplify the comparison of the results here presented with the ones in \cite{AAVV_Solving_Basis_Pursuit-Heuristic_Optimality_Check_and_Solver_Comparison}, 
we keep the same indexing from $1$ to $548$ to show the results. Results concerning sparse instances are left blank. Instances satisfying the ERC are indexed in two intervals $[1,200]$ and $[275,474]$. This helps the understanding of the Figures.

We used the positivity trick presented in \eqref{eq:positivity_trick} in order to achieve an arbitrary signed solution by means of LHDM, which succeeded in exact recovery for all tests verifying the ERC, meaning that the solution with smallest support was found.

We used a fixed choice of the thresholds used in LHDM, that is $\tau_1=0.6, \delta=0.9, \tau_2=0.15$ and $k_{\max}=32$, that are the same values used in \cite{DMa21}, hence giving evidence of the wide applicability of the method proposed without the need of choosing problem dependent parameters.
However, a fine tuning of the parameters can give slight improvements in specific problem families. For example, for very well-conditioned matrices, $\delta$ can be substantially smaller, to get a more efficient choice of columns. This fine tuning is out of the scope of this paper since here we want to exploit the general applicability of the method.

The results are grouped into subsections, each devoted to a single experimental aspect.

\subsection{Efficiency of the “sign flip"}

We have seen in sec. \ref{sec:positivity-trick} that, in the case of the positivity trick, the inner loop of LH can be substituted by a much simpler “sign flip" of the involved R columns. In Figure \ref{fig:results_1} we see in LH the difference between running the inner loop and the sign-flip: there is an interesting speedup, around $1.5\times$, for the more expensive tests.

\begin{figure}[H]
\centering
    \begin{tabular}{ccc}
      \includegraphics[height=0.22\textwidth]{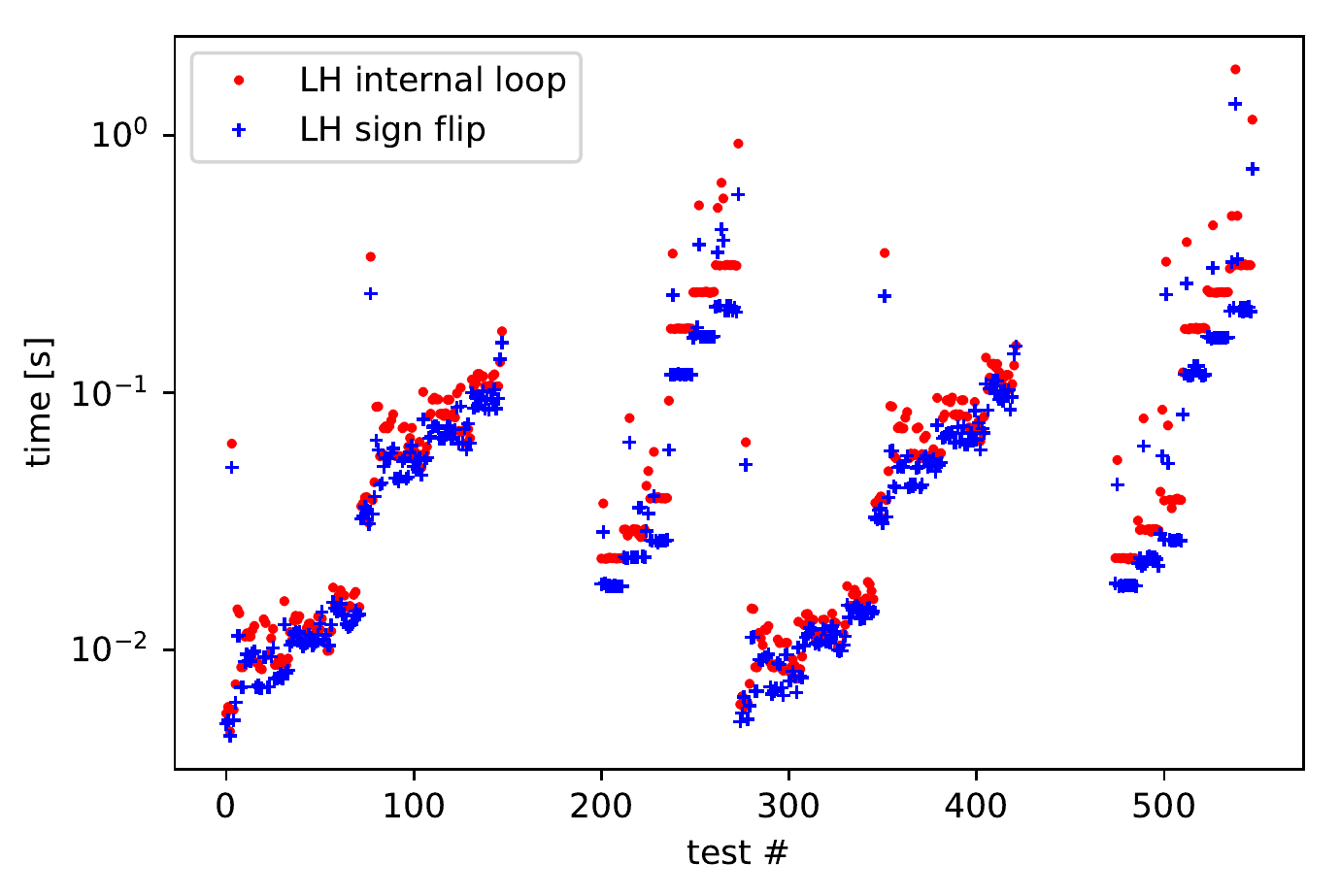}&
      \includegraphics[height=0.22\textwidth]{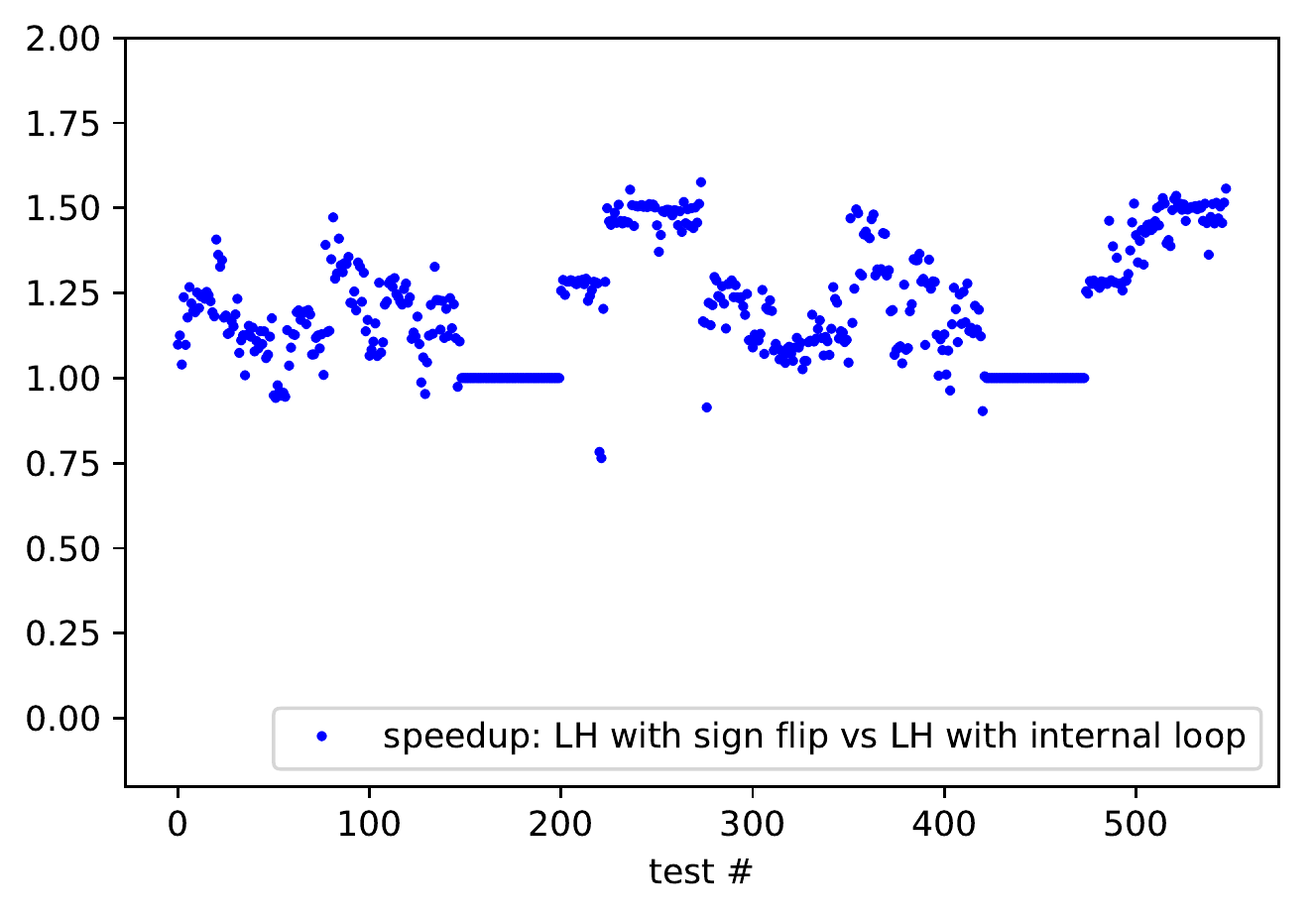}&
      \includegraphics[height=0.22\textwidth]{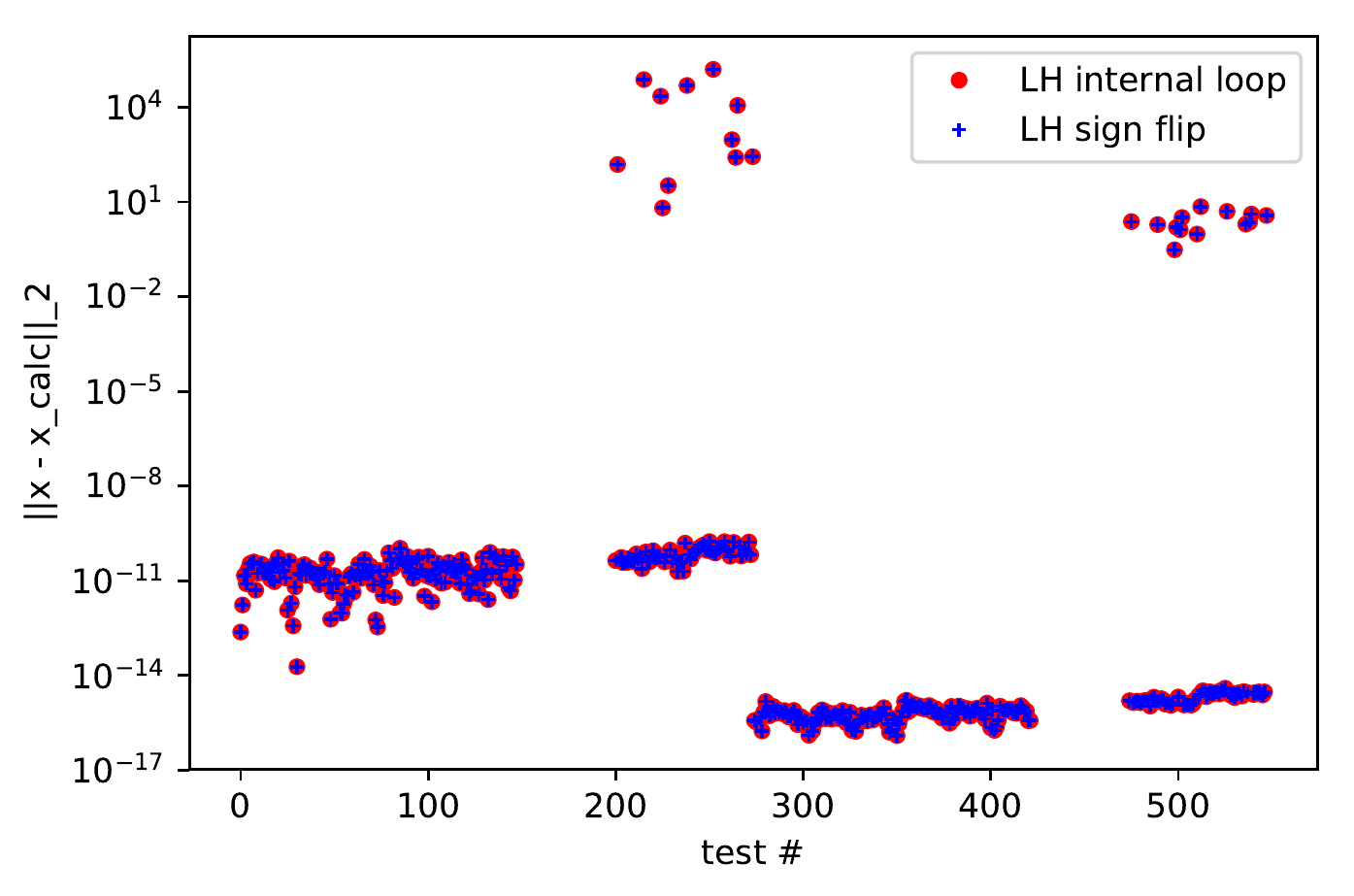}
    \end{tabular}
      \caption{Execution times (left), speedup of LH with sign flip vs LH with internal loop (center) and distance to optimum $|| \mathbf{x} - \mathbf{x}_{calc} ||_2$ (right), for the Lawson-Hanson (LH) algorithm, with the classic internal loop and with its replacement of sec. \ref{sec:positivity-trick} (“sign flip"). The vector $\mathbf{x}_{calc}$ is the solution computed by the algorithm and $\mathbf{x}$ the true solution (unique, in ERC tests).} 
      \label{fig:results_1}
\end{figure}

\subsection{Speedup of LHDM versus LH}
\label{sec:speedup_LHDM_vs_LH}

In Figure \ref{fig:results_2} and \ref{fig:results_5} we see the speedup of LHDM with respect to LH, for solution vectors at different levels of sparsity. The original dataset of \cite{AAVV_Solving_Basis_Pursuit-Heuristic_Optimality_Check_and_Solver_Comparison} has instances with very sparse solution vectors; we created a few modified datasets, starting from the original, with random generated solutions at prescribed sparsity levels, and perturbing matrices to satisfy the ERC property at each instance.
As we can notice, the speedup varies a lot with the sparsity of the solution: the more the solution is less sparse, i.e. the number of nonzeros increases, the more LHDM accelerates on LH. The Figures shows the two extreme situations: the original database with very sparse solutions and a modified database with almost dense solution vectors.

\begin{figure}[H]
\centering
    \begin{tabular}{ccc}
      \includegraphics[height=0.22\textwidth]{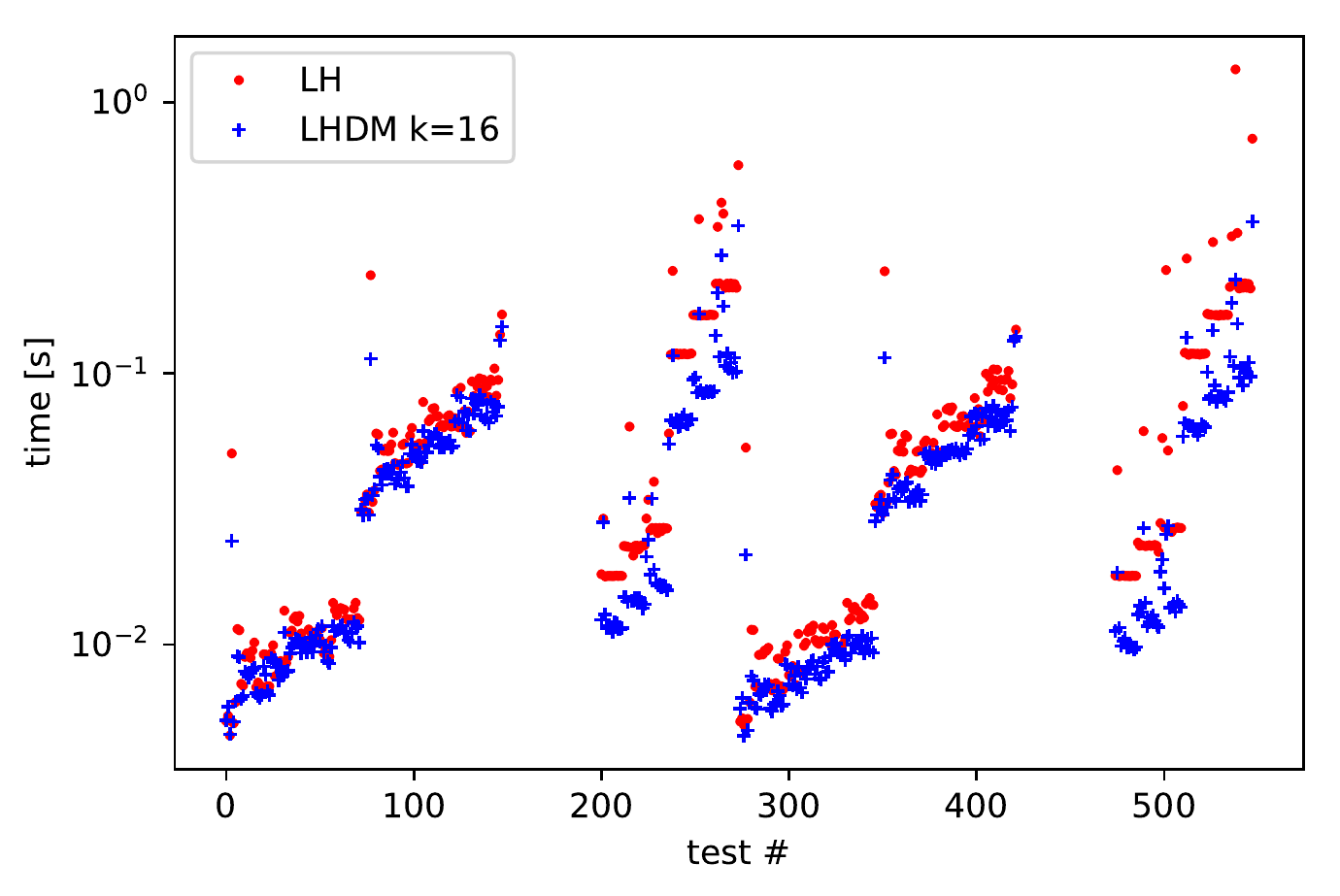}&
      \includegraphics[height=0.22\textwidth]{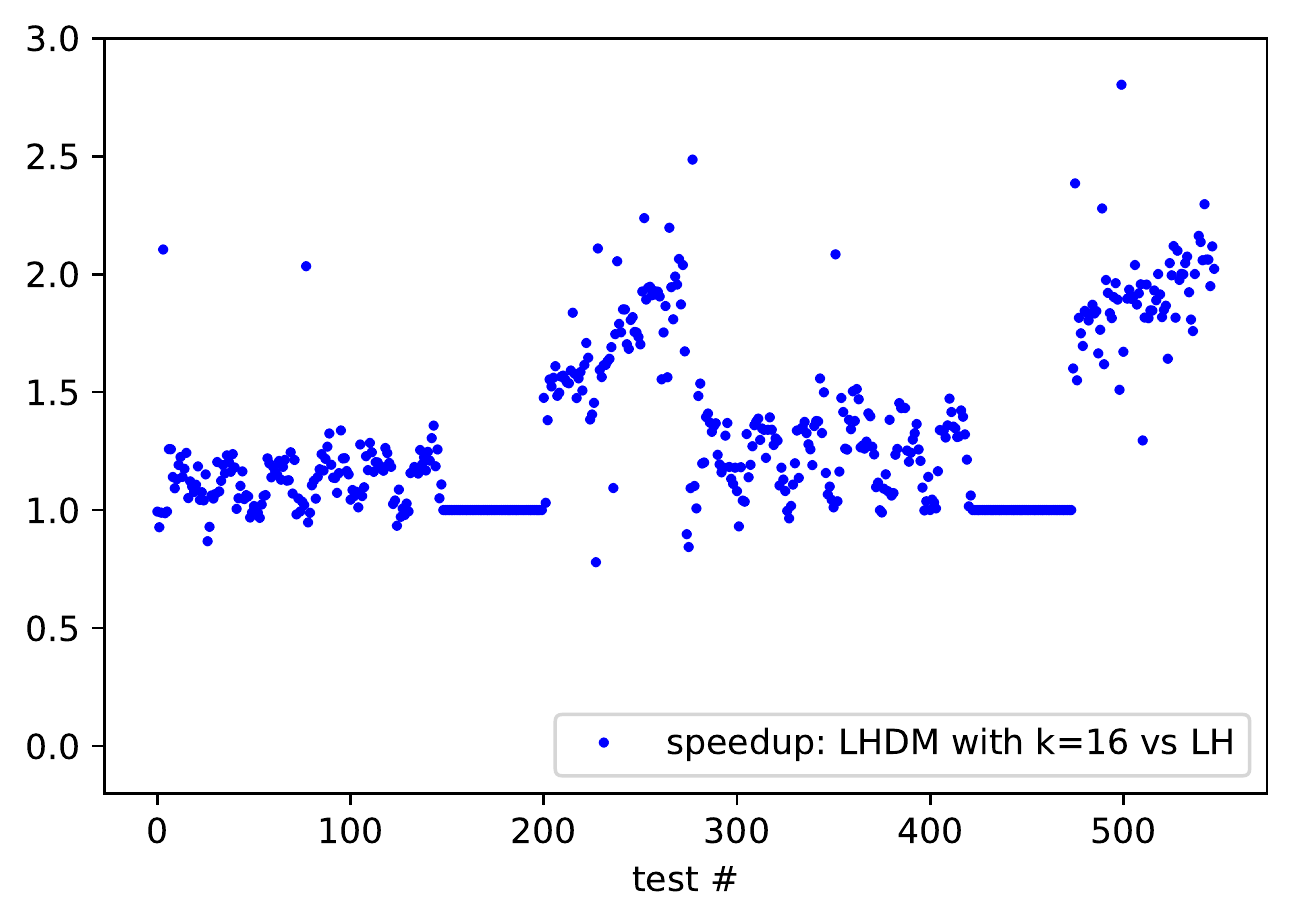}&
      \includegraphics[height=0.22\textwidth]{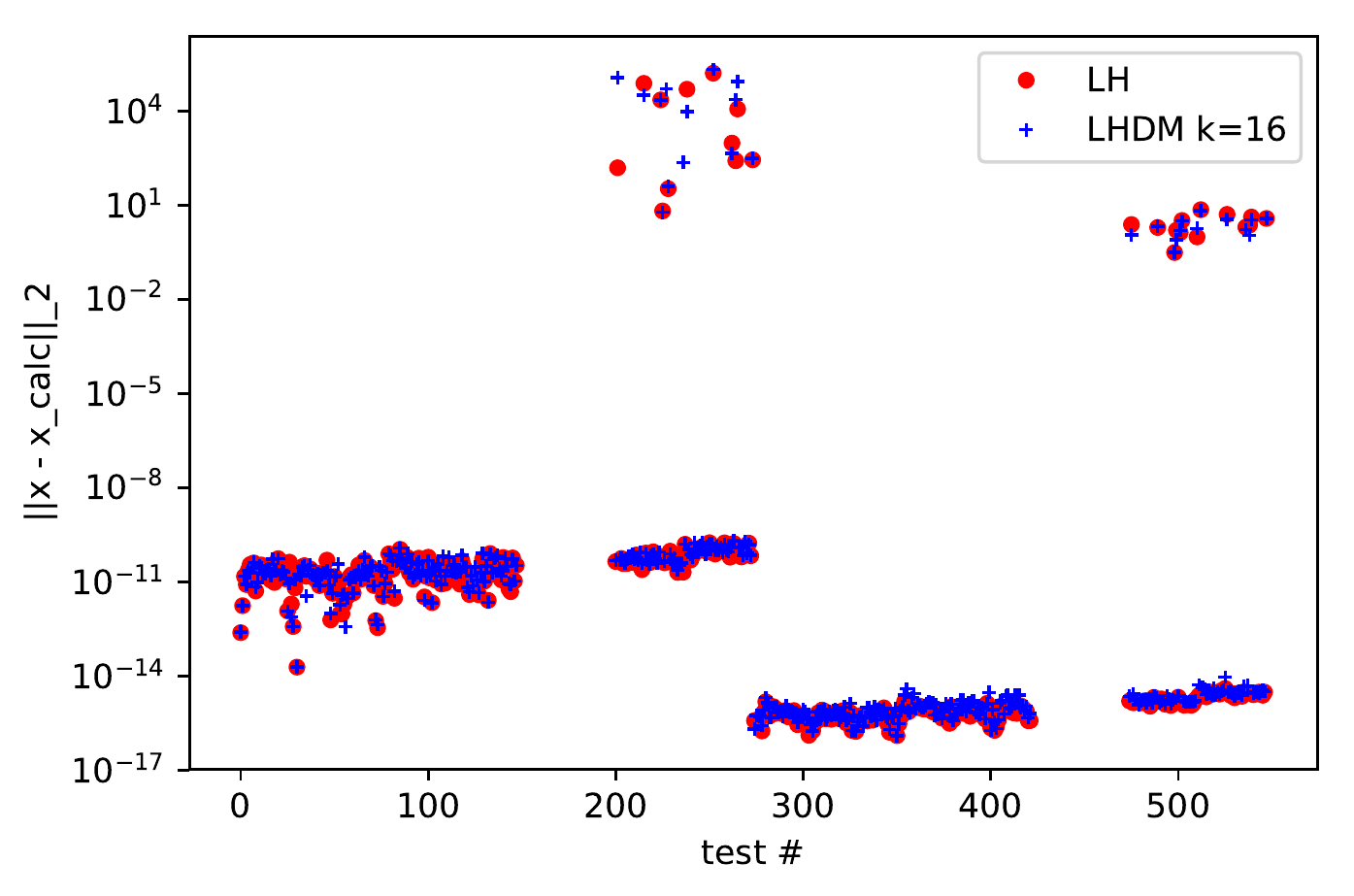}
    \end{tabular}
      \caption{Execution times (left), speedup of LHDM vs LH (center) and distance to optimum $|| \mathbf{x} - \mathbf{x}_{calc} ||_2$ (right), for the original dataset, whose instances have very sparse solution vectors. The vector $\mathbf{x}_{calc}$ is the solution computed by the algorithm and $\mathbf{x}$ the true solution (unique, in ERC tests).} 
      \label{fig:results_2}
\end{figure}

\begin{figure}[H]
\centering
    \begin{tabular}{ccc}
      \includegraphics[height=0.22\textwidth]{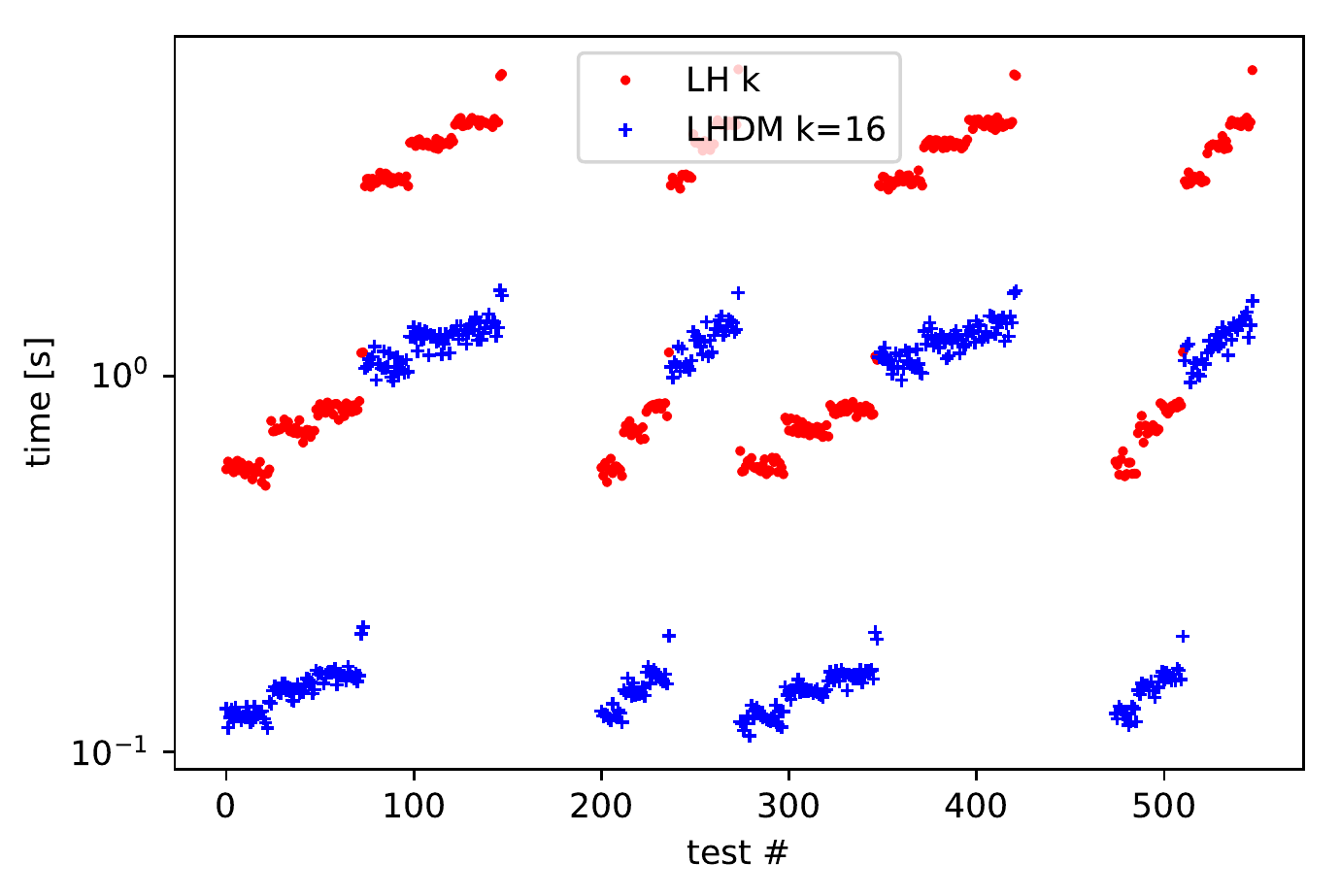}&
      \includegraphics[height=0.22\textwidth]{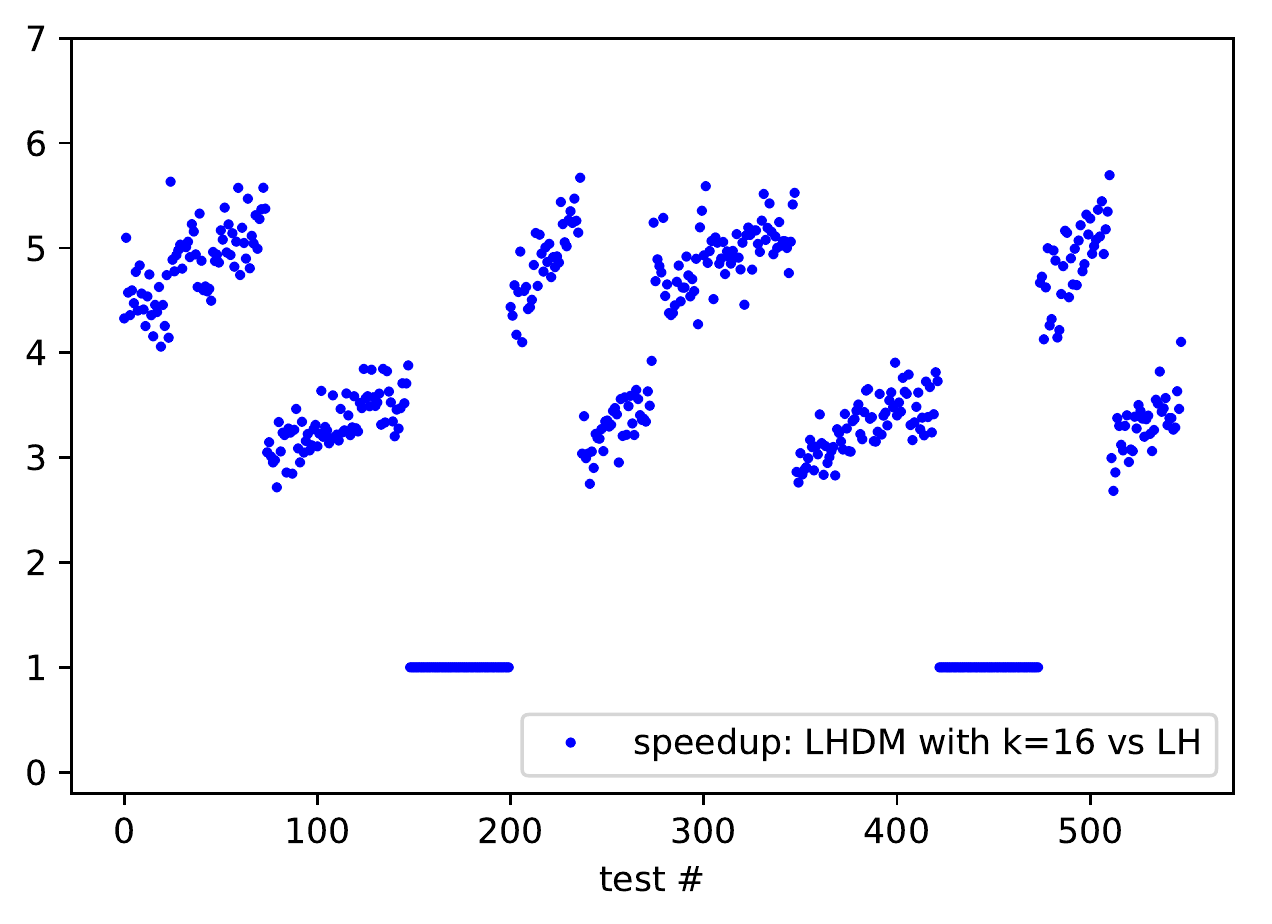}&
      \includegraphics[height=0.22\textwidth]{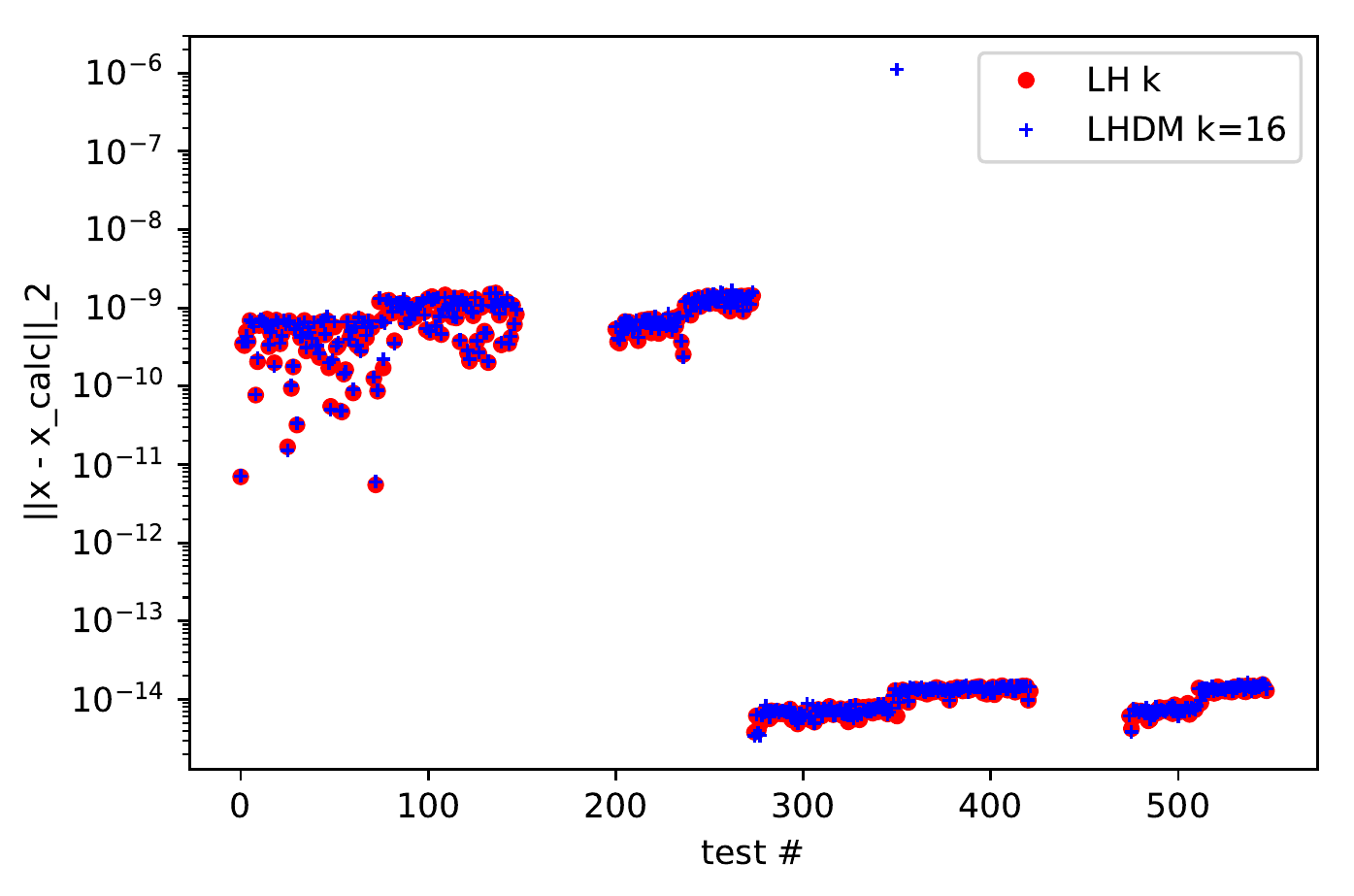}
    \end{tabular}
      \caption{Execution times (left), speedup of LHDM vs LH (center) and distance to optimum $|| \mathbf{x} - \mathbf{x}_{calc} ||_2$ (right), for the modified dataset, whose instances have almost dense solution vectors. The vector $\mathbf{x}_{calc}$ is the solution computed by the algorithm and $\mathbf{x}$ the true solution (unique, in ERC tests).} 
    \label{fig:results_5}
\end{figure}

\subsection{Comparison with the other convex relaxation solvers}

In Figure \ref{fig:comparison_database_1} we see the comparison of LHDM with the other convex relaxation solvers listed at the beginning of this section.

\begin{figure}[H]
\centering
    \begin{tabular}{ccc}
      \includegraphics[height=0.22\textwidth]{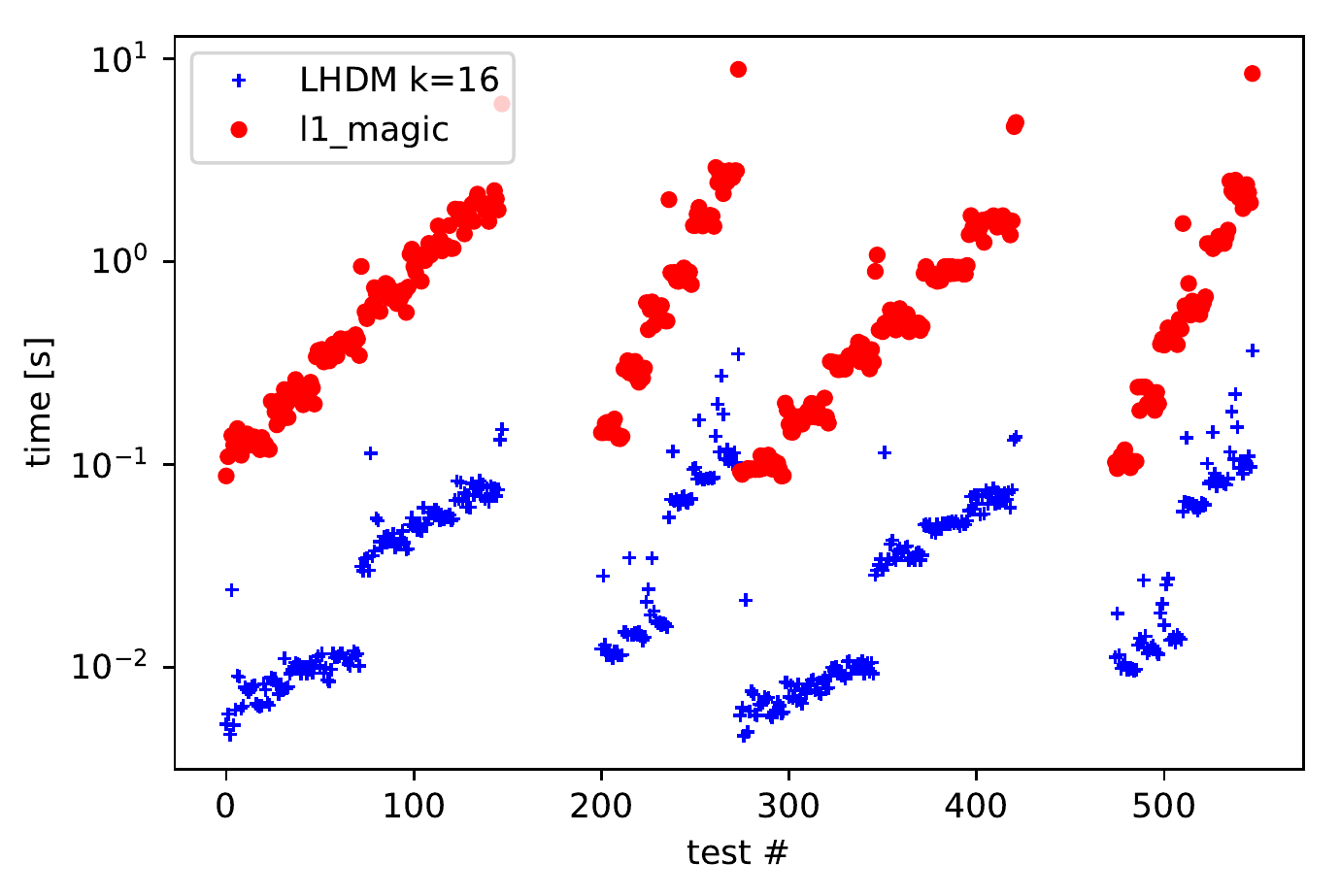}&
      \includegraphics[height=0.22\textwidth]{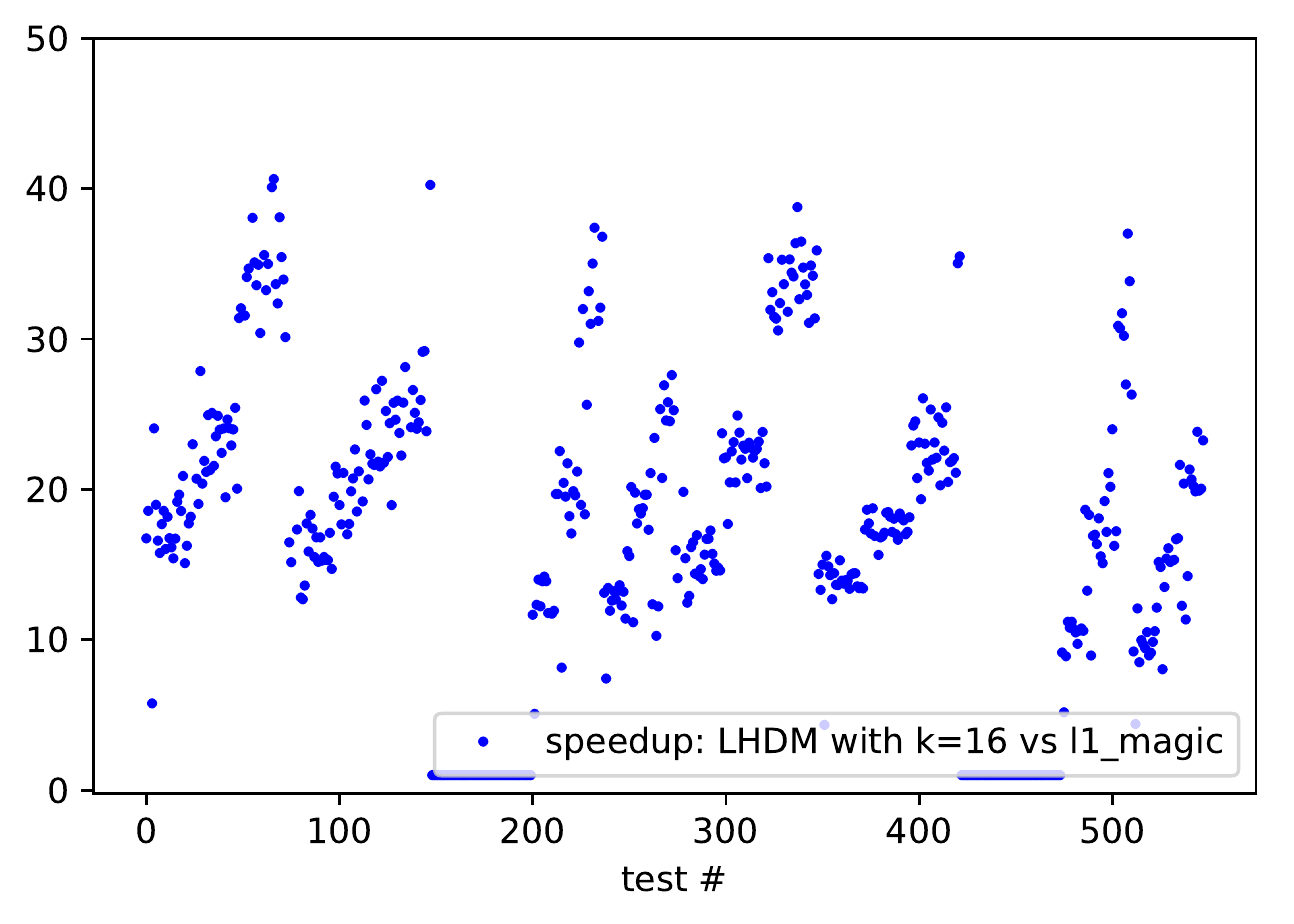}&
      \includegraphics[height=0.22\textwidth]{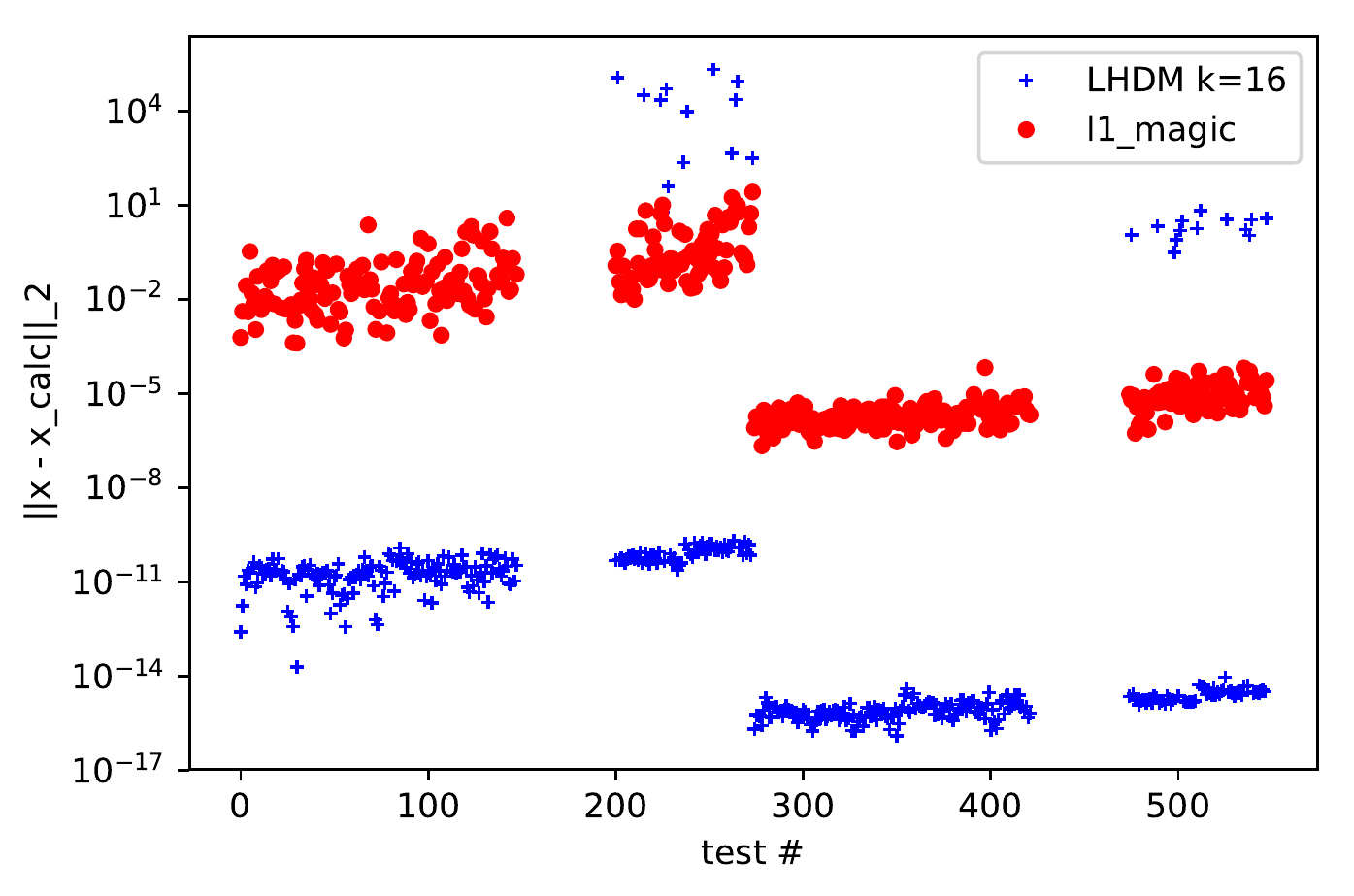}\\
      \includegraphics[height=0.22\textwidth]{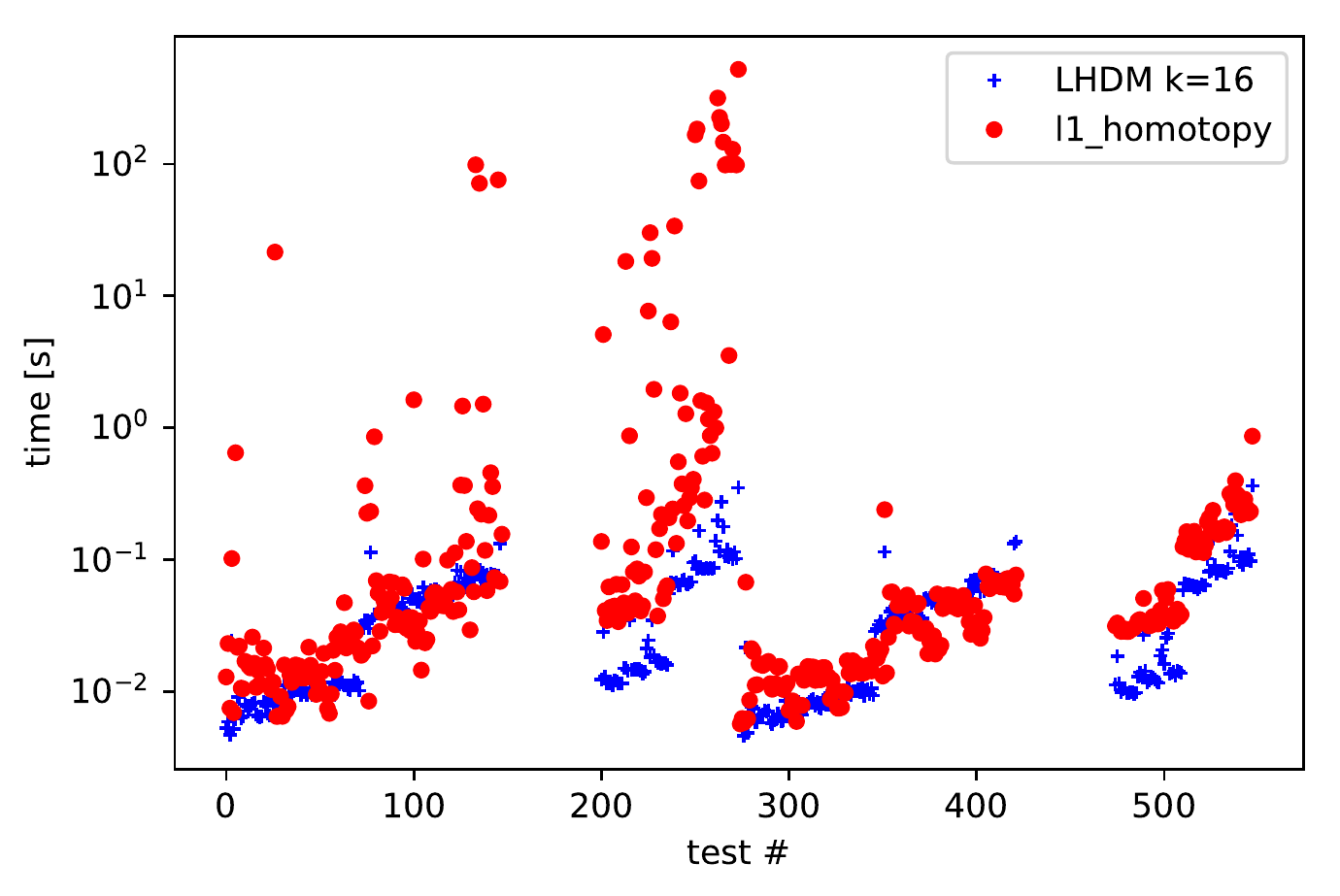}&
      \includegraphics[height=0.22\textwidth]{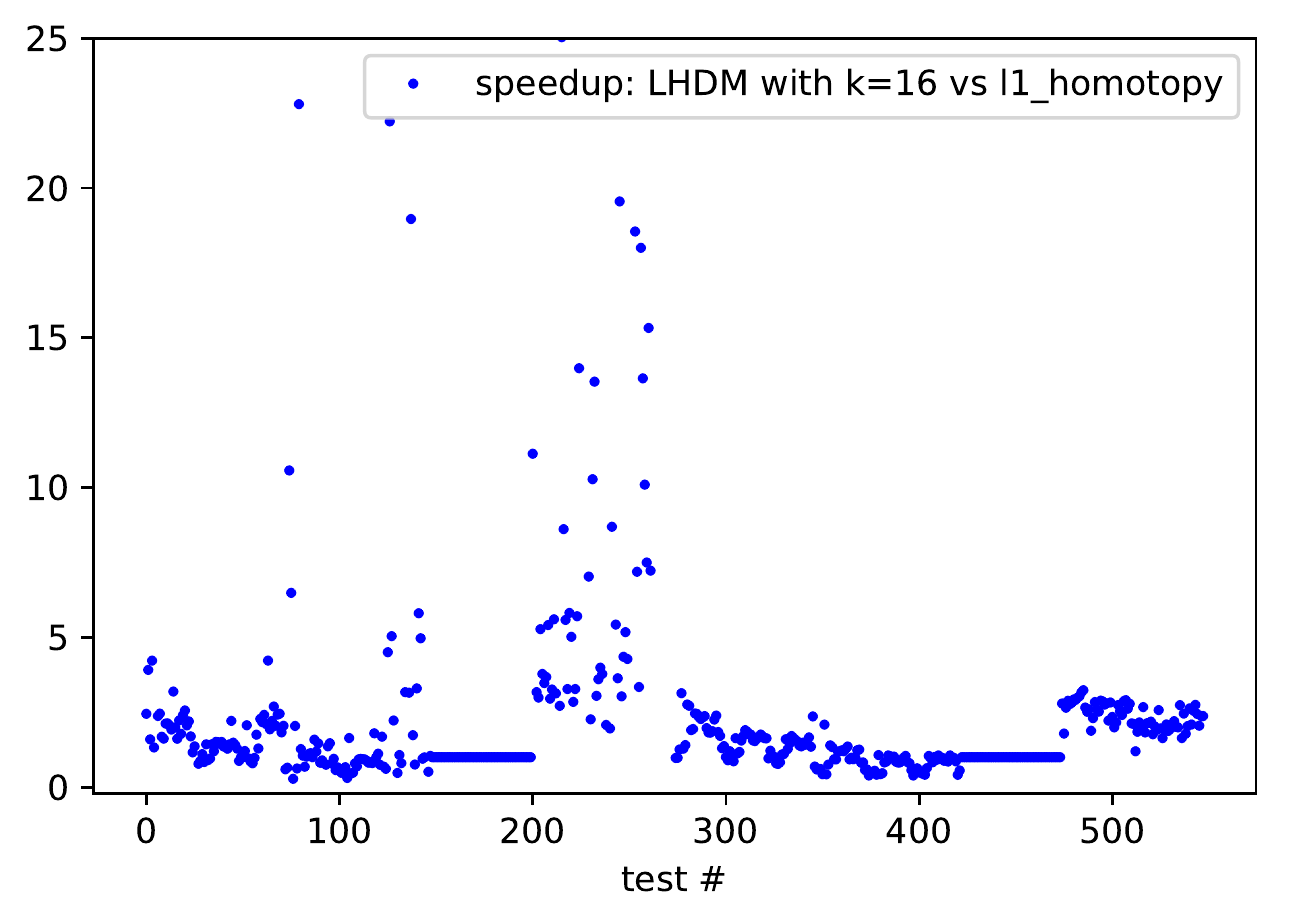}&
      \includegraphics[height=0.22\textwidth]{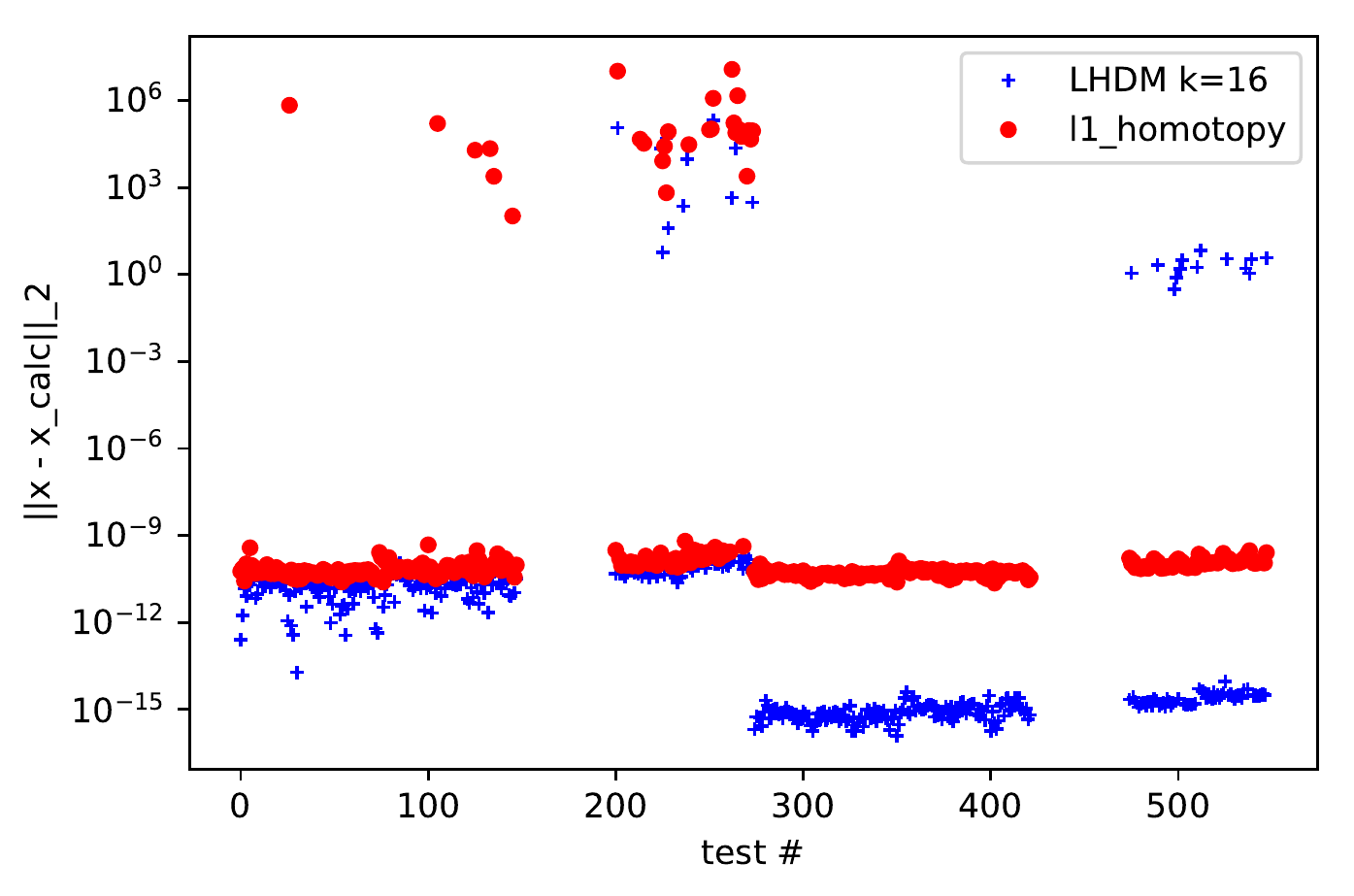}\\
      \includegraphics[height=0.22\textwidth]{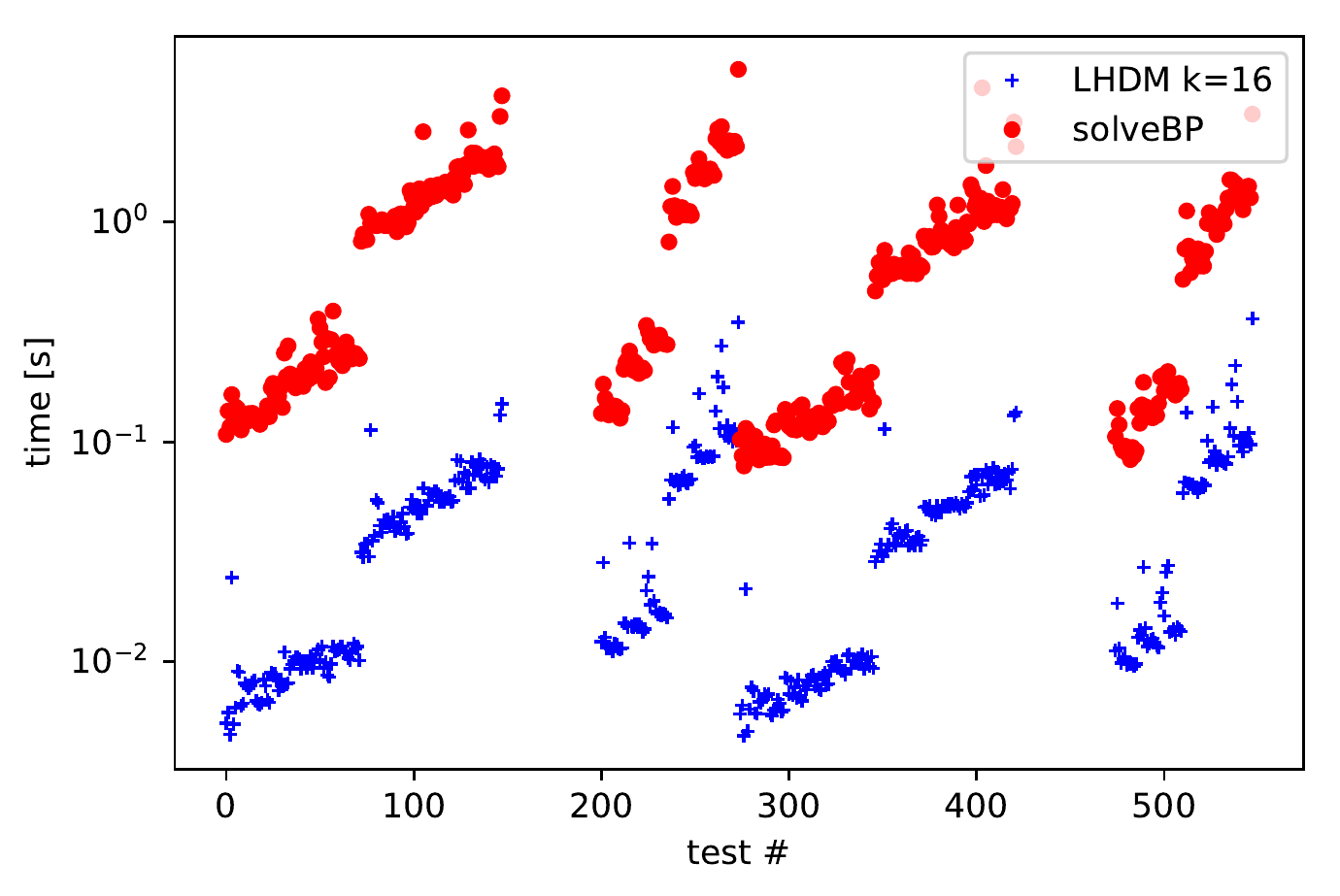}&
      \includegraphics[height=0.22\textwidth]{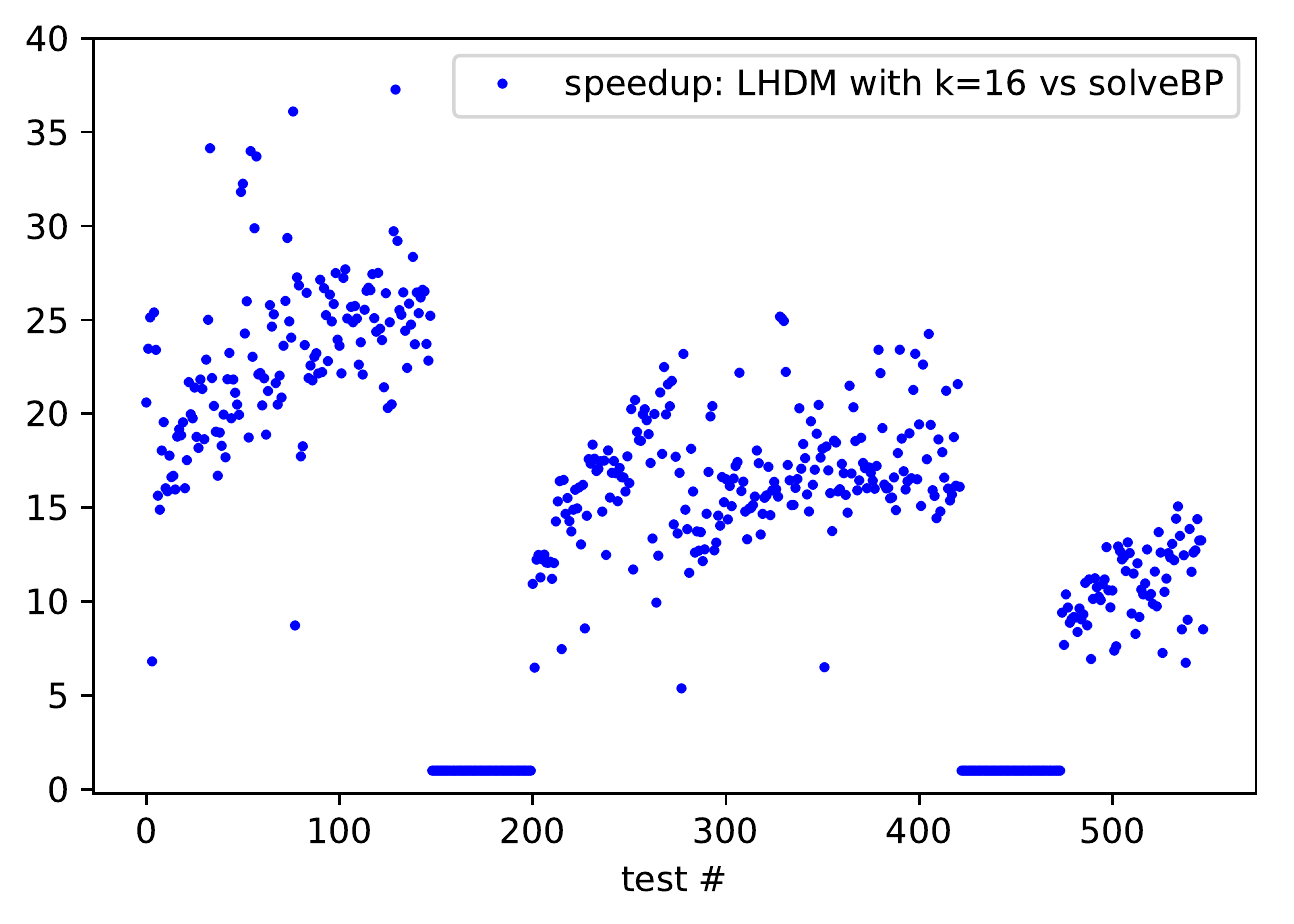}&
      \includegraphics[height=0.22\textwidth]{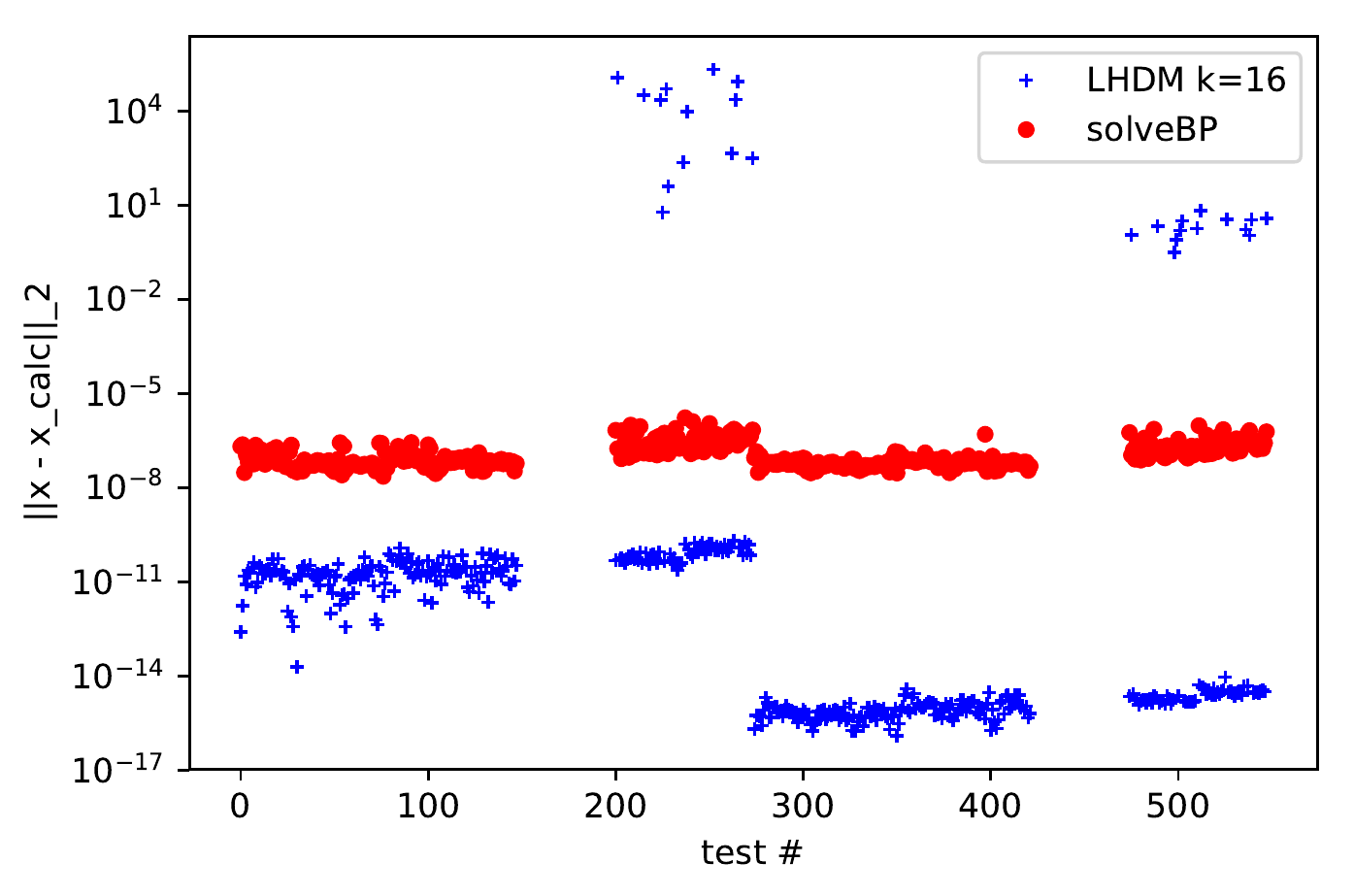}\\
      \includegraphics[height=0.22\textwidth]{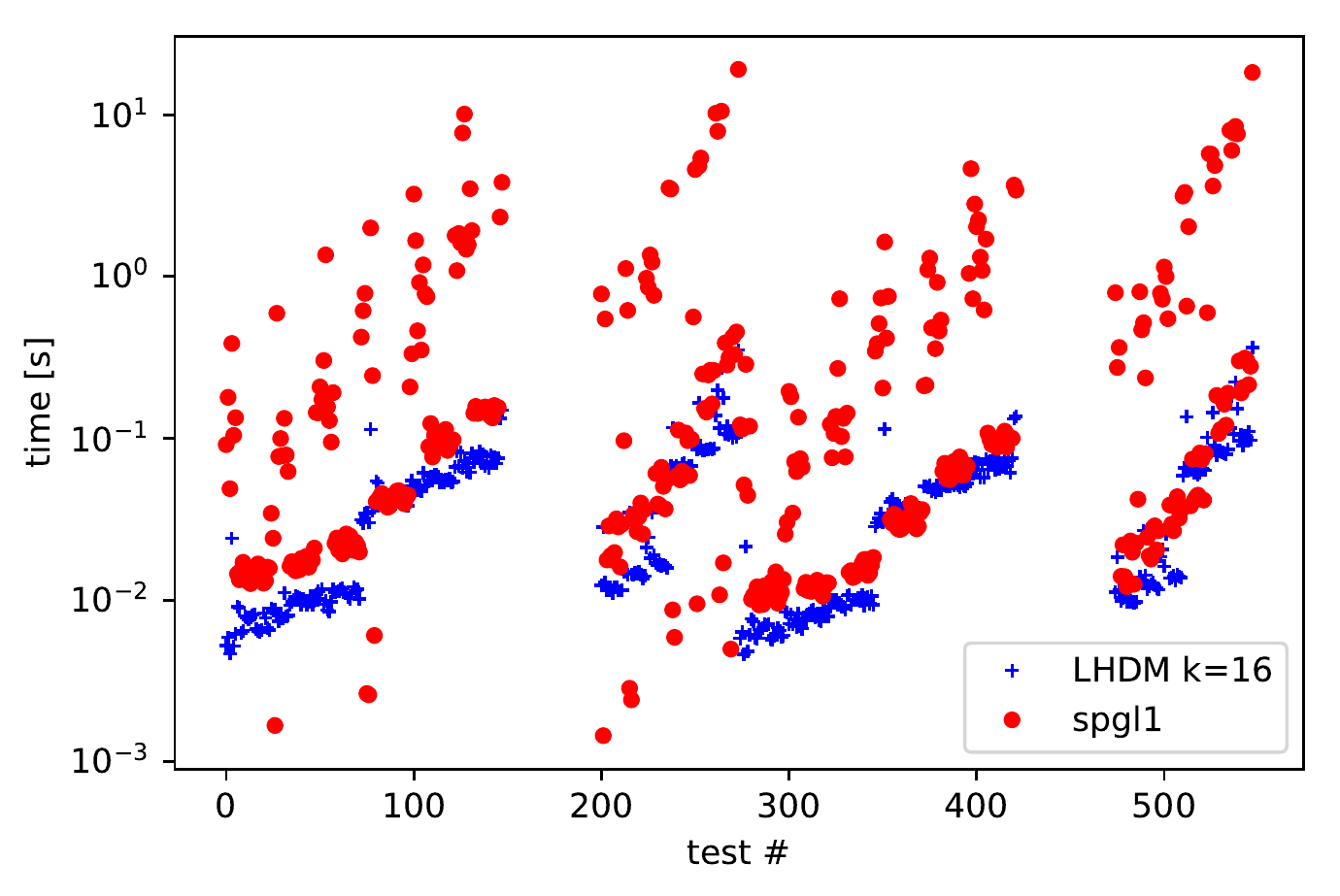}&
      \includegraphics[height=0.22\textwidth]{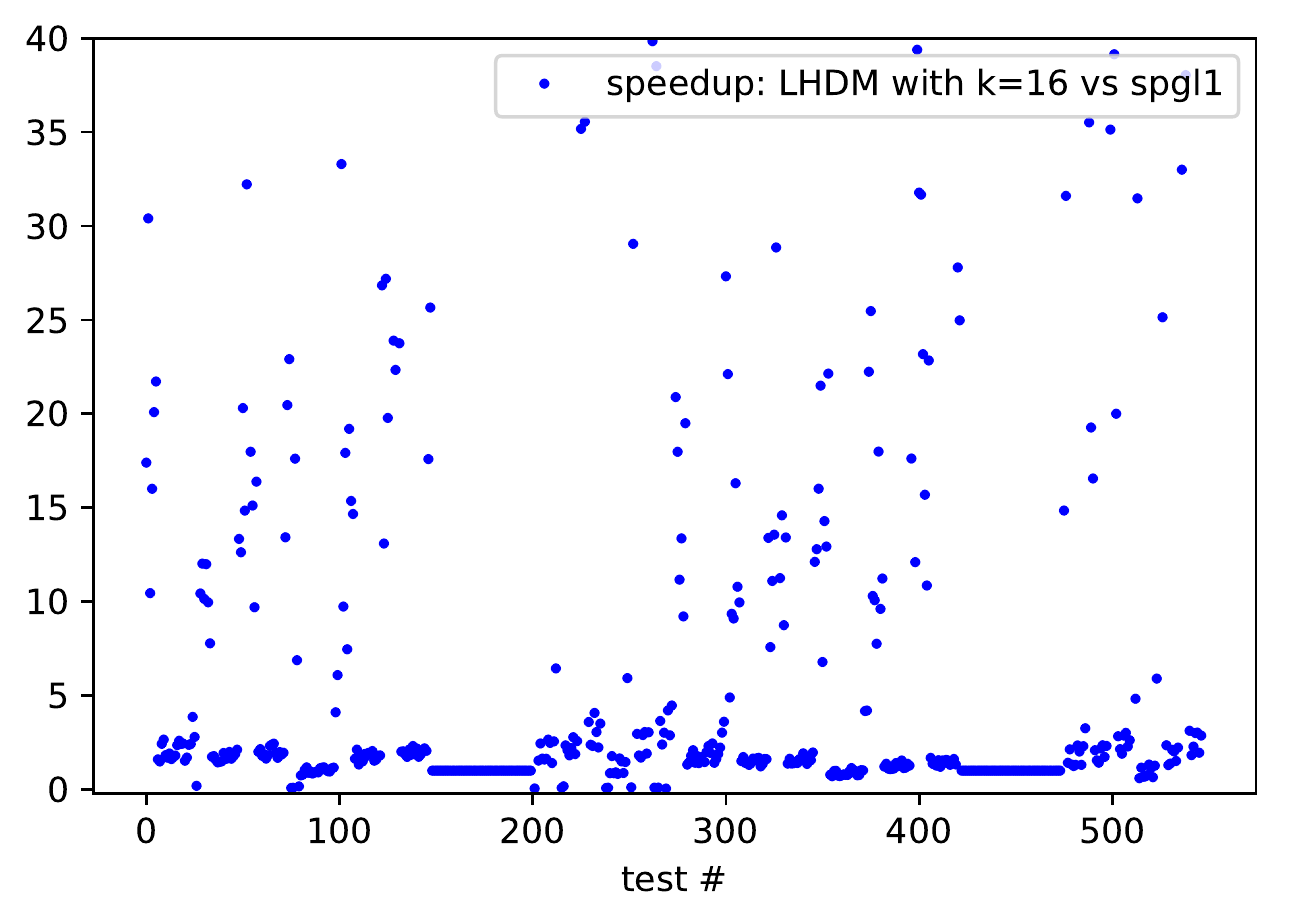}&
      \includegraphics[height=0.22\textwidth]{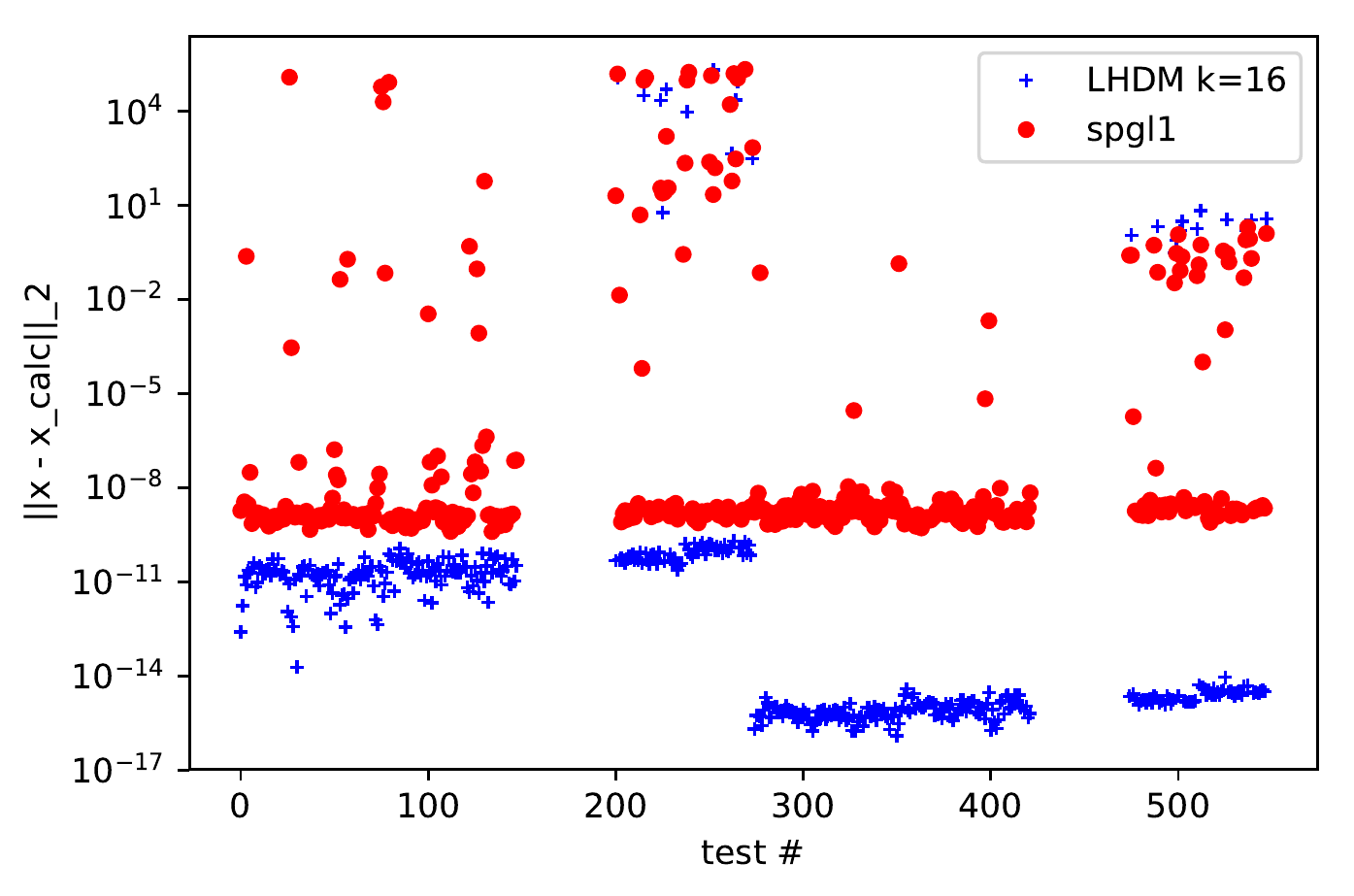}\\
      \includegraphics[height=0.22\textwidth]{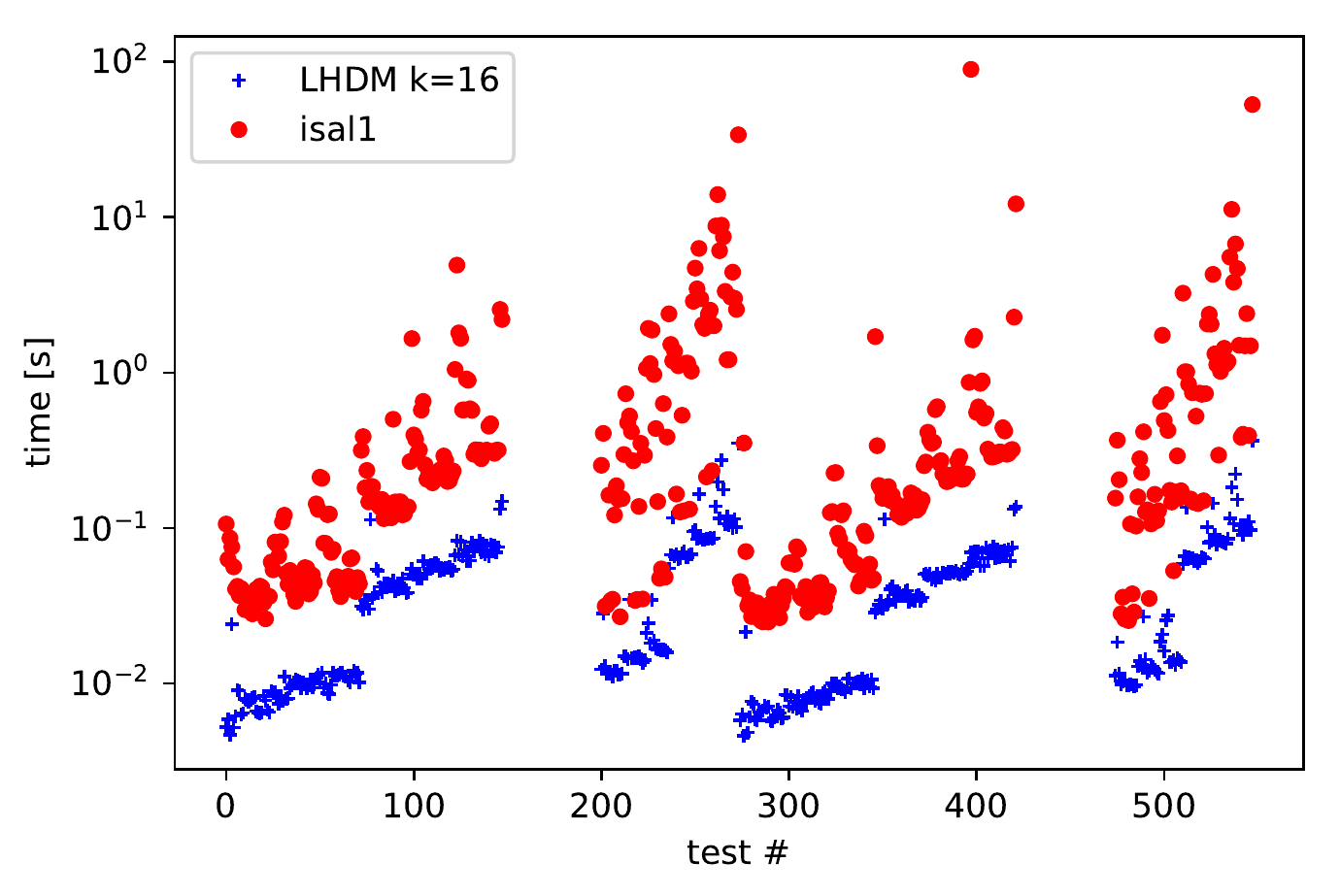}&
      \includegraphics[height=0.22\textwidth]{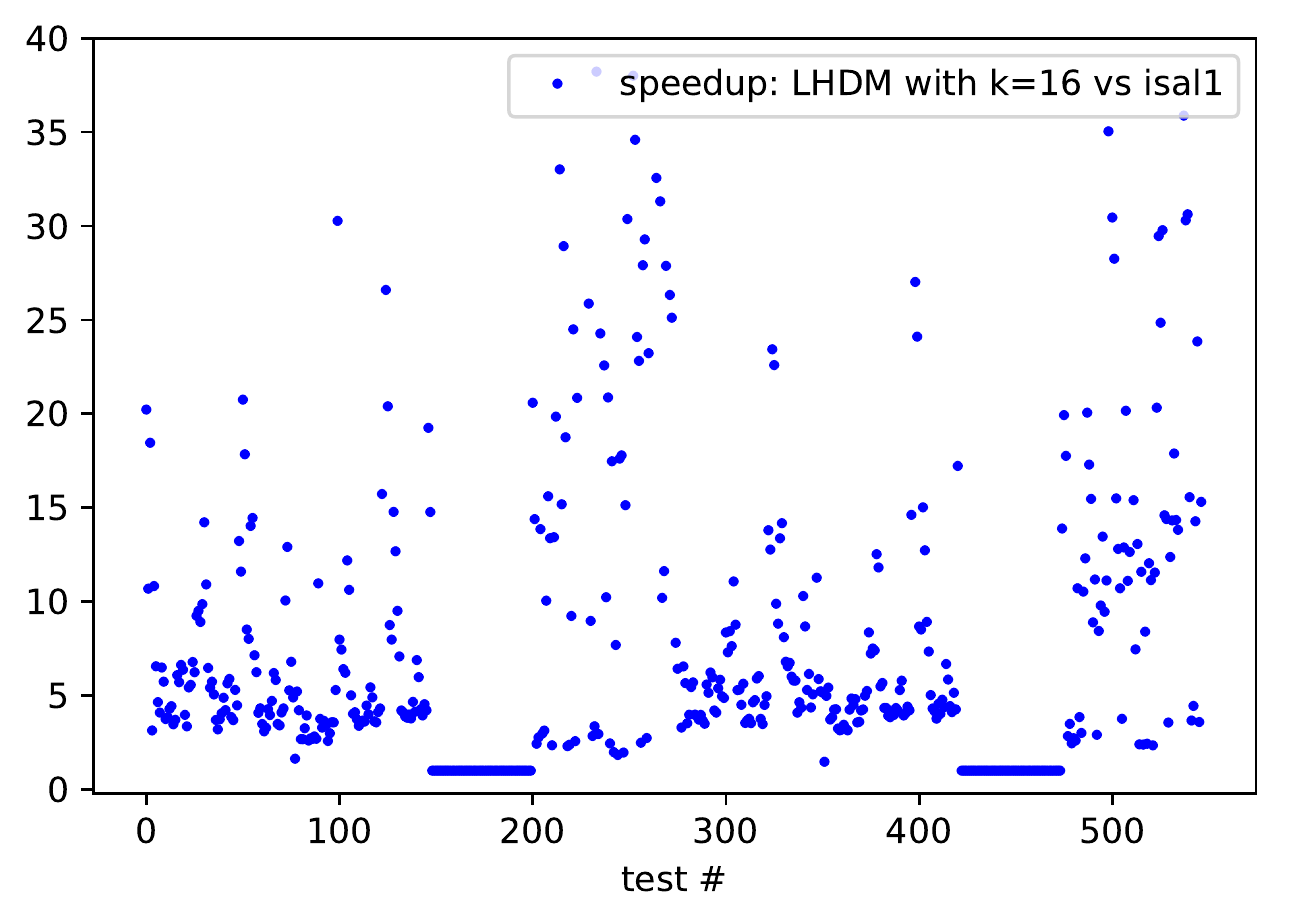}&
      \includegraphics[height=0.22\textwidth]{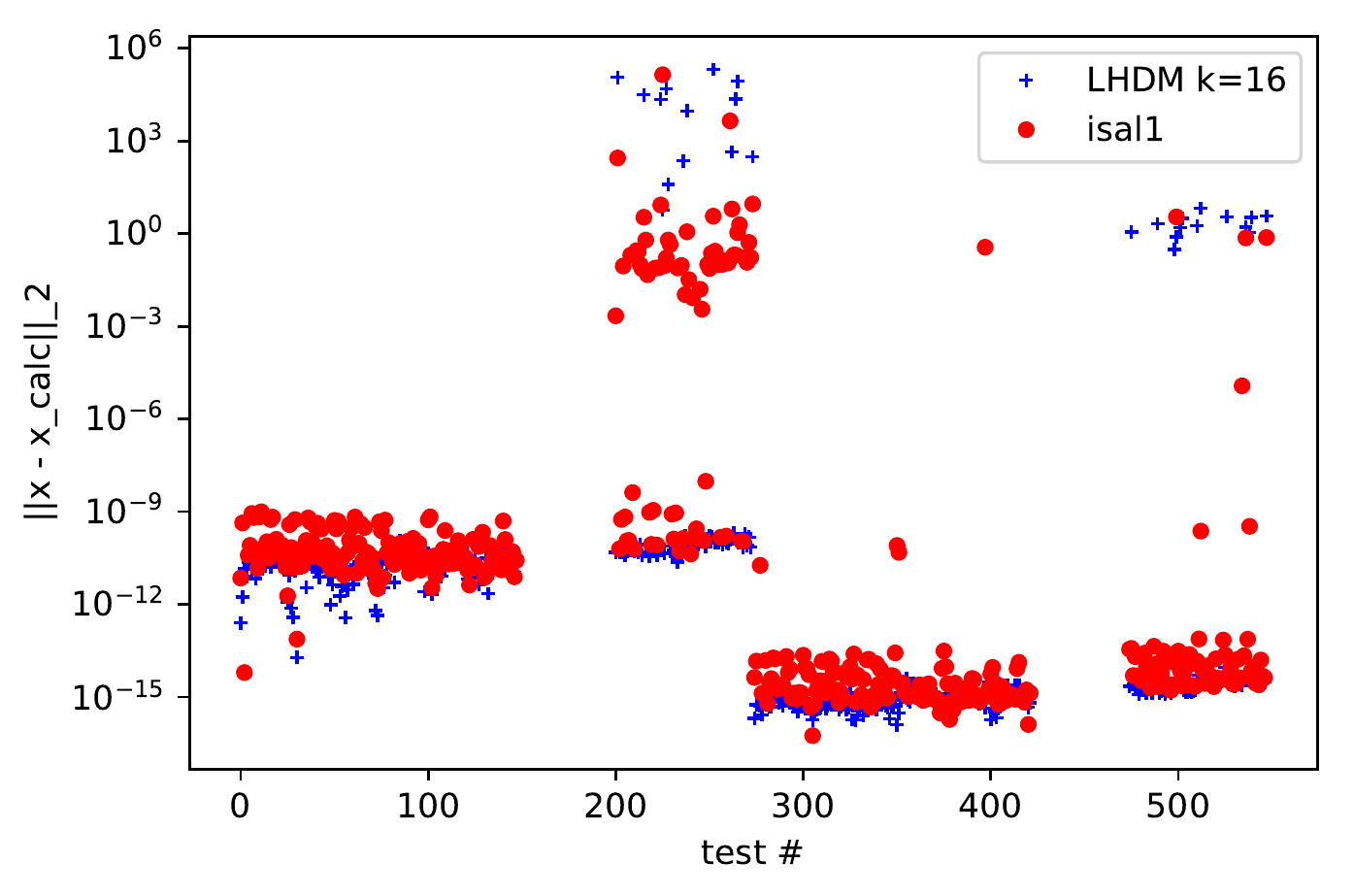}\\
      \includegraphics[height=0.22\textwidth]{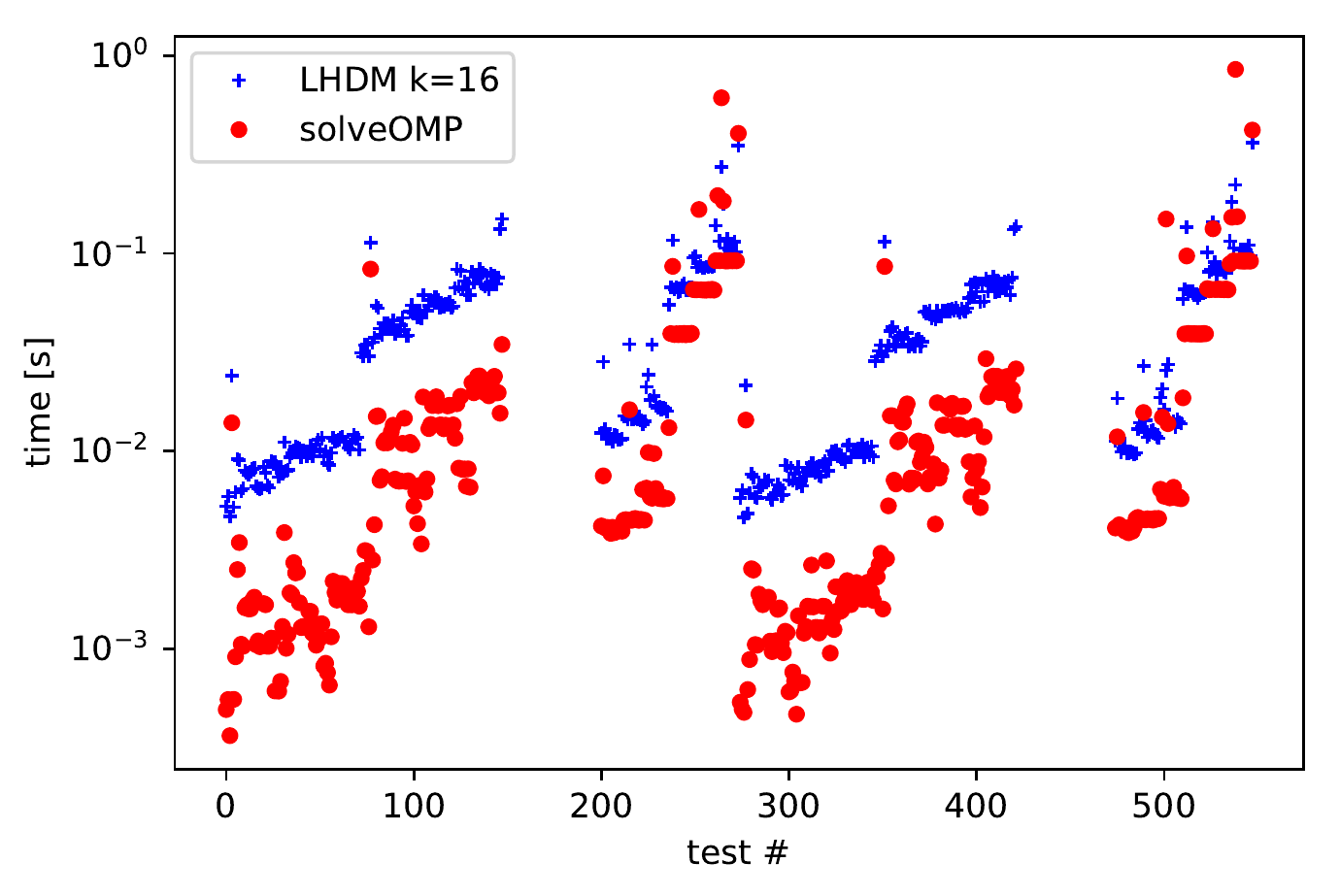}&
      \includegraphics[height=0.22\textwidth]{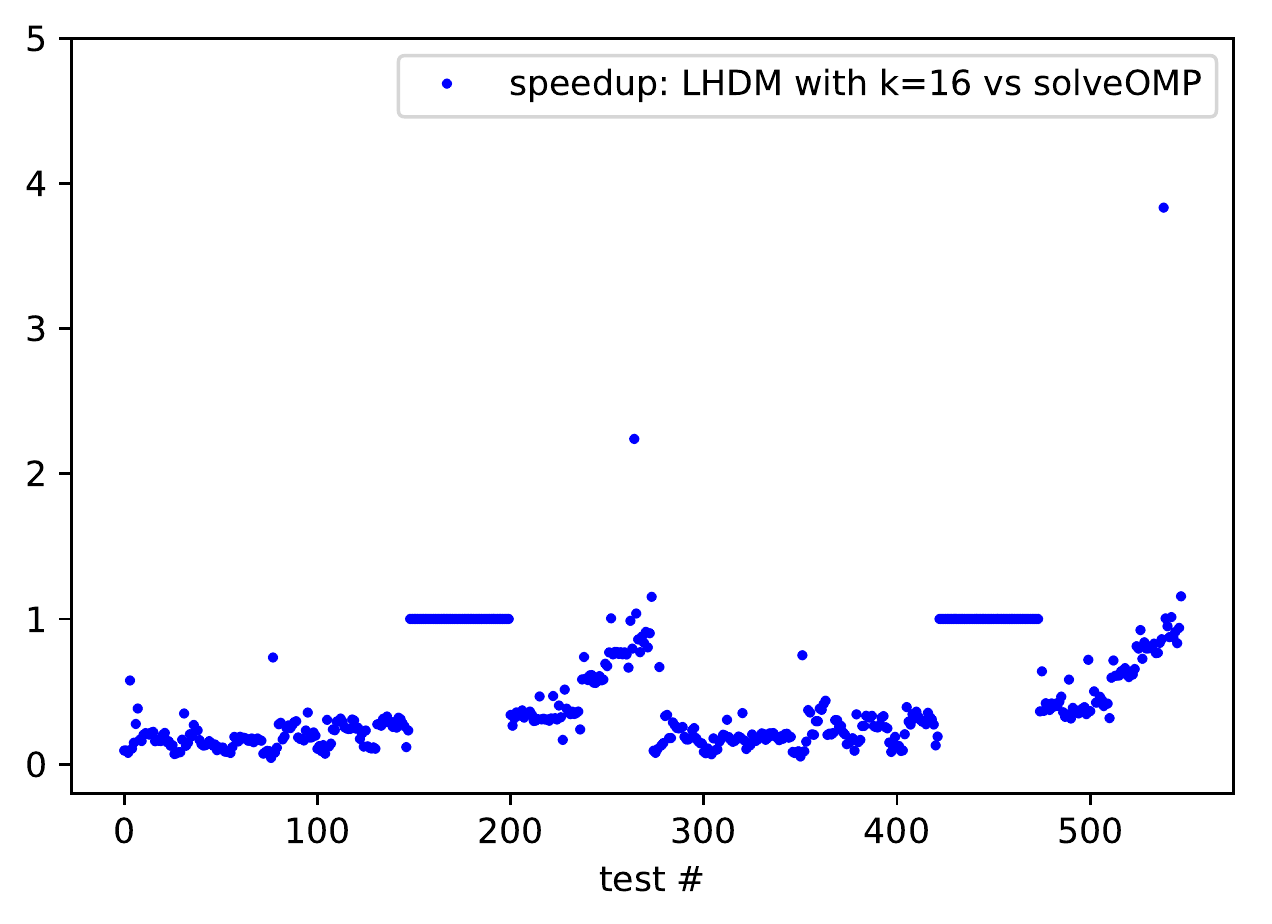}&
      \includegraphics[height=0.22\textwidth]{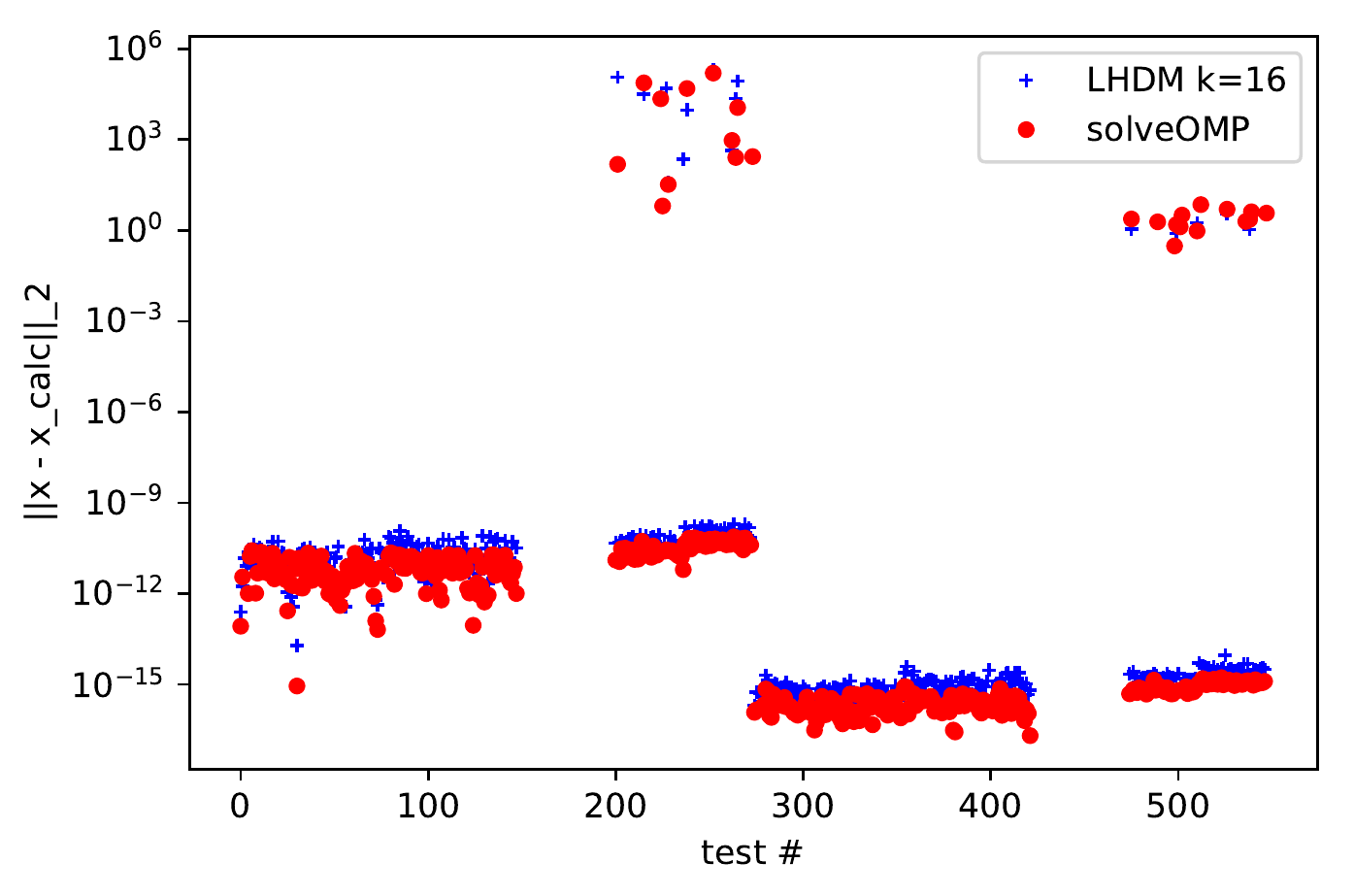}
    \end{tabular}
      \caption{Execution times (left), speedup of LHDM vs the other convex relaxation solvers listed at the beginning of this section (center) and distance to optimum $|| \mathbf{x} - \mathbf{x}_{calc} ||_2$ (right). Each row of plots corresponds to a convex relaxation solver, in this order (from the top): $l1\_magic$, $l1\_homotopy$, $solveBP$, $spgl1$, $isal1$ and $solveOMP$. The vector $\mathbf{x}_{calc}$ is the solution computed by the algorithm and $\mathbf{x}$ the true solution (unique, in ERC tests).} 
      \label{fig:comparison_database_1}
\end{figure}

We can notice that LHDM is a good performer except versus solveOMP (an implementation of the Orthogonal Matching Pursuit), that surprisingly has not been considered in \cite{AAVV_Solving_Basis_Pursuit-Heuristic_Optimality_Check_and_Solver_Comparison} for theoretical reasons, but actually outperforms all the other methods in the whole database considered. 
In the next subsection we will therefore go into a detailed comparison between LHDM and solveOMP.

\subsection{LHDM vs SolveOMP and the DM role}

To compare LHDM with SolveOMP we will perturb the database in three main directions: sparsity level, condition number of the system matrix, and nonnegativity of the solution.

Let us start with the first database modification. 
As already did in sec. \ref{sec:speedup_LHDM_vs_LH}, we created a few modified datasets, starting from the original, with random generated solutions at prescribed sparsity levels, and perturbing matrices to satisfy the ERC property at each instance.
As we can notice in Figure \ref{fig:results_4}, where a modified database with almost dense solution vectors is used, LHDM behaves quite better than SolveOMP now, with a substantial speedup (compare with Figure \ref{fig:comparison_database_1}, where the original database with very sparse solution vectors were used). If we consider also the speedup gain of LHDM vs LH with the same modified dataset, shown in Figure \ref{fig:results_5}, we can conclude that the performance gain of LHDM vs SolveOMP is due to the Deviation Maximization (DM) and not to LH alone. This is not surprising, since SolveOMP is based on Cholesky factorization, which is computationally lighter that the QR factorization on which LH is based. However, the QR is a more stable algorithm and it allows the DM to improve the performances considerably in a wide range of instances.

\begin{figure}[H]
\centering
    \begin{tabular}{ccc}
      \includegraphics[height=0.22\textwidth]{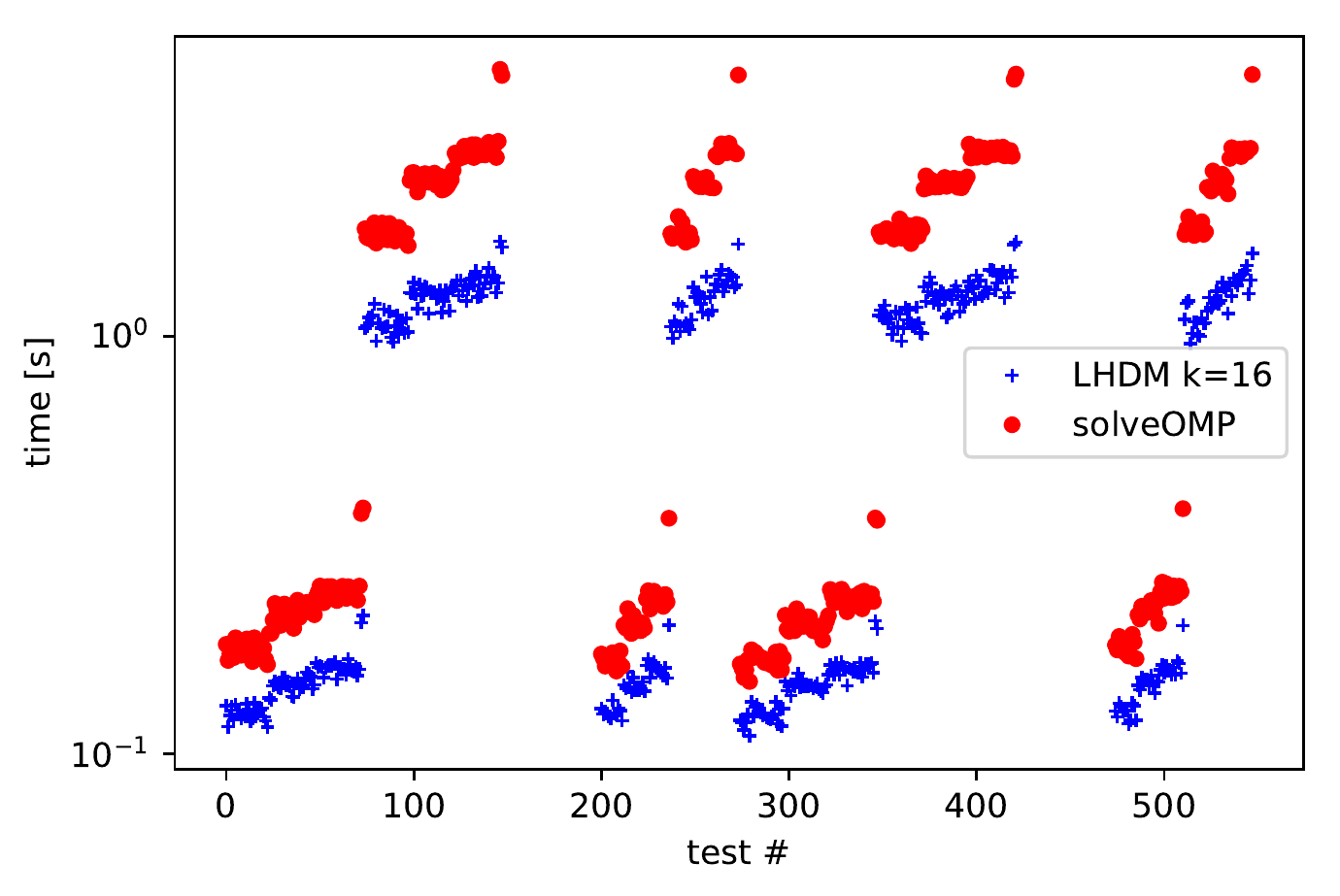}&
      \includegraphics[height=0.22\textwidth]{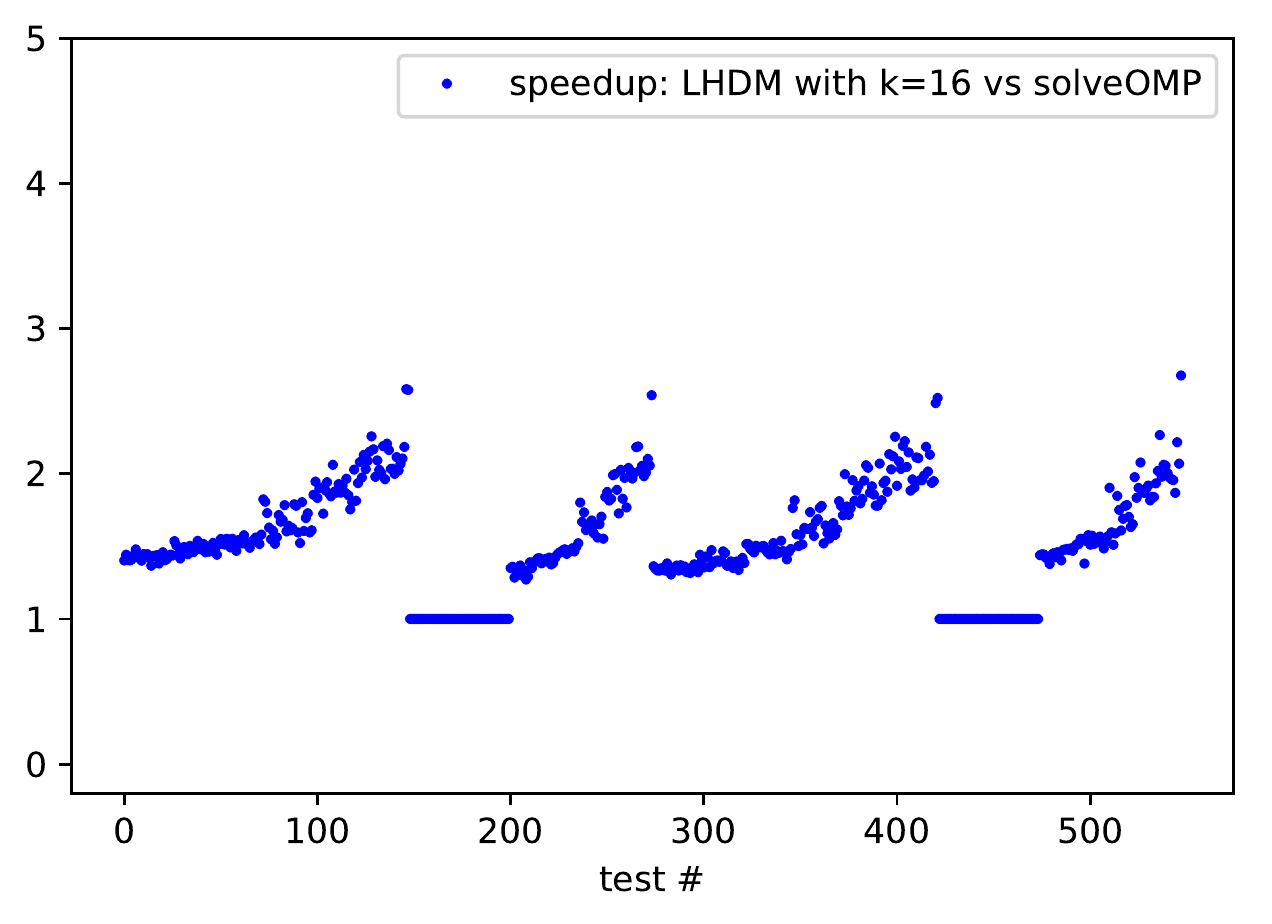}&
      \includegraphics[height=0.22\textwidth]{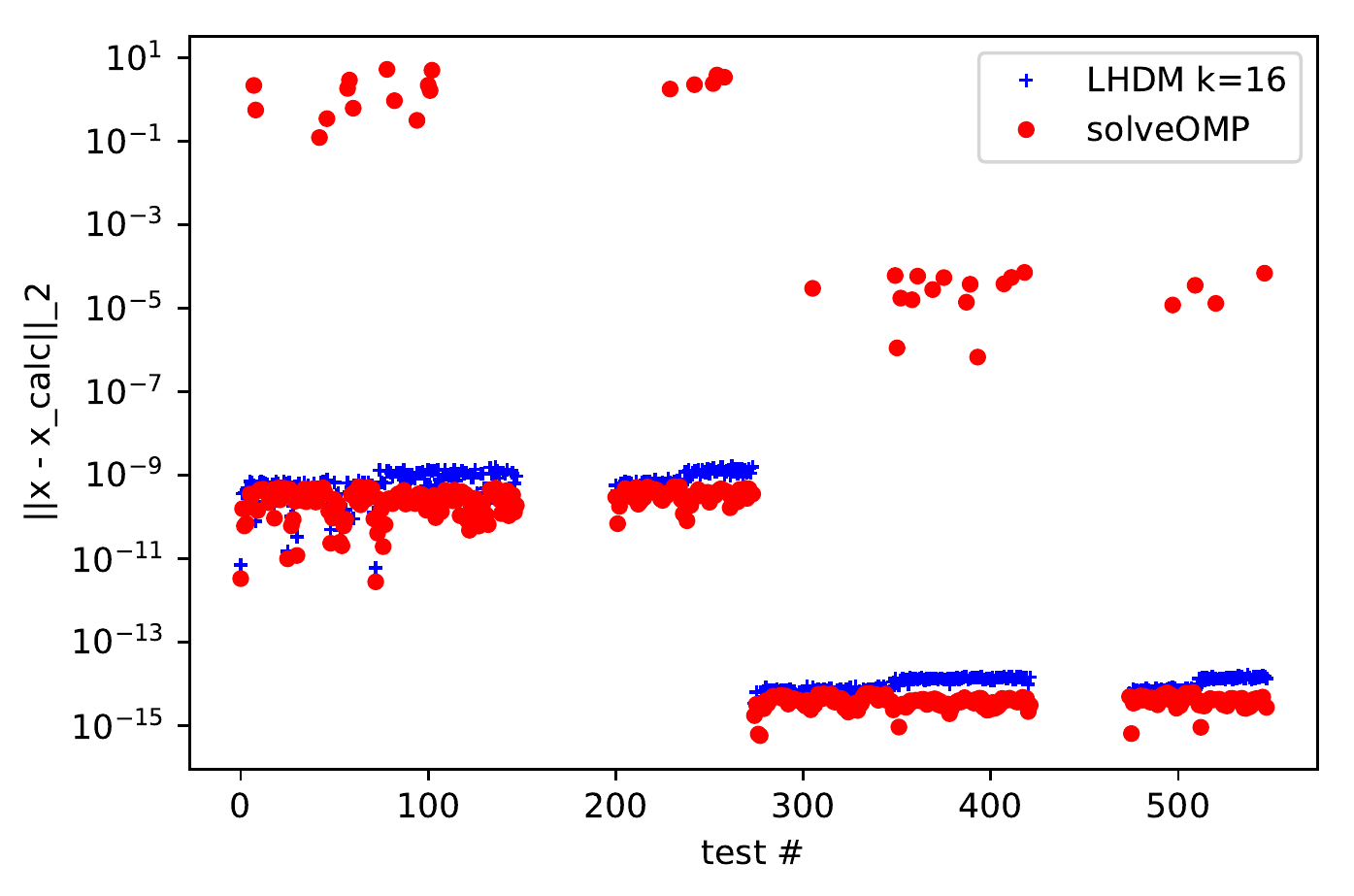}
    \end{tabular}
      \caption{Execution times (left), speedup of LHDM vs SolveOMP (center) and distance to optimum $|| \mathbf{x} - \mathbf{x}_{calc} ||_2$ (right), for the modified dataset, whose instances have almost dense solution vectors. The vector $\mathbf{x}_{calc}$ is the solution computed by the algorithm and $\mathbf{x}$ the true solution (unique, in ERC tests).} 
      \label{fig:results_4}
\end{figure}

This observation suggests us to try a modified dataset along the second direction: the matrix condition number.
The matrices of the original dataset in \cite{AAVV_Solving_Basis_Pursuit-Heuristic_Optimality_Check_and_Solver_Comparison} are very well conditioned (condition number $< 10^2$). 
We created a few modified datasets, starting from the original, with random generated matrices at prescribed condition numbers, and then perturbing the resulting matrices to satisfy the ERC property at each instance.
If the matrix condition number is moderate, say between $10^5$ and $10^6$, SolveOMP fails $\approx 70\%$ of instances, while LHDM solves all of them. Moreover, as we can notice in Figure \ref{fig:results_6}, LHDM keeps its performance gain vs LH, independently from the conditioning of the problem (compare with Figure \ref{fig:results_2}).

\begin{figure}[H]
\centering
    \begin{tabular}{ccc}
      \includegraphics[height=0.22\textwidth]{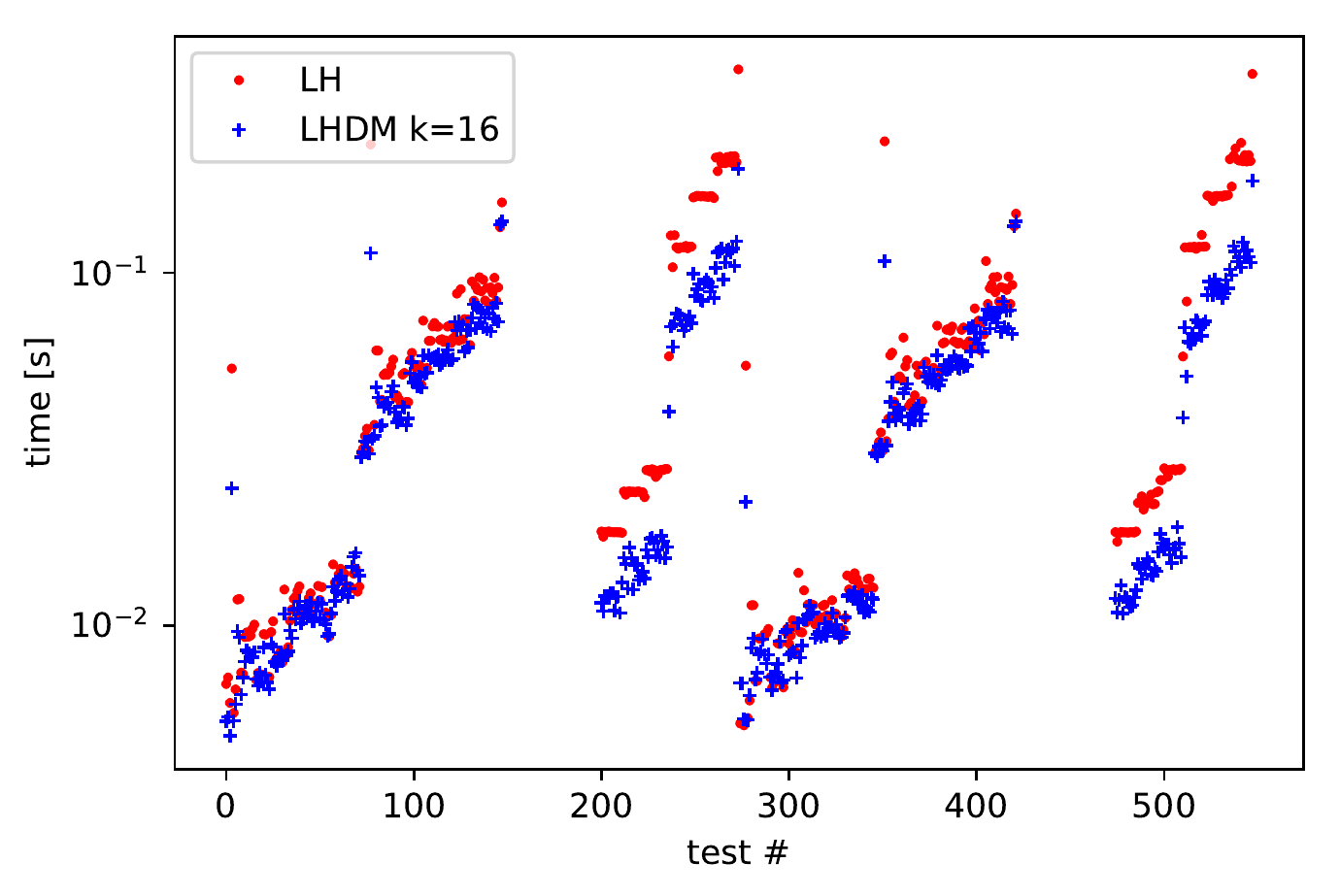}&
      \includegraphics[height=0.22\textwidth]{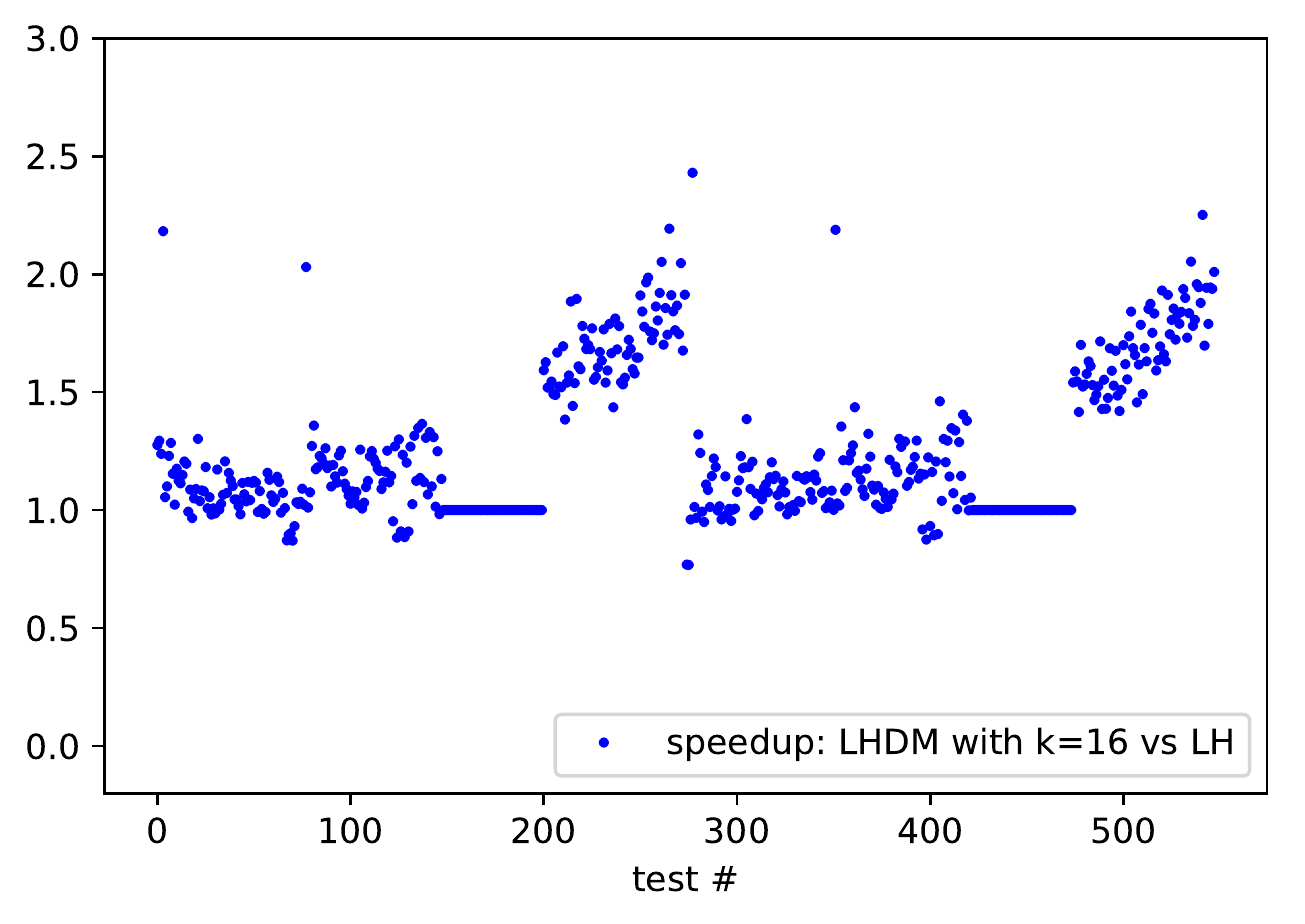}&
      \includegraphics[height=0.22\textwidth]{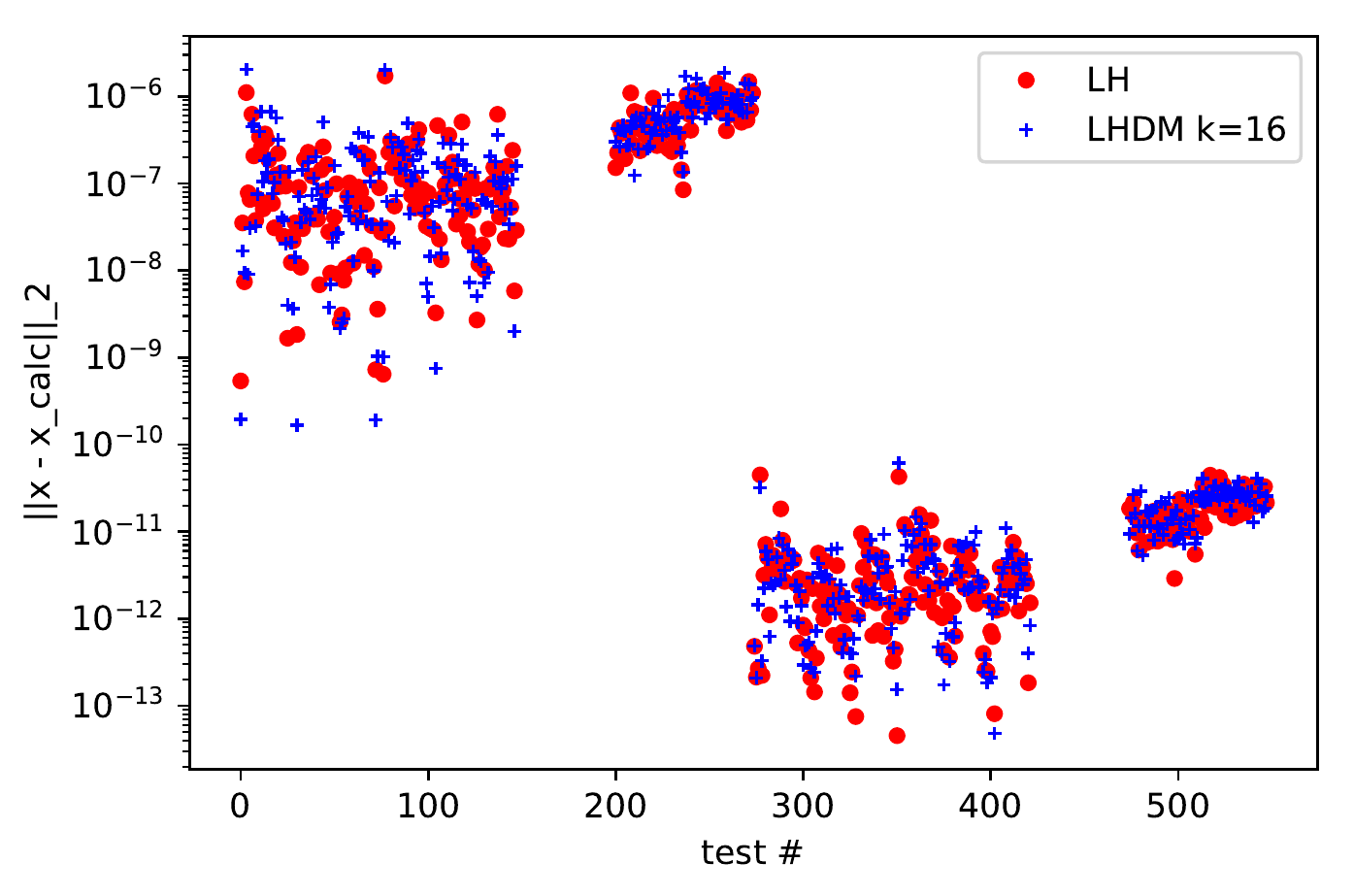}
    \end{tabular}
      \caption{Execution times (left), speedup of LHDM vs LH (center) and distance to optimum $|| \mathbf{x} - \mathbf{x}_{calc} ||_2$ (right), for the modified dataset, whose matrices have a moderate condition number (between $10^5$ and $10^6$). The vector $\mathbf{x}_{calc}$ is the solution computed by the algorithm and $\mathbf{x}$ the true solution (unique, in ERC tests).} 
    \label{fig:results_6}
\end{figure}


The third direction in which we modified the database is the nonnegativity of the solution. 
We created one modified dataset, starting from the original, by taking componentwise the absolute value of each solution vector, and then perturbing the matrices to satisfy the ERC property at each instance.
When a nonnegative solution is requested, SolveOMP does not satisfy this requirement, while LHDM does.
Moreover, as we can notice in Figure \ref{fig:results_7}, LHDM keeps its performance gain vs LH.

\begin{figure}[H]
\centering
    \begin{tabular}{ccc}
      \includegraphics[height=0.22\textwidth]{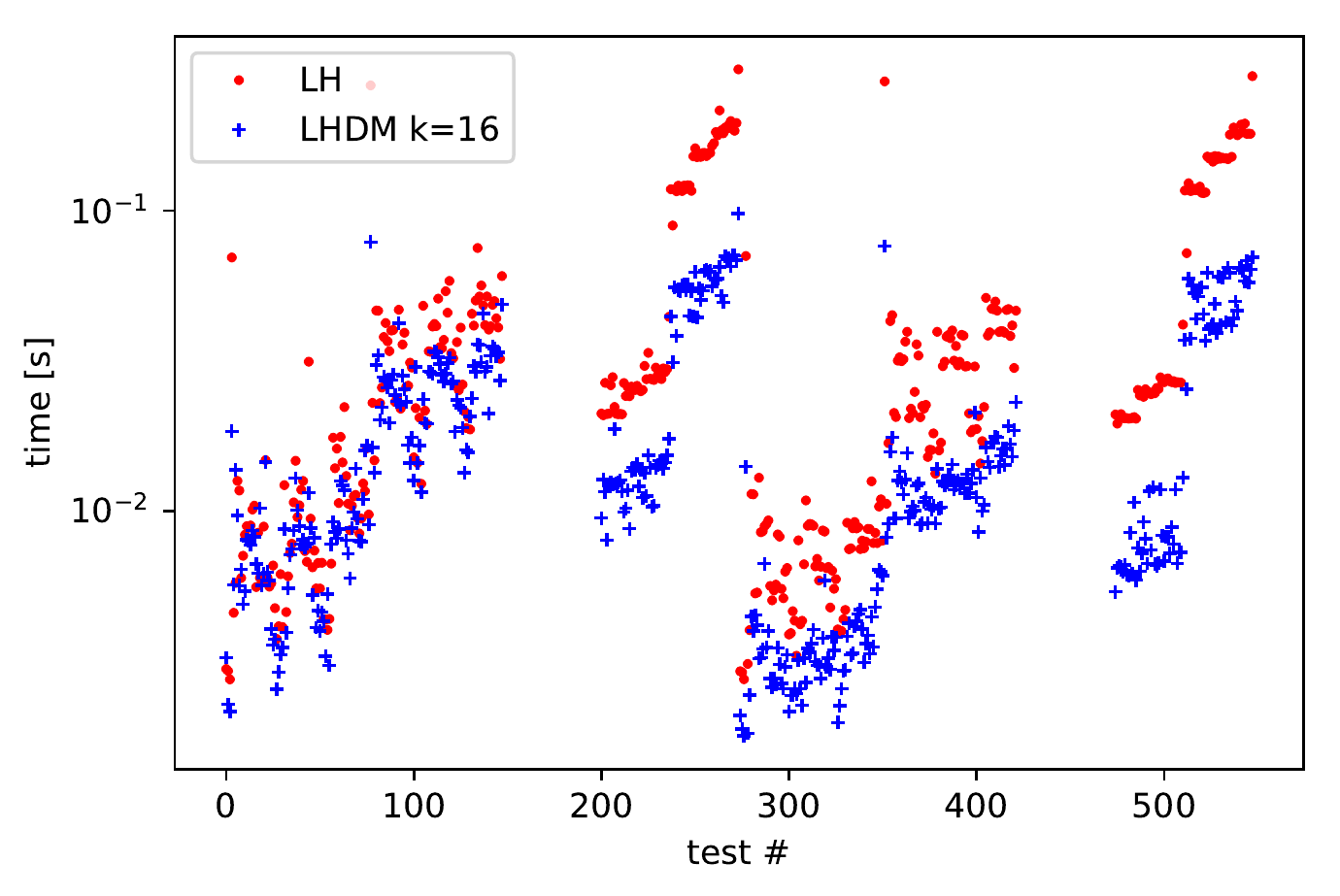}&
      \includegraphics[height=0.22\textwidth]{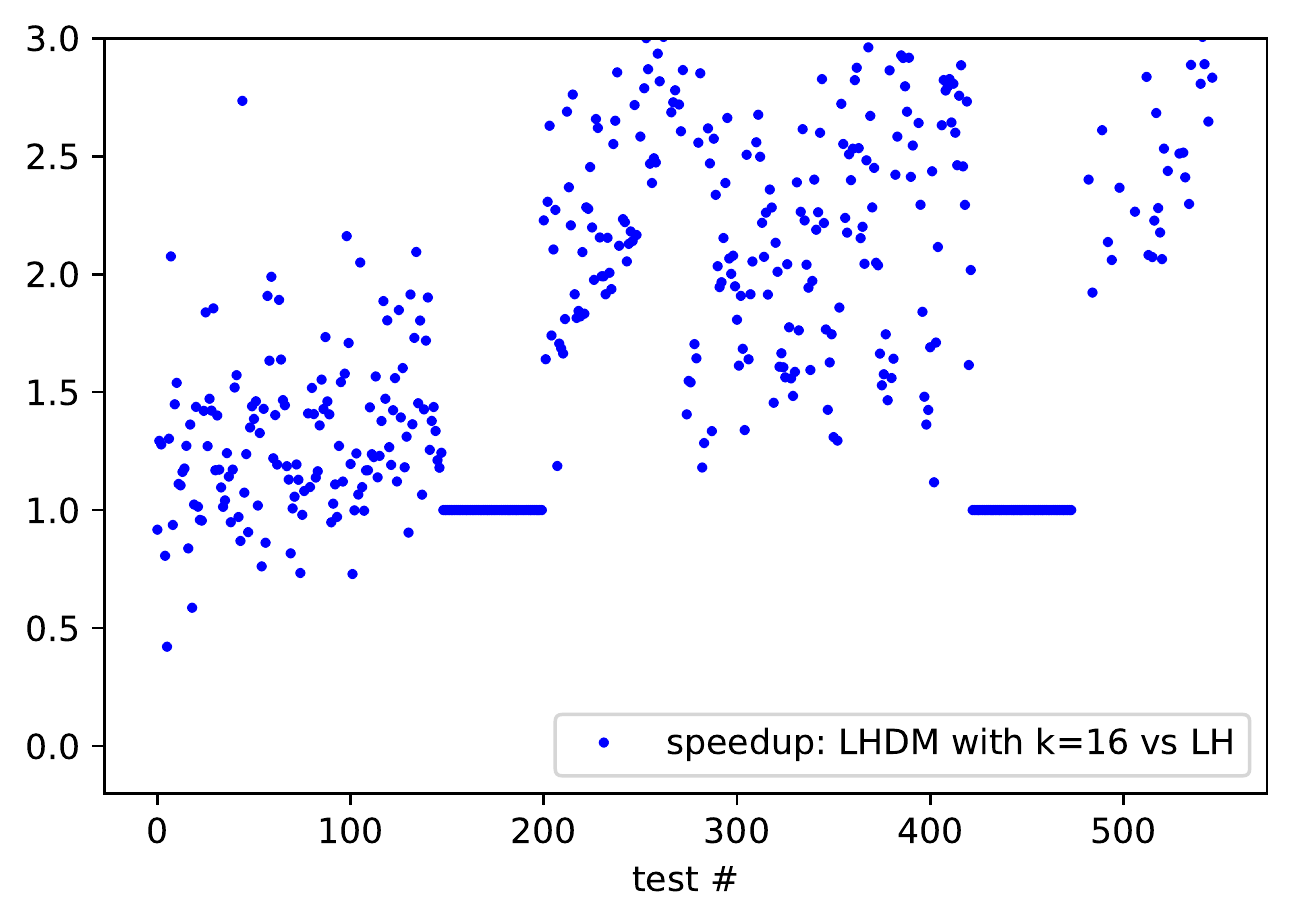}&
      \includegraphics[height=0.22\textwidth]{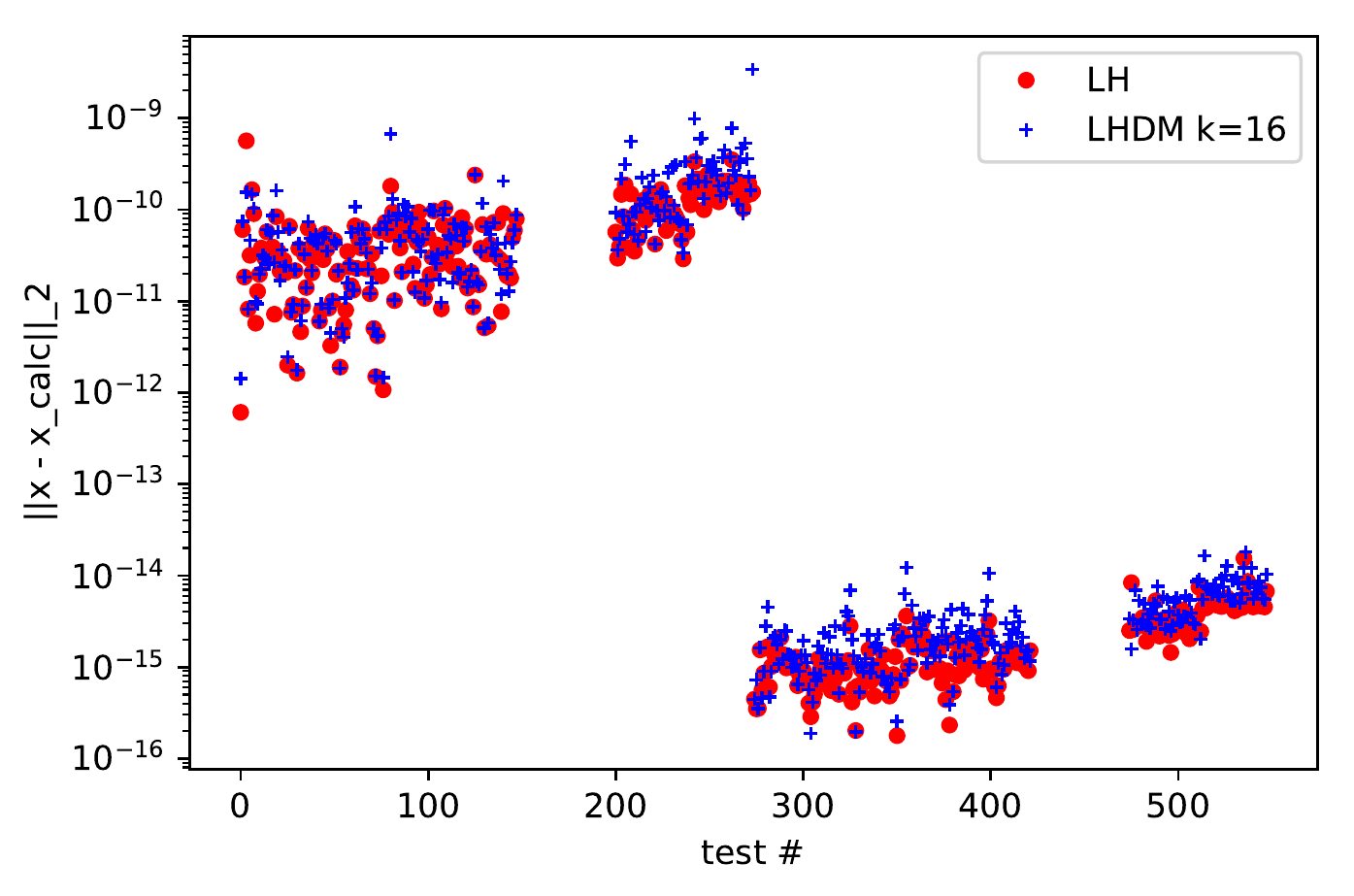}
    \end{tabular}
      \caption{Execution times (left), speedup of LHDM vs LH (center) and distance to optimum $|| \mathbf{x} - \mathbf{x}_{calc} ||_2$ (right), for the modified dataset, whose solution vectors are componentwise the absolute value of each solution vector of the original dataset. The vector $\mathbf{x}_{calc}$ is the solution computed by the algorithm and $\mathbf{x}$ the true solution (unique, in ERC tests).} 
    \label{fig:results_7}
\end{figure}

\section{Conclusions} \label{sec:conclusions}
In this work we have presented a new NNLS solver, namely the Lawson-Hanson with Deviation Maximization algorithm (LHDM), based on the “deviation maximization” columns selection strategy. Our method relies on cosine evaluation in order to select a subset of sufficiently linearly independent columns to compute a feasible descend direction and we have used it as column pivoting strategy to devise a new active set method. We have provided a theoretical analysis proving the finite convergence of LHDM, which is shown to terminate in at most $2^n$ steps, just like the standard Lawson-Hanson. This is a worst case bound, and in practice a polynomial rate of convergence is observed.

We have given also the details for an efficient implementation of LHDM, through Householder and Givens orthogonal transformations. In particular, the algorithmic clue given for the positivity-trick case in sec. \ref{sec:positivity-trick} is by itself a novel contribution, as far as we know.

Extensive numerical experiments have been carried out over a wide set of instances, confirming that LHDM yields a significant performance gain over LH with an average speedup of $3\times$ with peaks up to $6\times$. Moreover, we compared it with several $\ell_1$ solvers whose implementation is publicly available online. Numerical testing has confirmed that LHDM is competitive with $\ell_1$ solvers for sparse recovery in terms of solution quality and execution times, revealing it as a good choice for sparse recovery for a wide class of instances.

The proposed LHDM algorithm exploits BLAS-3 operations for efficiency. A thorough experimental evidence is not possible in this Matlab implementation and it requires an implementation in a compiled programming language, e.g. the C language, which is an ongoing project, for an open-source delivery. With a C implementation, the performances seen in sec. \ref{sec:numerical_experiments} should sensibly improve, like we did experience with QRDM \cite{DMa21}, included the speedup with respect to LH.

We did not cope with sparse instances, since it would require a dedicated implementation based on sparse QR with column pivoting and would be interesting to compare with randomization techniques. Finally, it would be also interesting to extend the deviation maximization as pivoting strategy to other problems which require column selection, e.g. more general constrained optimization problems such as least squares problems with linear inequality constraints.

\section*{Acknowledgments}
The authors gratefully acknowledge the doctoral grant funded by BeanTech s.r.l. ``GPU computing for modeling, nonlinear optimization and machine learning". 
This work was partially supported by the Project BIRD192932 of the University of Padova.

\bibliography{LHDM_revised_preprint}
\end{document}